\newtheorem{thm}{Theorem}[section]
\newtheorem{cor}[thm]{Corollary}
\newtheorem{lemma}[thm]{Lemma}
\newtheorem{prop}[thm]{Proposition}
\newtheorem{conjdefn}[thm]{Conjectural definition}
\newtheorem{conj}[thm]{Conjecture}
\theoremstyle{remark}
\theoremstyle{definition}
\newtheorem{defn}[thm]{Definition}
\newtheorem{rmk}[thm]{Remark}
\newtheorem{exa}[thm]{Example}
\numberwithin{equation}{thm}
\def\beq{\begin{equation}}
\def\eeq{\end{equation}}
\def\crash#1{}
\def\N{{\mathbb N}}
\def\Z{{\mathbb Z}}
\def\Q{{\mathbb Q}}
\def\R{{\mathbb R}}
\def\C{{\mathbb C}}
\def\l{\left}
\def\r{\right}
\def\[[{\l[\l[}
\def\]]{\r]\r]}
\def\p{\prime}
\def\sgq{\sigma_q}
\def\sgp{\sigma_p}
\def\Sgq{\Sigma_q}
\def\dq{d_q}
\def\Cup{\mathop\cup}
\def\ord{{\rm ord}}
\def\cf{\emph{cf. }}
\def\ie{\emph{i.e. }}
\def\ds{\displaystyle}
\def\cA{{\mathcal A}}
\def\cB{{\mathcal B}}
\def\cC{{\mathcal C}}
\def\cF{{\mathcal F}}
\def\cM{{\mathcal M}}
\def\cN{{\mathcal N}}
\def\cL{{\mathcal L}}
\def\cO{{\mathcal O}}
\def\cP{{\mathcal P}}
\def\cU{{\mathcal U}}
\def\cY{{\mathcal Y}}
\def\cZ{{\mathcal Z}}
\def\wtilde{\widetilde}
\def\ul{\underline}
\def\ol{\overline}
\def\veps{\varepsilon}
\def\a{\alpha}
\def\sg{\sigma}
\def\la{\lambda}
\def\La{\Lambda}
\def\Sg{\Sigma}
\def\De{\Delta}
\def\lgp{\log^+}
\def\deg{\mathop{\rm deg}}
\def\adj{\mathop{\rm adj}}
\author{Lucia Di Vizio}
\thanks{{Institut de Math\'{e}matiques de Jussieu,
Topologie et g\'{e}om\'{e}trie alg\'{e}briques,}
{Case 7012, 2, place Jussieu, 75251 Paris Cedex 05, France.}
{e-mail: {\tt divizio@math.jussieu.fr}.}
\hfill\break
Work partially supported by ANR, contract ANR-06-JCJC-0028}
\date{\today}
\title[Arithmetic theory of $q$-difference equations]
{Arithmetic theory of $q$-difference equations\\~\\
($G_q$-functions and $q$-difference modules of type $G$, \\
global $q$-Gevrey series)}
\begin{document}
\maketitle
\bibliographystyle{amsalpha}

\begin{abstract}
In the first part of the paper we give a definition of $G_q$-function and
we establish a regularity result, obtained as a combination of a
$q$-analogue of the André-Chudnovsky Theorem \cite[VI]{AGfunctions} and Katz Theorem
\cite[\S13]{KatzTurrittin}. In the second part of the paper, we combine it with some formal
$q$-analogous Fourier transformations, obtaining a statement on the
irrationality of special values of the formal $q$-Borel transformation of
a $G_q$-function.
\end{abstract}

\setcounter{tocdepth}{1}
\tableofcontents

\section{Introduction}

A $G$-function, notion introduced by C.L. Siegel in 1929,
is a formal power series $y=\sum_{n\geq 0}y_n x^n$ with coefficients in the
field of algebraic numbers $\ol\Q$, such that:
\begin{enumerate}
\item
the series $y$ is solution of a linear differential equation with coefficients in $\ol\Q(x)$
(condition that actually ensures that the coefficients of $y$ are contained in a number field $K$);
\item
there exist a sequence of positive numbers $N_n\in\N$ and a positive constant $C$ such that
$N_n y_s$ is an integer of $K$ for any $0\leq s\leq n$ and  $N_n\leq C^n$;
\item
for any immersion $K\hookrightarrow \C$, the image of $y$ in $\C[[x]]$ is a convergent power series for the
usual norm.
\end{enumerate}
Roughly speaking, a $G$-module is a, \emph{a posteriori} fuchsien,
$K(x)/K$-differential module whose (uniform part of)
solutions are $G$-functions (\cf \cite{Bombieri}, \cite{CC1}, \cite{AGfunctions}, \cite{DGS}).
More formally, if $Y^\p(x)=G(x)Y(x)$ is the differential system associate with such a connection in
a given basis, one can iterate it obtaining a family of the higher order differential systems
$\frac1{n!}\frac{d^nY}{dx^n}(x)=G_{[n]}(x)Y(x)$. Our differential module is of type $G$
if there exist a constant $C>0$ and a sequence of polynomials $P_n(x)\in\Z[x]$,
such that
\begin{enumerate}
\item
$P_n(x)G_{[s]}(x)$
is a matrix whose entries are polynomials with coefficients in the ring
of integers of $K$, for any $s=1,\dots,n$;
\item
the absolute value of the coefficients of $P_n(x)$ is smaller that $C^n$.
\end{enumerate}
The unsolved Bombieri-Dwork's conjecture says that $G$-modules \emph{come from geometry},
in the sense that they are extensions of direct summands of Gauss-Manin connections:
the precise conjecture is stated in \cite[II]{AGfunctions}.
Y. André proves that
a differential module coming from geometry is of type $G$ (\cf \cite[V, App.]{AGfunctions}).
More recently, the theory of $G$-functions has been the starting point for the papers \cite{andreannalsI} and
\cite{andreannalsII}, where the author develops an arithmetic theory of Gevrey series,
allowing for a new approach to some diophantine results, such as the Schidlovskii's theorem.
\par
The question of the existence of an arithmetic theory of $q$-difference equations was first asked in
\cite{andreannalsII}.
A naive analogue over a number field of the notion above clearly does not work.
In fact, let $K$ be a number field and let $q\in K$, $q\neq 0$, not be a root of unity.
We consider formal power series $y\in K[[x]]$ that satisfies conditions 2 and 3 of the definition of
$G$-function given above and that is solution of a nontrivial $q$-difference equation
with coefficients in $K(x)$, \ie:
$$
a_\nu(x)y(q^\nu x)+a_{\nu-1}y(q^{nu-1}x)+\dots+a_0(x)y(x)=0\,,
$$
with $a_0(x),\dots,a_\nu(x)\in K(x)$, not all zero.
Then the following result by Y. André is the key point of \cite{DVInv}:

\begin{prop}[{\cite[8.4.1]{DVInv}}]
A series $y$ as above is the Taylor expansion at $0$ of a rational function in $K(x)$.
\end{prop}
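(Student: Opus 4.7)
The plan is to apply a global rationality criterion of Borel--Dwork--Amice type, using the fact that $q$ is not a root of unity to single out a place of $K$ at which the $q$-difference equation forces meromorphic extension to the whole base.

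Since $q\in K^*$ is algebraic and not a root of unity, the product formula forces the existence of at least one place $v_0$ of $K$ (archimedean or not) with $|q|_{v_0}\neq 1$; after the change of variable $x\mapsto q^{-\nu}x$ (which just reindexes the coefficients of the equation), we may assume $|q|_{v_0}<1$. Conditions (2) and (3) of the definition of a $G$-function then supply a positive radius of convergence for $y$ at every archimedean place, and a uniform lower bound $\geq 1/C$ for the $v$-adic radius of convergence at every non-archimedean place (the latter comes from the estimate $|y_n|_v\leq |N_n|_v^{-1}\leq N_n\leq C^n$). In particular, $y$ is $v_0$-adically analytic on some disk of positive radius.

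The core step is to promote this $v_0$-adic analyticity to meromorphy on the whole completed algebraic closure of $K_{v_0}$, using the $q$-difference equation and $|q|_{v_0}<1$. Rewriting the equation as
\[
y(x)=-\frac{1}{a_0(x)}\bigl(a_1(x)y(qx)+\dots+a_\nu(x)y(q^\nu x)\bigr),
\]
one observes that if $y$ is already known to be $v_0$-adically meromorphic on $\{|x|_{v_0}<R\}$, the right-hand side is meromorphic on the strictly larger disk $\{|x|_{v_0}<R/|q|_{v_0}\}$; hence $y$ itself extends meromorphically to that disk, the only new poles being (shifts of) zeros of $a_0$. Iterating $N\to\infty$ times, the radius of meromorphy tends to infinity, so $y$ is $v_0$-adically meromorphic on the entire completed algebraic closure of $K_{v_0}$, with poles confined to the $q$-orbits of the finitely many zeros of $a_0(x)$.

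Combining positive archimedean radii of convergence, positive non-archimedean radii at every finite place, and (essentially) infinite meromorphic radius at $v_0$, the series $y$ satisfies the hypotheses of a global rationality criterion of Borel--Dwork--Amice--Bertrandias--Robba type, forcing $y\in K(x)$. The main obstacle I anticipate is the quantitative bookkeeping of the pole divisor produced by the backward iteration at $v_0$: one must show that the orders of poles of $y$ on $v_0$-adic disks grow at most polynomially with the radius, so that the meromorphic data at $v_0$ actually fulfill the hypotheses of the rationality criterion. A secondary subtlety is the uniform treatment of other places $v$ with $|q|_v\neq 1$: the same extension mechanism applies there and, balanced against condition (2), must be shown to be compatible with the product inequality underlying the criterion.
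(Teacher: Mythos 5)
The paper does not reproduce a proof here but merely cites \cite[8.4.1]{DVInv}, so I cannot compare against a proof that appears in this source; however, your strategy (meromorphic continuation at a place where $|q|_{v_0}\neq 1$, combined with a Borel--Dwork/P\'olya--Bertrandias rationality criterion) is the standard one for this statement and is, to my knowledge, the one used in the cited reference. Two small corrections to your sketch. First, the existence of a place with $|q|_{v_0}\neq 1$ does not follow from the product formula alone (which is trivially satisfied by $|q|_v=1$ for all $v$): it follows from Kronecker's theorem, namely that an algebraic number which is a unit at every finite place and has all archimedean absolute values equal to $1$ is a root of unity, and $q$ is assumed not to be one. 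Second, the pole bookkeeping at $v_0$ is actually easier than you anticipate: since $a_0$ has only finitely many zeros, each backward $q$-orbit $\{q^{-n}\xi\}_{n\geq 0}$ meets the zero divisor of $a_0$ at most finitely many times, so after finitely many steps of the recursion $y(x)=-a_0(x)^{-1}\sum_{j\geq 1}a_j(x)y(q^jx)$ the pole order along any given orbit \emph{stabilizes}; thus pole orders are uniformly bounded, and the number of poles in a disk of $v_0$-radius $R$ grows only like $\log R$, which is what the meromorphic rationality criterion requires. With these two adjustments the argument is sound.
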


Other unsuccessful suggestions for a $q$-analogue of a $G$-function are made in
\cite[App.]{DVTesi}. These considerations may induce to conclude that
$q$-difference equations do not come from geometry over $\ol\Q$.
\par
Here we propose another approach: we consider a finite extension $K$
of the field of
rational function $k(q)$ in $q$ with coefficients in a field $k$.
This is a very natural approach since in the literature, $q$ very often considered as a parameter.
Since $K$ is a global field, we can define a $G_q$-function to be a series in $K[[x]]$, solution
of a $q$-difference equation with coefficients in $K(x)$,
satisfying a straightforward analogue of conditions
2 and 3 of the definition above. As far as the definition of $q$-difference modules of type $G$
is concerned
only the places of $K$ modulo whom $q$ is a root of unity - that we will briefly call cyclotomic places -
comes into the picture (\cf Proposition \ref{prop:cyclotomicplaces} below).
In fact, consider a $q$-difference system
\beq\label{eq:eqintro}
Y(qx)=A(x)Y(x)\,,
\eeq
with $A(x)\in Gl_\nu(K(x))$: its solutions can be interpreted as the horizontal
vectors of a $K(x)$-free module $M$ of rank $\nu$ with respect to a semilinear
bijective operator $\Sgq$ verifying $\Sgq(f(x)m)=f(qx)\Sgq(m)$ for any $f(x)\in K(x)$ and any $m\in M$.
We consider the $q$-derivation:
$$
\dq(f(x))=\frac{f(qx)-f(x)}{(q-1)x}
$$
and its iterations:
$$
\frac{\dq^n}{[n]_q^!}\,,
\hbox{~with $[0]_q^!=[1]_q^!=1$ and $[n]_q^!=\frac{q^n-1}{q-1}[n-1]_q^!$.}
$$
We can obtain from \eqref{eq:eqintro} a whole family of systems:
$$
\frac{\dq^n}{[n]_q^!}Y(x)=G_{[n]}(x)Y(x)\,,
$$
where $G_1(x)=\frac{A(x)-1}{(q-1)x}$ and
$\frac{q^n-1}{q-1}G_{[n]}(x)=G_{[1]}(x)G_{[n-1]}(qx)+\dq G_{[n-1]}(x)$.
The fact that the denominators $[n]_q^!$ of the iterated derivations
$\frac{\dq^n}{[n]_q^!}$ have positive valuation only at the cyclotomic
places has the consequences that ``there is no arithmetic growth''
at the noncyclotomic places (\cf \S\ref{sec:noncyclotomic} below for a precise formulation).
Moreover, an important role in the proofs is played by the reduction
of $q$-difference systems modulo a cyclotomic place: this means that we
specializes $q$ to a root of unity and we study the nilpotence properties
of the obtained system. In characteristic zero, one automatically obtain
an iterative $q$-difference module, in the sense of C. Hardouin \cite{HardouinIterative}.
\par
The role played by the cyclotomic valuations, and therefore by roots of unity,
points out some analogies with other topics:
\begin{itemize}
\item
The Volume Conjecture predicts a link between the hyperbolic volume of the
complement of an hyperbolic knot and the asymptotic of the sequence
$J_n(\exp(2i\pi/n))$, where $J_n(q)$ is an invariant of the knot called
$n$-th Jones polynomial. The Jones polynomials are Laurent polynomials in $q$ such that
the generating series $\sum_{n\geq 0}J_n(q)x^n$ is solution of
a $q$-difference equations with coefficients in $\Q(q)(x)$ (\cf \cite{GarouHolonomy}):
the situation is quite similar to the one considered in the present paper.
The $q$-difference equations appearing in this topological setting
have, in general, irregular singularities, differently from the $q$-difference operators of type $G$,
that are regular singular. To involve some irregular singular operators in the present framework, one
should consider some formal $q$-Fourier transformations
and develop a global theory of $q$-Gevrey series, in the wake of \cite{andreannalsI}:
this is the topic of the second part of the paper.
\item
As already point out, an important role is played by the reduction of $q$-difference systems
modulo the cyclotomic valuations. Conjecturally,
the growth at cyclotomic places should be enough to describe the whole theory
(\cf \S\ref{sec:noncyclotomic}).
It is natural to ask whether $q$-difference equations,
that seem not to ``come from geometry over $\ol\Q$'',
may have some geometric origin, in the sense of the geometry over $\mathbb F_1$
(\cf \cite{Soulecar1}, \cite{connesconsani}).
\end{itemize}
Notice that in \cite{ManinF1}, Y. Manin establish a
link between the Habiro ring, which is a topological algebra
constructed to deal with quantum invariants of knots, and geometry over
$\mathbb F_1$, so that the two remarks above are not orthogonal.

$$
\ast\ast\ast
$$

In the present paper we give a definition of $G_q$-functions and $q$-difference modules of type $G$.
We test those definitions proving that a $q$-difference module having an injective solution whose entries
are $G$-functions is of type $G$: that is to say that ``the minimal $q$-difference module generated by a
$G$-function'' is of type $G$ (\cf Theorem \ref{thm:chudnovsky} below).
We also prove that $q$-difference module of type $G$ are
regular singular (\cf Theorem \ref{thm:regularity}). These two results are the base
for the development of a global theory of $q$-Gevrey series.
\par
In part two, we define global $q$-Gevrey series. Via the study of two $q$-analogues
the formal Fourier transformation, we establish some structure theorems for the minimal $q$-difference
equations killing global $q$-Gevrey series (\cf Theorems \ref{thm:sing}, \ref{thm:singbis} and
\ref{thm:teogev}).  We conclude with an irrationality theorem for special values of of
global $q$-Gevrey series of negative orders (\cf Theorem \ref{thm:irr}).

\medskip
This paper won't be submitted for publication since the results below can be obtained
in a more direct way.
Namely, one can prove that $G_q$-functions are all rational (\cf \cite{DivizioHardouin}).
Nevertheless,
the construction of the coefficients of the $q$-difference module from an injective solution
in the proof of Theorem \ref{thm:chudnovsky}
has an interest in itself, since it may be applied to other difference operators.

\subsection*{Acknowledgements.}
I would like to thank Y. André, J-P. Bézivin and Y. Manin for the interest
they have shown for my work.
Of course, I'm the only responsible for the deficiencies of this paper.

\part{\sc\bfseries $G_q$-functions and $q$-difference modules of type $G$}
\section{Definition and first properties}

Let us consider the field of rational function $k(q)$ with coefficients
in a fixed field $k$.
We fix $d\in(0,1)$ and for any irreducible polynomial $v=v(q)\in k[q]$
we set:
$$
|f(q)|_v=d^{\deg_q v(q)\cdot\ord_{v(q)}f(q)}\,,\,\forall f(q)\in k[q]\,.
$$
The definition of $|~|_v$ extends to $k(q)$ by multiplicativity.
To this set of norms one has to add the $q^{-1}$-adic one, defined on $k[q]$ by:
$$
|f(q)|_{q^{-1}}=d^{-deg_qf(q)}\,;
$$
once again this definition extends by multiplicativity to $k(q)$. Then the
Product Formula holds:
$$
\prod_v\l|\frac{f(q)}{g(q)}\r|_v
=d^{\sum_v\deg_q v(q)~\l(\ord_{v(q)}f(q)-\ord_{v(q)}g(q)\r)}
=d^{deg_q f(q)-\deg_q g(q)}
=\l|\frac{f(q)}{g(q)}\r|_{q^{-1}}^{-1}\,.
$$
For any finite extension $K$ of $k(q)$, we consider the family
$\cP$ of ultrametric norms, that extends the norms defined above, up to equivalence.
We suppose that the norms in $\cP$ are normalized so that the Product Formula still holds.
We consider the following partition of $\cP$:
\begin{itemize}
\item
the set $\cP_\infty$ of places of $K$ such that the associated
norms extend, up to equivalence, either $|~|_q$ or $|~|_{q^{-1}}$;

\item
the set $\cP_f$ of places of $K$ such that the associated
norms extend, up to equivalence, one of the norms $|~|_v$
for an irreducible $v=v(q)\in k[q]$, $v(q)\neq q$.
\end{itemize}
Moreover we consider the set $\cC$ of places $v\in\cP_f$
such that $v$ divides a valuation of $k(q)$ having as uniformizer
a factor of a cyclotomic polynomial. We will briefly call $v\in\cC$ a cyclotomic place.

\begin{defn}
A series $y=\sum_{n\geq 0}y_nx^n\in K[[x]]$ is a \emph{$G_q$-function}
if:
\begin{enumerate}
\item
It is solution of a $q$-difference equations with coefficients
in $K(x)$, \ie there exists $a_0(x),\dots,a_\nu(x)\in K(x)$ not all zero such that
\beq\label{eq:eqdefn}
a_0(x)y(x)+a_1(x)y(qx)+\dots+a_\nu(x)y(q^\nu x)=0\,.
\eeq
\item
The series $y$ has finite size, \ie
$$
\sg(y):=\limsup_{n\to\infty}\frac 1n\sum_{v\in\cP}\log^+
\l(\sup_{s\leq n}|y_s|_v\r)<\infty\,,
$$
where $\log^+ x=\sup(0,\log x)$.
\end{enumerate}
\end{defn}

We will refer to the invariant $\sg$ as the \emph{size}, using the same terminology
as in the classical case
of series over a number field.

\begin{rmk}
\begin{enumerate}
\item
One can show that this definition of $G_q$-function is equivalent to the one given in the
introduction (\cf \cite[I, 1.3]{AGfunctions}).
\item
Let $\ol{k(q)}$ be the algebraic closure of $k(q)$. A formal power series
with coefficients in $\ol{k(q)}$ solution of a $q$-difference equations with coefficients in
$\ol{k(q)}(x)$ is necessarily defined over a finite extension $K/k(q)$.
\end{enumerate}
\end{rmk}

\begin{prop}
The set of $G_q$-functions is stable with respect to the sum and the
Cauchy product\footnote{It may be interesting to remark, although
we won't need it in the sequel, that the estimate of the size of a product of $G$-functions proved in
\cite[I, 1.4, Lemma 2]{AGfunctions} holds also in the case of $G_q$-functions.}.
Moreover, it is independent of the choice of $K$, in the sense that we can replace
$K$ by any finite extension of $K$.
\end{prop}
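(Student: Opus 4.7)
The plan is to handle the three assertions separately: stability under addition, stability under Cauchy product, and invariance under enlargement of $K$. For the first two, one needs both the existence of a linear $q$-difference equation killing the new series and the finiteness of its size.

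For the equations, let $V_y$ (resp.\ $V_z$) be the $K(x)$-subspace of $K((x))$ generated by $\{\sgq^k y:k\geq 0\}$ (resp.\ $\{\sgq^k z\}$); both are finite-dimensional and $\sgq$-stable by hypothesis. Then $V_y+V_z$ is a finite-dimensional $\sgq$-stable subspace containing $y+z$, so the family $\{\sgq^k(y+z)\}_k$ is $K(x)$-linearly dependent and yields a non-trivial $q$-difference equation for $y+z$. Similarly, the $K(x)$-span of the products $\sgq^i y\cdot\sgq^j z$ is finite-dimensional, $\sgq$-stable, and contains $yz$.

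For the sizes, all places in $\cP$ are ultrametric, so for each $v\in\cP$ one has
$$
|(y+z)_n|_v\leq\max(|y_n|_v,|z_n|_v)\quad\text{and}\quad|(yz)_n|_v\leq\sup_{s\leq n}|y_s|_v\cdot\sup_{s\leq n}|z_s|_v.
$$
Taking $\sup_{s\leq n}$ on the left and applying $\lgp$, together with the subadditivity $\lgp(ab)\leq\lgp a+\lgp b$ for $a,b\geq 0$, gives the bound $\lgp\sup_{s\leq n}|(y+z)_s|_v\leq\lgp\sup_{s\leq n}|y_s|_v+\lgp\sup_{s\leq n}|z_s|_v$ and the analogous estimate for $yz$. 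Summing over $v$ and dividing by $n$ yields $\sg(y+z),\sg(yz)\leq\sg(y)+\sg(z)<\infty$.

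For the independence of $K$, let $L/K$ be finite of degree $d$. Every equation over $K(x)$ is \emph{a fortiori} an equation over $L(x)$. To descend a relation $\sum_i a_i(x)\sgq^i y=0$ with $a_i\in L(x)$, decompose each $a_i=\sum_j a_{i,j}\omega_j$ in a $K$-basis $\omega_1,\dots,\omega_d$ of $L$ and use that $\omega_1,\dots,\omega_d$ remain $K((x))$-linearly independent in $L((x))$ to conclude that each $\sum_i a_{i,j}\sgq^i y$ vanishes, producing a non-trivial equation over $K(x)$. For the size, each $v\in\cP$ on $K$ has finitely many extensions $w$ to $L$; the normalization required for the product formula to hold on both $K$ and $L$ forces $|x|_w=|x|_v^{[L_w:K_v]/d}$ for $x\in K$, and $\sum_{w\mid v}[L_w:K_v]=d$. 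Hence for $y\in K[[x]]$,
$$
\sum_{w\mid v}\lgp\sup_{s\leq n}|y_s|_w=\sum_{w\mid v}\frac{[L_w:K_v]}{d}\lgp\sup_{s\leq n}|y_s|_v=\lgp\sup_{s\leq n}|y_s|_v,
$$
and summing over $v$ gives $\sg_L(y)=\sg_K(y)$. The only genuinely non-routine ingredient is fixing the normalization of the absolute values on a finite extension in the way that makes the identity $\sg_L(y)=\sg_K(y)$ an equality rather than just an equivalence up to a multiplicative constant.
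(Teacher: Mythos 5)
Your proof is correct and follows essentially the same route as the classical argument in Andr\'e's \emph{$G$-functions and geometry} that the paper simply cites without detail: closure under $\sgq$-stable finite-dimensional subspaces for the $q$-holonomy, ultrametric bounds for the size, and the standard product-formula normalization for descent along $K\hookrightarrow L$. The only adaptation needed---and you handle it---is that $\cP$ has no archimedean places, which in fact makes the size estimates slightly simpler than in the number-field case, and your remark about pinning down the normalization so that $\sg_L=\sg_K$ exactly (rather than up to a factor $[L:K]$) correctly identifies the one point that deserves care.
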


\begin{proof}
The proof is the same as in the case of classical $G$-functions
(\cf \cite[I, 1.4, Lemma 2]{AGfunctions}).
\end{proof}

The field $K(x)$ is naturally a $q$-difference algebra, \ie is equipped with the
operator
$$
\begin{array}{rccc}
\sgq:&K(x)&\longrightarrow&K(x)\\
&f(x)&\longmapsto&f(qx)
\end{array}\,.
$$
The field $K(x)$ is also equipped with the $q$-derivation
$$
\dq(f)(x)=\frac{f(qx)-f(x)}{(q-1)x}\,,
$$
satisfying a $q$-Leibniz formula:
$$
\dq(fg)(x)=f(qx)\dq(g)(x)+\dq(f)(x)g(x)\,,
$$
for any $f,g\in K(x)$.
A $q$-difference module over $K(x)$ (of rank $\nu$)
is a finite dimensional $K(x)$-vector space $M$ (of dimension $\nu$)
equipped with an invertible $\sgq$-semilinear operator, \ie
$$
\Sgq(f(x)m)=f(qx)\Sgq(m)\,,
$$
for any $f\in K(x)$ and $m\in M$.
A morphism of $q$-difference modules over $K(x)$ is a morphisms
of $K(x)$-vector spaces, commuting to the $q$-difference structure
(for more generalities on the topic, \cf \cite{vdPutSingerDifference},
\cite[Part I]{DVInv}
or \cite{gazette}).
\par
Let $\cM=(M,\Sgq)$ be a $q$-difference module over $K(x)$ of rank $\nu$.
We fix a basis $\ul e$ of $M$ over $K(x)$ and we set:
$$
\Sgq\ul e=\ul e A(x)\,,
$$
with $A(x)\in Gl_\nu(K(x))$.
An horizontal vector $\vec y\in K(x)^\nu$ with respect to $\Sgq$ is a vector
that verifies $\vec y(x)=A(x)\vec y(qx)$. Therefore we call
$$
Y(qx)=A_1(x)Y(x)\,,
\hbox{~with~}A_1(x)=A(x)^{-1}\,,
$$
the system associated to $\cM$ with respect to the basis $\ul e$.
Recursively we obtain the families of $q$-difference systems:
$$
Y(q^nx)=A_n(x)Y(x)
\hbox{~and~}\dq^nY(x)=G_n(x)Y(x)\,,
$$
with $A_n(x)\in Gl_\nu(K(x))$
and $G_n(x)\in M_\nu(K(x))$. Notice that:
$$
A_{n+1}(x)=A_n(qx)A_1(x)\,,\,
G_1(x)=\frac{A_1(x)-1}{(q-1)x}
\hbox{~and~}
G_{n+1}(x)=G_n(qx)G_1(x)+\dq G_n(x)\,.
$$
It is convenient to set $A_0=G_0=1$.
Moreover we set $[n]_q=\frac{q^n-1}{q-1}$ for any $n\geq 1$,
$[n]_q^!=[n]_q[n-1]_q\cdots[1]_q$, $[0]_q^!=1$ and $G_{[n]}(x)=\frac{G_n(x)}{[n]_q^!}$.

\begin{defn}
A $q$-difference module over $K(x)$ is said to be of \emph{type $G$
(or a $G$-$q$-difference module)}
if the following \emph{global $q$-Galo\v ckin condition} is verified:
$$
\sg_\cC^q(\cM)=\limsup_{n\to\infty}\frac 1n\sum_{v\in\cC}\log^+
\l(\sup_{s\leq n}\l|G_{[s]}\r|_{v,Gauss}\r)<\infty\,,
$$
where
$$
\l|\frac{\sum{a_i}x^i}{\sum{b_j}x^j}\r|_{v,Gauss}=\frac{\sup|a_i|_v}{\sup|b_j|_v}\,,
$$
for all $\frac{\sum{a_i}x^i}{\sum{b_j}x^j}\in K(x)$.
\end{defn}

\begin{rmk}
Notice that the definition of $G$-$q$-difference module involves only the cyclotomic places.
\end{rmk}

\begin{prop}
The definition of $G_q$-module is independent on the choice of the basis and is stable by
extension of scalars to $K^\p(x)$, for a finite extension $K^\p$ of $K$.
\end{prop}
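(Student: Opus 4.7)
The plan is to establish the two independence statements separately, both via direct computation with the $q$-Leibniz rule and Gauss-norm estimates.

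For independence of the basis, let $\ul e^\p=\ul e P$ with $P\in Gl_\nu(K(x))$, and denote by $G_{[n]}$ and $G^\p_{[n]}$ the matrices of $\dq^n/[n]_q^!$ in the two bases. Iterating the $q$-Leibniz rule
$$\dq^n(fg)(x)=\sum_{k=0}^n\binom{n}{k}_q\dq^{n-k}f(q^kx)\,\dq^k g(x),$$
and using that, in coordinates, horizontal vectors satisfy $Y=PY^\p$, one derives the transformation formula
$$G^\p_{[n]}(x)=\sum_{j=0}^n(P^{-1})_{[j]}(q^{n-j}x)\,G_{[n-j]}(x)\,P(x),\qquad (P^{-1})_{[j]}:=\frac{\dq^j(P^{-1})}{[j]_q^!}.$$
Since $|\cdot|_{v,Gauss}$ is multiplicative and $\sgq$-invariant, this gives for each $v\in\cC$
$$\sup_{s\leq n}|G^\p_{[s]}|_{v,Gauss}\leq(n+1)\,|P|_{v,Gauss}\,\max_{j\leq n}|(P^{-1})_{[j]}|_{v,Gauss}\,\sup_{s\leq n}|G_{[s]}|_{v,Gauss}.$$
The decisive input is the existence of constants $C_v$ with $C_v=1$ for almost all $v$ and $|(P^{-1})_{[j]}|_{v,Gauss}\leq C_v^j$ for every $j$, a $q$-analogue of the fact that the Taylor coefficients of a rational function have geometric growth, to be obtained by writing $P^{-1}$ over a common denominator and making explicit the cancellations between $[j]_q^!$ and the factors appearing in $\dq^j(P^{-1})$. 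Taking $\log^+$, summing over $v\in\cC$ and dividing by $n$, the constant contributions of $P$, of $P^{-1}$ and the $\log(n+1)$ term all vanish in the $\limsup$, so $\sg_\cC^q(\cM,\ul e^\p)\leq\sg_\cC^q(\cM,\ul e)+O(1)$; swapping $P$ and $P^{-1}$ gives the converse inequality.

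For stability under finite extension, let $K^\p/K$ be finite. Every place $w\in\cP^\p$ restricts to a unique $v\in\cP$, and the product-formula compatible normalizations of $\cP^\p$ are chosen so that $\sum_{w\mid v}\log|x|_w=[K^\p:K]\log|x|_v$ for $x\in K$, a relation that extends to Gauss norms of elements of $K(x)\subset K^\p(x)$. As the matrices $G_{[n]}$ have entries in $K(x)$, one deduces
$$\sum_{w\in\cC^\p}\log^+\sup_{s\leq n}|G_{[s]}|_{w,Gauss}\leq[K^\p:K]\sum_{v\in\cC}\log^+\sup_{s\leq n}|G_{[s]}|_{v,Gauss},$$
once one checks that the cyclotomic places match under the extension: a place is cyclotomic exactly when the residue class of $q$ is a root of unity, and because $q\in K$ this class is the same at $v$ and at any $w$ lying above it. Dividing by $n$ and passing to the $\limsup$ yields $\sg^q_{\cC^\p}(\cM\otimes_{K(x)}K^\p(x))\leq[K^\p:K]\sg^q_\cC(\cM)<\infty$.

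The main obstacle is the Gauss-norm estimate $|(P^{-1})_{[j]}|_{v,Gauss}\leq C_v^j$ at cyclotomic places, where $[j]_q^!$ itself has genuinely negative Gauss valuation: one must verify that this is compensated by corresponding factors in $\dq^j(P^{-1})$, keeping the $q$-Taylor coefficients of a fixed rational matrix under geometric control. The transformation formula, the $\sgq$-invariance and multiplicativity of $|\cdot|_{v,Gauss}$, and the behaviour of cyclotomic places under a finite extension are all routine.
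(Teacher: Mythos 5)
Your approach is sound and is exactly the natural adaptation of the classical argument for differential modules of type $G$; the paper's own proof of this proposition is a one-line reference to that classical theory, so there is no genuinely different ``route'' in the paper to compare against. The transformation formula you obtain from the iterated $q$-Leibniz rule, namely $G^\p_{[n]}(x)=\sum_{j=0}^n(P^{-1})_{[j]}(q^{n-j}x)\,G_{[n-j]}(x)\,P(x)$, is correct (it is what one gets from $Y^\p=P^{-1}Y$ and $\dq^k Y=G_k Y$), and the treatment of the finite extension is fine: the compatibility of product-formula normalizations and the fact that $w\in\cP^\p$ is cyclotomic if and only if the place $v$ of $K$ below it is, both hold, since they depend only on the place of $k(q)$ that $v$ and $w$ restrict to.

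The estimate you flag as the obstacle, $\l|(P^{-1})_{[j]}\r|_{v,Gauss}\leq C_v^{\,j}$ with $C_v=1$ for almost all $v\in\cP_f$, does hold, but be warned that the most tempting route is lossy: the recursion $(P^{-1})_{[j+1]}=\dq\l((P^{-1})_{[j]}\r)/[j+1]_q$, combined with Gauss's lemma, appears to lose a factor $|[j+1]_q|_v^{-1}$ at each step without seeing the cancellation, and those factors do not stay bounded at cyclotomic $v$. A clean way to close the gap is the following (note that you may freely invoke the finite-extension half of the proposition first, since its proof is independent): over a finite extension of $K$ write each entry of $P^{-1}$ in partial-fraction form, a polynomial plus a sum of terms $c/(1-ax)^k$, and use the closed formula $\frac{\dq^j}{[j]_q^!}\l(\frac{1}{1-ax}\r)=\frac{a^j}{\prod_{l=0}^{j}(1-aq^l x)}$, whose Gauss norm at any $v\in\cP_f$ equals $\min(1,|a|_v)^j/\max(1,|a|_v)\leq 1$. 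The higher-order poles $1/(1-ax)^k$ are then treated by iterating the $q$-Leibniz rule on the $k$-fold product (each factor again has Gauss norm $\leq 1$, and $\sgq$ preserves $|\cdot|_{v,Gauss}$ when $|q|_v=1$), while the polynomial part uses $\frac{\dq^j}{[j]_q^!}(x^m)=\binom{m}{j}_q x^{m-j}$ with $\l|\binom{m}{j}_q\r|_v\leq 1$. This gives $\l|(P^{-1})_{[j]}\r|_{v,Gauss}\leq 1$ for all $j$ at almost every $v\in\cP_f$ and a geometric bound at the finitely many remaining places, which is exactly what your $\limsup$ computation needs.
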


\begin{proof}
Once again the proof if similar to the classical theory of $G$-functions and differential modules
of type $G$.
\end{proof}

\section{Role of the ``noncyclotomic'' places}
\label{sec:noncyclotomic}

\begin{prop}\label{prop:cyclotomicplaces}
In the notation introduced above, for any $q$-difference module $\cM=(M,\Sgq)$ over $K(x)$ we have:
$$
\sg^{(q)}_{\cP_f\smallsetminus\cC}(\cM):=
\limsup_{n\to\infty}\frac 1n\sum_{v\in\cP_f\smallsetminus\cC}\log^+
\l(\sup_{s\leq n}\l|G_{[s]}\r|_v\r)<\infty\,.
$$
\end{prop}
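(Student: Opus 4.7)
The plan is to show that at each noncyclotomic place $v\in\cP_f\smallsetminus\cC$, the denominators $[n]_q^!$ in $G_{[n]}=G_n/[n]_q^!$ are $v$-adic units, so that $\l|G_{[n]}\r|_{v,Gauss}=\l|G_n\r|_{v,Gauss}$ grows at most geometrically with a rate whose sum over $v$ is finite.

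First I would check that for $v\in\cP_f\smallsetminus\cC$ one has $|q|_v=1$ and $\l|[n]_q\r|_v=\l|[n]_q^!\r|_v=1$ for every $n\ge 1$. The first is immediate from $v\in\cP_f$. For the second, noncyclotomicity of $v$ means exactly that the image of $q$ in the residue field $\kappa_v$ is not a root of unity (otherwise its minimal polynomial would divide some $\Phi_n(q)$, contradicting the definition of $\cC$). Hence $q^n-1$ and $q-1$ are $v$-adic units, so $[n]_q=(q^n-1)/(q-1)$ and $[n]_q^!$ are as well. Consequently $\l|G_{[s]}\r|_{v,Gauss}=\l|G_s\r|_{v,Gauss}$ at such $v$.

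Next I would show that both operators $f\mapsto f(qx)$ and $\dq$ are non-increasing for the Gauss norm at every $v\in\cP_f\smallsetminus\cC$. The substitution $x\mapsto qx$ preserves $|\cdot|_{v,Gauss}$ since $|q|_v=1$. For $\dq$ on a polynomial $P=\sum c_ix^i$ one has $\dq P=\sum c_i[i]_qx^{i-1}$, and because $\l|[i]_q\r|_v=1$ the Gauss norm does not increase. For a general $f=P/Q\in K(x)$ the quotient rule
$$
\dq\l(\frac{P}{Q}\r)=\frac{\dq P\cdot Q(x)-P(x)\cdot \dq Q}{Q(x)Q(qx)}\,,
$$
together with the multiplicativity of the Gauss norm and $|Q(qx)|_{v,Gauss}=|Q|_{v,Gauss}$, yields $|\dq f|_{v,Gauss}\le |f|_{v,Gauss}$.

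Then, applying these two facts entrywise to the matrix recurrence $G_{n+1}(x)=G_n(qx)G_1(x)+\dq G_n(x)$, I obtain at every noncyclotomic $v$
$$
\l|G_{n+1}\r|_{v,Gauss}\le \l|G_n\r|_{v,Gauss}\cdot\max\l(\l|G_1\r|_{v,Gauss},1\r)\,,
$$
and by induction $\l|G_{[n]}\r|_{v,Gauss}=\l|G_n\r|_{v,Gauss}\le\max\l(\l|G_1\r|_{v,Gauss},1\r)^n$. Summing,
$$
\sum_{v\in\cP_f\smallsetminus\cC}\log^+\sup_{s\le n}\l|G_{[s]}\r|_{v,Gauss}\le n\sum_{v\in\cP_f\smallsetminus\cC}\log^+\l|G_1\r|_{v,Gauss}\,,
$$
and the right-hand sum over $v$ is finite because $G_1\in M_\nu(K(x))$ has finitely many coefficients in $K$, whose (logarithmic) height is finite by the classical theory. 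Dividing by $n$ and taking the $\limsup$ yields the proposition.

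The only mildly delicate point is the passage from polynomials to rational functions in the Gauss-norm bound for $\dq$; the rest is a straightforward induction and height bound.
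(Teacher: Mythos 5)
Your proof is correct and follows essentially the same route as the paper: both exploit the key observation that $|[n]_q|_v=1$ at noncyclotomic finite places, feed this into the recurrence $G_{n+1}(x)=G_n(qx)G_1(x)+\dq G_n(x)$ to get a geometric bound $\l|G_{[n]}\r|_{v,Gauss}\le\max\l(1,\l|G_1\r|_{v,Gauss}\r)^n$, and finish by observing that $\log^+\l|G_1\r|_{v,Gauss}$ is nonzero only at finitely many places. You spell out more carefully the Gauss-norm monotonicity of $\sgq$ and $\dq$ (including the passage to rational functions via the $q$-quotient rule), which the paper leaves implicit, but the idea and the decomposition into ``almost all $v$ contribute nothing, finitely many contribute at most geometrically'' are the same.
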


\begin{proof}
We recall that the sequence of matrices $G_{[n]}$ satisfies the recurrence relation:
$$
G_{[n+1]}(x)=\frac{G_{[n]}(qx)G_1(x)+\dq G_{[n]}(x)}{[n+1]_q}\,.
$$
Since $|[n+1]_q|_v=1$ for any $v\in\cP_f\smallsetminus\cC$,
we conclude recursively that
$$
\l|G_{[n]}\r|_{v,Gauss}\leq 1\,,
$$
for almost all places $v\in\cP_f\smallsetminus\cC$.
For the remaining finitely many places $v\in\cP_f$, one can deduce from the recursive relation
there exists a constant $C>0$ such that
$\l|G_{[n]}\r|_{v,Gauss}\leq C^n$.
\end{proof}

We immediately obtain the equivalence of our definition of $q$-difference module of type $G$ with
the naive analogue of the classical definition of $G$-module
(\cf \cite[IV, 4.1]{AGfunctions}):

\begin{cor}
A $q$-difference module is of type $G$ if and only if
$$
\sg^{(q)}_{\cP_f}(\cM):=\limsup_{n\to\infty}\frac 1n\sum_{v\in\cP_f}\log^+
\l(\sup_{s\leq n}\l|G_{[s]}\r|_v\r)<\infty\,.
$$
\end{cor}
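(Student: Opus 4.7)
The proof is a short bookkeeping argument built on the partition $\cP_f=\cC\sqcup(\cP_f\smallsetminus\cC)$ together with Proposition \ref{prop:cyclotomicplaces}. The plan is to separate the contributions of cyclotomic and noncyclotomic finite places and to exploit the fact that each summand $\log^+\!\l(\sup_{s\leq n}\l|G_{[s]}\r|_{v,Gauss}\r)$ is nonnegative, so that partial sums can only decrease when places are dropped.

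First I would fix $n$ and write
$$
\sum_{v\in\cP_f}\log^+\!\l(\sup_{s\leq n}\l|G_{[s]}\r|_{v}\r)
=\sum_{v\in\cC}\log^+\!\l(\sup_{s\leq n}\l|G_{[s]}\r|_{v}\r)
+\sum_{v\in\cP_f\smallsetminus\cC}\log^+\!\l(\sup_{s\leq n}\l|G_{[s]}\r|_{v}\r).
$$
Divide by $n$ and take $\limsup$. For the direction ``type $G$ $\Rightarrow$ $\sg^{(q)}_{\cP_f}(\cM)<\infty$'' I would use the subadditivity of $\limsup$ on the right hand side: the first term equals $\sg_\cC^q(\cM)$, which is finite by the hypothesis that $\cM$ is of type $G$, and the second term equals $\sg^{(q)}_{\cP_f\smallsetminus\cC}(\cM)$, which is finite by Proposition \ref{prop:cyclotomicplaces}. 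Hence $\sg^{(q)}_{\cP_f}(\cM)<\infty$.

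Conversely, if $\sg^{(q)}_{\cP_f}(\cM)<\infty$, then because $\log^+\geq 0$ the partial sum over $\cC$ is bounded termwise by the full sum over $\cP_f$, whence
$$
\sg_\cC^q(\cM)\leq\sg^{(q)}_{\cP_f}(\cM)<\infty,
$$
so $\cM$ is of type $G$.

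There is essentially no obstacle here; the only point worth being careful about is that $\limsup$ is not additive, which is why the forward direction needs subadditivity while the reverse direction uses the cruder but sufficient monotonicity of sums of nonnegative terms. Proposition \ref{prop:cyclotomicplaces} is exactly what is needed to make the noncyclotomic contribution harmless.
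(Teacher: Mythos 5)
Your proof is correct and is precisely the ``immediate'' argument the paper has in mind: the paper states the corollary without proof as a direct consequence of Proposition \ref{prop:cyclotomicplaces}, and your decomposition of $\cP_f$ into $\cC$ and $\cP_f\smallsetminus\cC$, together with subadditivity of $\limsup$ in one direction and nonnegativity of $\log^+$ in the other, is exactly the intended bookkeeping.
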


We expect the same kind of result to be true for $G_q$-functions, namely:

\begin{conj}
Suppose that $y=\sum_{n\geq 0}y_nx^n\in K[[x]]$ is solution of a $q$-difference equations
with coefficients in $K$ (\cf \eqref{eq:eqdefn}).
Then:
$$
\sg_{\cP_f\smallsetminus\cC}(y)
=\limsup_{n\to\infty}\frac 1n\sum_{v\in\cP_f\smallsetminus\cC}\log^+
\l(\sup_{s\leq n}|y_s|_v\r)<\infty\,.
$$
\end{conj}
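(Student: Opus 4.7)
The plan is to follow the strategy of Proposition~\ref{prop:cyclotomicplaces}, transposing it from the $q$-difference module to the specific solution $y$. Let $L\in K(x)\langle\sgq,\sgq^{-1}\rangle$ be a nonzero operator of minimal order $\nu$ annihilating $y$, and set $\cM=K(x)\langle\sgq,\sgq^{-1}\rangle/(L)$, equipped with the cyclic basis $\ul e=(y,\sgq y,\ldots,\sgq^{\nu-1}y)$. The associated system is $Y(qx)=A_1(x)Y(x)$ with $Y(x)=(y(x),y(qx),\ldots,y(q^{\nu-1}x))^T\in K[[x]]^\nu$, and Proposition~\ref{prop:cyclotomicplaces} applied to $\cM$ gives $\sg^{(q)}_{\cP_f\smallsetminus\cC}(\cM)<\infty$: the iterated $q$-derivation matrices $G_{[n]}$ grow at most geometrically in Gauss norm away from the cyclotomic places.

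The bridge from $G_{[n]}$ to the coefficients $y_n$ is the $q$-Taylor identity $\dq^n Y/[n]_q^!=G_{[n]}(x)Y(x)$, which as formal power series identifies $Y_n$ with the constant term of $G_{[n]}(x)Y(x)$. When $G_{[n]}$ is regular at $x=0$, equivalently when $A_1(0)=I$, this reduces to $y_n=\bigl(G_{[n]}(0)Y(0)\bigr)_0$, so $|y_n|_v\le|G_{[n]}|_{v,Gauss}\cdot|y_0|_v$; summing $\log^+$ over $v\in\cP_f\smallsetminus\cC$ and invoking Proposition~\ref{prop:cyclotomicplaces} yields the required $O(n)$ bound on $\sum_v\log^+\sup_{s\le n}|y_s|_v$, hence the desired finite $\limsup$.

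The main obstacle is the general case $A_1(0)\ne I$, in which $G_{[n]}$ has a pole at $0$ whose order may grow with $n$. The product $G_{[n]}Y$ still lies in $K[[x]]^\nu$, because the pole is compensated by the initial coefficients $Y_0,\ldots,Y_N$ of the formal solution; writing the Laurent expansion $G_{[n]}(x)=\sum_{k\ge-N}G_{[n]}^{(k)}x^k$ yields $y_n$ as the first entry of $\sum_{j=0}^{N}G_{[n]}^{(-j)}Y_j$. To conclude, one must bound each Laurent coefficient $G_{[n]}^{(-j)}$ $v$-adically by $|G_{[n]}|_{v,Gauss}$ uniformly in $n$, and control the pole order $N$ of $G_{[n]}$ at $0$. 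Both questions point to the local theory at $0$: a natural route is to first establish that $\cM$ is regular singular at $0$ and then to gauge-transform it over $K[[x]]$ so that the connection has trivial constant term at $0$, tracking the arithmetic size of the gauge at places in $\cP_f\smallsetminus\cC$. This local-to-global passage is the technical heart of the conjecture; once it is in place, the global Galo\v ckin-type estimate of Proposition~\ref{prop:cyclotomicplaces} delivers the result.
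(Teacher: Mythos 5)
The statement you are attempting to prove is labeled as a \emph{Conjecture} in the paper, and the paper gives no proof of it. Immediately after the conjecture the author inserts a remark making exactly the observations you reproduce: that for almost all $v\in\cP_f\smallsetminus\cC$ the bound $|G_{[n]}|_{v,Gauss}\le 1$ forces $\sup_n|y_n|_v<\infty$, but that ``one would need some uniformity with respect to $v$ and $n$'' to control $\sg_{\cP_f\smallsetminus\cC}(y)$; and that the conjecture is trivial when $0$ is an ordinary point since $\sum_n G_{[n]}(0)x^n$ is then a fundamental solution. Your first two paragraphs are essentially a restatement of this remark, and they are correct as far as they go.

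The third paragraph, however, does not close the gap — it merely names it. Your proposed route (establish first that $\cM$ is regular singular at $0$, then perform a $K[[x]]$-gauge transformation to kill the constant part of the connection, while ``tracking the arithmetic size of the gauge'') suffers from two problems. First, there is no way to know in advance that $\cM$ is regular singular: Theorem \ref{thm:regularity} gives regularity only for modules of type $G$, and the global $q$-Galo\v ckin condition defining type $G$ involves precisely the kind of arithmetic control over the $G_{[n]}$ one is trying to obtain; the conjecture itself places no arithmetic hypothesis on $y$ beyond $q$-holonomy, so nothing rules out an irregular singularity at $0$. The paper is explicit that one ``does not want to assume regularity \emph{a priori},'' since the whole point is to be able to read regularity off a single solution. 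Second, even granting regularity, the statement you would need — that the gauge matrix and the Laurent coefficients $G_{[n]}^{(-j)}$ are controlled $v$-adically \emph{uniformly in $v$ and $n$} — is exactly the ``uniformity'' the author says is missing; it is the open technical heart of the conjecture, not a step that can be waved through by pointing to Proposition \ref{prop:cyclotomicplaces}. In short: you have correctly reproduced the paper's own analysis of why this is hard, but you have not produced a proof, and the paper itself does not claim one.
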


The last statement would immediately imply that one can define $G_q$-functions
in the following way:

\begin{conjdefn}
We say that the series $y=\sum_{n\geq 0}y_nx^n\in K[[x]]$ is a $G_q$-function if
$y$ is solution of a $q$-difference equations with coefficients in $K$ and moreover
$$
\sg_{\cC\cup\cP_\infty}(y)=\limsup_{n\to\infty}\frac 1n\sum_{v\in\cC\cup\cP_\infty}\log^+
\l(\sup_{s\leq n}|y_s|_v\r)<\infty\,.
$$
\end{conjdefn}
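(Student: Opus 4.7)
The final environment is a \emph{Conjectural definition}, not a theorem, so strictly speaking there is nothing to prove: the author is simply recording an alternative definition that would be available once the preceding Conjecture is established. My ``proof proposal'' must therefore be honest about this and reduce to (a) explaining in what sense the new wording deserves to be called a definition of ``$G_q$-function'' and (b) exhibiting the exact implication from the Conjecture that makes it equivalent to the original definition.

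The plan is to verify this equivalence, taking the Conjecture as a black box. Fix $y=\sum_{n\geq 0} y_n x^n\in K[[x]]$ satisfying a nontrivial relation \eqref{eq:eqdefn}. Using the partition $\cP=\cP_\infty\sqcup\cC\sqcup(\cP_f\smallsetminus\cC)$ and the non-negativity of each summand $\log^+\l(\sup_{s\leq n}|y_s|_v\r)$, split the partial sums inside the $\limsup$ defining $\sg(y)$ as
\[
\tfrac{1}{n}\sum_{v\in\cP_\infty\cup\cC}\lgp\l(\sup_{s\leq n}|y_s|_v\r)
+\tfrac{1}{n}\sum_{v\in\cP_f\smallsetminus\cC}\lgp\l(\sup_{s\leq n}|y_s|_v\r).
\]
Passing to $\limsup$ and using subadditivity yields
\[
\sg_{\cC\cup\cP_\infty}(y)\le \sg(y)\le \sg_{\cC\cup\cP_\infty}(y)+\sg_{\cP_f\smallsetminus\cC}(y).
\]
The Conjecture asserts exactly that the rightmost term is finite, so under the Conjecture one obtains the equivalence $\sg(y)<\infty\Leftrightarrow\sg_{\cC\cup\cP_\infty}(y)<\infty$, which is precisely the statement that the Conjectural definition coincides with the original one.

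Thus the only substantive mathematical content in the Conjectural definition is the preceding Conjecture itself, which is the exact analogue for series of Proposition~\ref{prop:cyclotomicplaces} for $q$-difference modules. The steps above are all formal manipulations of $\limsup$ with non-negative summands, so the main (indeed the sole) obstacle is not in the proposal but in the Conjecture it presupposes: namely, controlling $\sup_{s\leq n}|y_s|_v$ at noncyclotomic finite places directly from the $q$-difference equation, without the intermediary $[n]_q^!$ that saved the argument at the level of the matrices $G_{[n]}$ in Proposition~\ref{prop:cyclotomicplaces}. Beyond recording the equivalence above, I would not pursue a further proof here, since the author himself leaves the Conjecture open.
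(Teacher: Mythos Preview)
Your reading is correct: the item is a \emph{Conjectural definition}, and the paper offers no proof---it merely asserts that the preceding Conjecture ``would immediately imply'' this alternative definition. Your proposal spells out exactly that immediate implication via the partition $\cP=\cP_\infty\sqcup\cC\sqcup(\cP_f\smallsetminus\cC)$ and the subadditivity of $\limsup$, which is precisely the intended (trivial) argument; there is nothing more to add.
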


\begin{rmk}
The fact that for almost all $v\in\cP_f\smallsetminus\cC$
we have $|G_{[n]}(x)|_{v,Gauss}\leq 1$ for any $n\geq 1$ implies that for almost all
$v\in\cP_f\smallsetminus\cC$
a ``solution'' $y(x)=\sup_n y_n x^n\in K[[x]]$ of a $q$-difference system with coefficient in $K(x)$ is
bounded, in the sense that $\sup_n|y_n|_v<\infty$.
Unfortunately, one would need some uniformity with respect to $v$ and $n$ to conclude something about
$\sg_{\cP_f\smallsetminus\cC}(y)$.
\par
Notice that if $0$ is an ordinary point, the conjecture is trivial since
$$
\sum_{n\geq 0}G_{[n]}(0)x^n
$$
is a fundamental solution of the linear system $Y(qx)=A_1(x)Y(x)$.
A $q$-analogue of the techniques developed
in \cite[V]{AGfunctions} (\cf also \cite[Chap. VII]{DGS})
would probably allow to establish the conjecture under the assumption that $0$ is a regular point.
This is not satisfactory because one of the purposes of the whole theory is
the possibility of reading the regularity of a $q$-difference equation
on one single solution (\cf Theorem \ref{thm:regularity} below), so one does not want to
assume regularity \emph{a priori}.
\end{rmk}

\section{Main results}

A $q$-difference module $(M,\Sgq)$ is said to be regular singular at $0$ if there
exists a basis $\ul e$ such that the Taylor expansion of the matrix $A_1(x)$
is in $Gl_\nu(K[[x]])$.
It is said to be regular singular \emph{tout court} if it is regular singular
both at $0$ and at $\infty$.
We have the following analogue of a well-known differential result
(\cf \cite[\S13]{KatzTurrittin}; \cf also \cite[\S6.2.2]{DVInv} for $q$-difference modules
over a number field):

\begin{thm}\label{thm:regularity}
A $G$-$q$-difference module $\cM$ over $K(x)$ is regular singular.
\end{thm}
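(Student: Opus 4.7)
The plan is to reduce to proving regular singularity at $0$ — the situation at $\infty$ follows symmetrically via the substitution $x\mapsto 1/x$, which preserves the class of $G$-$q$-difference modules — and then to deduce it from a $q$-analogue of the Katz--Turrittin theorem \cite[\S13]{KatzTurrittin}, via nilpotence of a $q$-curvature at almost every cyclotomic place.

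For each $v\in\cC$, let $\ell_v$ denote the order of the residue of $q$ modulo $v$, which is a well-defined positive integer by the very definition of a cyclotomic place. Reducing the system $Y(qx)=A_1(x)Y(x)$ modulo $v$ yields a $\sg_{\bar q}$-difference structure on a $k_v(x)$-vector space with $\bar q$ a primitive $\ell_v$-th root of unity; the natural $q$-curvature attached to this reduction is the matrix $\psi_v:=G_{\ell_v}(x)\bmod v$, a linear object since $\Sgq^{\ell_v}$ degenerates modulo $v$. The point is that the denominator $[\ell_v]_q^!$ of $G_{[\ell_v]}$ has strictly positive $v$-adic valuation, so the bound $|G_{[\ell_v]}|_{v,Gauss}$ controls $\psi_v$ up to a factor of known positive $v$-valuation.

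One then combines Proposition~\ref{prop:cyclotomicplaces}, which guarantees trivial growth at every $v\in\cP_f\smallsetminus\cC$ away from a finite set, with the global $q$-Galo\v ckin condition $\sg_\cC^q(\cM)<\infty$. Averaged over $\cC$, the latter forces, at almost every $v\in\cC$, the iterates of $\psi_v$ to remain bounded in Gauss norm while being divided by powers of the uniformizer at $v$; this compels $\psi_v^\nu\equiv 0$ for all but finitely many $v\in\cC$. Once this global nilpotence of $q$-curvatures is in hand, the $q$-analogue of Katz's regularity theorem — established for $q$-difference modules over number fields in \cite[\S6.2.2]{DVInv} and now to be transposed to finite extensions of $k(q)$ — applies: the Newton polygon of $\cM$ at $0$ cannot have any strictly positive slope, since at infinitely many $v\in\cC$ the formal slope decomposition would be obstructed by nilpotence of $\psi_v$. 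This forces all slopes at $0$ to vanish, which is exactly regularity at $0$.

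The main obstacle, in my view, is the globalization step: extracting, from the integrated estimate $\sg_\cC^q(\cM)<\infty$ (an averaged statement over an infinite set of places), a pointwise nilpotence of $\psi_v$ at \emph{almost every} $v\in\cC$. This requires a careful counting of cyclotomic places of bounded residual degree — playing the role of the classical count of primes dividing a cyclotomic polynomial — and a uniform comparison against the growth rates given by the Galo\v ckin bound, in the spirit of the Bombieri--Andr\'e estimates of \cite[VI]{AGfunctions}, but adapted to the arithmetic of $k(q)$ rather than of $\Q$. Once that averaging-to-pointwise step is carried out, the remaining arguments are a translation of \cite{KatzTurrittin} and \cite[\S6.2.2]{DVInv} to the present base field.
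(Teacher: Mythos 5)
Your high-level plan coincides with the paper's: reduce to regularity at $0$, use the global $q$-Galo\v ckin bound to extract nilpotent reduction at infinitely many cyclotomic places, and then run a Katz-type argument. However, both steps you flag are precisely where the real content lies, and the proposal leaves them open while also overstating what they give. For the ``averaging-to-pointwise'' step, the paper (Proposition~\ref{prop:globalnilpotence}) does \emph{not} obtain nilpotent reduction at all but finitely many $v\in\cC$ as you claim; it only obtains that $\sum_{v\in\cC_0}1/\kappa_v<\infty$, where $\cC_0$ is the bad set, and therefore that there are \emph{infinitely many} $v$ with nilpotent reduction. This weaker conclusion suffices. The mechanism is not a count of cyclotomic places of bounded residual degree: it is Lemma~\ref{lemma:limit} (existence of the $v$-adic limit of $\frac1n\lgp\sup_{s\leq n}|G_{[s]}|_{v,Gauss}$, coming from the iterated $q$-Leibniz formula), then Fatou's lemma to interchange $\sum_v$ and $\lim_n$, then Corollary~\ref{cor:nonnilpextimate}, which says that a place $v$ fails to give nilpotent reduction exactly when that limit is $\geq|[\kappa_v]_q|_v^{-1/\kappa_v}$, contributing at least $\frac{\log d^{-1}}{\kappa_v}$ (for unramified $v$) to the finite sum.

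The Katz-type step also cannot be invoked as a black box about ``the formal slope decomposition being obstructed''. The paper first passes to a Kummer cover $L(t)/K(x)$ with $x=t^r$ (Lemma~\ref{basis change}), which is necessary because the formal classification of $q$-difference modules is available only after such ramification. It then uses Lemma~\ref{rationalgauge} to get a basis $\ul f$ in which $\Sg_{\wtilde q}\ul f=\ul f\,B(t)$ with $B(t)=B_\ell/t^\ell+\dots$ and $B_\ell$ a \emph{constant non-nilpotent} matrix. The concrete computation is then: the leading term of the iterated product $B(t)B(\wtilde q t)\cdots B(\wtilde q^{\kappa_w-1}t)$ is $B_\ell^{\kappa_w}$ up to a power of $\wtilde q$ and $t$, so if $\ell\neq 0$, nilpotent reduction modulo $\varpi_w$ (relation \eqref{uniprel}) forces $B_\ell^{\kappa_w}\equiv 0\pmod{\varpi_w}$ at infinitely many $w$, hence $B_\ell$ nilpotent, a contradiction; thus $\ell=0$ and $B_0$ invertible, i.e.\ regularity at $0$. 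Your phrasing ``cannot have any strictly positive slope'' misses that one has to rule out nonzero $\ell$ of either sign, and it is the explicit normal form from the formal classification, not an abstract obstruction on the Newton polygon, that makes the argument go through.
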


Let $\vec y(x)={}^t(y_0(x),\dots,y_{\nu-1}(x))\in K[[x]]^\nu$
be a solution of the $q$-difference system associated to $\cM=(M,\Sgq)$
with respect to the basis $\ul e$:
$$
\vec y(qx)=A_1(x)\vec y(x)\,.
$$
We say that $\vec y(x)$ is an injective solution if
$y_1(x),\dots,y_\nu(x)$ are lineairly independent over $K(x)$.
\par
We have the following $q$-analogue of the André-Chudnovsky Theorem
\cite[VI]{AGfunctions}:

\begin{thm}\label{thm:chudnovsky}
Let $\vec y(x)={}^t(y_0(x),\dots,y_{\nu-1}(x))\in K[[x]]^\nu$
be an injective solution of the $q$-difference system associated to $\cM=(M,\Sgq)$
with respect to the basis $\ul e$.
\par
If $y_0(x),\dots,y_{\nu-1}(x)$ are $G_q$-functions, then $\cM$ is a $G$-$q$-difference module.
\end{thm}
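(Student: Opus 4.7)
The approach is a $q$-difference analogue of Andr\'e's proof of the Chudnovsky theorem \cite[VI]{AGfunctions}. The plan is to express the iterated matrices $G_{[n]}$ explicitly as ratios of determinants built from the Taylor coefficients of the components $y_i$ of $\vec y$, to control these ratios locally at each cyclotomic place $v\in\cC$, and then to sum over $\cC$ using the finite global size of the $G_q$-functions $y_i$ in order to conclude $\sg_\cC^q(\cM)<\infty$.

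The construction is the heart of the argument. Since $y_0,\dots,y_{\nu-1}\in K[[x]]$ are $K(x)$-linearly independent, I would first build a $\nu\times\nu$ auxiliary matrix $W(x)$ over $K[[x]]$ from $\vec y$ and some of its iterates---for instance a Casorati-type matrix $W_{ij}=y_i(q^jx)$ or a $q$-Wronskian-type matrix $W_{ij}=\dq^j y_i(x)$---which is invertible in $\mathrm{Gl}_\nu(K((x)))$ thanks to the independence assumption. Expanding the identity $\frac{\dq^n}{[n]_q^!}\vec y(x)=G_{[n]}(x)\vec y(x)$ in Taylor coefficients yields, after clearing a common denominator $Q_n(x)$ for the entries of $G_{[n]}$, a linear system on the numerator coefficients of $G_{[n]}$ whose right-hand side involves $q$-binomial coefficients $\binom{s}{n}_q$ multiplying the Taylor coefficients $y_{i,s}$. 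Cramer's rule applied to $W$ then writes each entry of $G_{[n]}(x)$ as a rational function whose numerator is a polynomial in the $y_{i,s}$ (for $s\leq n+O(1)$) and in the $\binom{s}{n}_q$, and whose denominator is a bounded power of $\det W$.

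At each cyclotomic place $v\in\cC$ I would then bound $|G_{[n]}|_{v,Gauss}$ by combining contributions from each piece of the Cramer formula: the $q$-binomials $\binom{s}{n}_q$ are polynomials in $q$ with nonnegative integer coefficients, whose $v$-adic valuation is controlled and grows at worst linearly in $n$; the $G_q$-function hypothesis gives $\sum_{v\in\cP}\log^+\sup_{s\leq N}|y_{i,s}|_v\leq CN$; and the factor coming from $\det W$ is independent of $n$. Summing over $v\in\cC$ and dividing by $n$ before passing to the $\limsup$ yields $\sg_\cC^q(\cM)<\infty$, as required.

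The main obstacle is the construction step with a controlled denominator. In the differential case, the Leibniz rule for $d/dx$ couples cleanly with $K(x)$-multiplication, so that the Cramer denominator is essentially the usual Wronskian of the $y_i$. In the $q$-difference setting, however, $\dq$ twists through $K(x)$ via $\sgq$, and this twist must be tracked through Cramer's rule. A delicate point is to ensure that $\det W$ does not acquire spurious cyclotomic valuation of order $n$, which would destroy the final $\limsup$ estimate: this requires a uniform choice of the auxiliary matrix $W$, analogous to Andr\'e's choice of an admissible vector in his Shidlovsky-type lemma.
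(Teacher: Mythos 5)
Your high-level strategy is right in spirit---write $G_{[n]}$ via a Cramer-type identity, control a determinant, and sum over $v\in\cC$ using the finite size---but the central technical step of the paper's proof is missing, and without it the argument does not close. You propose to build the auxiliary matrix $W$ directly from $\vec y$ and its $\sgq$- or $\dq$-iterates. But the entries of such a $W$ are \emph{full power series} in $K[[x]]$, and so is $\det W$; the Gauss norm $|\det W|_{v,Gauss}$ is then either undefined or uncontrolled, and the Product Formula (which the paper uses by evaluating a \emph{polynomial} determinant $\ol\Delta$ at a root of unity) has nothing to bite on. The ``finite size'' hypothesis on the $y_i$ bounds $\sum_v\log^+\sup_{s\leq n}|y_{i,s}|_v$ but gives no global control on a power-series determinant.

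What the paper actually does, and what your proposal needs, is a Hermite--Pad\'e approximation step: one chooses (via Siegel's lemma, Proposition~\ref{prop:costruzioneg}) a polynomial $g$ of degree $\leq N$ with small height such that $\vec P:=(g\vec y)_{\leq N}$ approximates $g\vec y$ to high order, and then forms $R^{<0>}$ from $\vec P$ by iterating the dual operator $\La=A_1^{-1}\circ(\dq-G_1)$, not from $\vec y$. The entries of $R^{<0>}$ (after clearing the explicit denominators $Q_i$) are polynomials of degree $O(N)$ with controlled height, so $\ol\Delta(x)$ is a genuine element of $K[x]$ to which the Product Formula applies. Crucially, the invertibility of $R^{<0>}$ is then \emph{not} automatic---one must prove it, and the paper does so via a Shidlovsky-type argument (Theorem~\ref{thm:determinante}), using the $K(x)$-linear independence of the $y_i$ and a bound on the total degree of certain auxiliary rational functions. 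Your last paragraph gestures at ``Andr\'e's choice of an admissible vector in his Shidlovsky-type lemma'', but the construction itself (polynomial truncation plus the Shidlovsky nonvanishing criterion) is precisely what is absent from the body of your proposal; it cannot be relegated to a side remark, since replacing $\vec y$ by $\vec P$ is what makes every subsequent estimate possible.
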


We can immediately state a corollary:

\begin{cor}
Let $\vec y(x)={}^t(y_0(x),\dots,y_{\nu-1}(x))\in K[[x]]^\nu$
be an injective solution of the $q$-difference system associated to $\cM=(M,\Sgq)$
with respect to the basis $\ul e$.
\par
If $y_1(x),\dots,y_\nu(x)$ are $G_q$-functions, then $\cM$ is regular singular.
\end{cor}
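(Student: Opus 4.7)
The plan is essentially to chain together the two main theorems stated just above, Theorem \ref{thm:chudnovsky} and Theorem \ref{thm:regularity}. Given the hypotheses of the corollary, an injective solution $\vec y(x)$ whose entries are $G_q$-functions is exactly the input required by the $q$-analogue of the Andr\'e--Chudnovsky theorem. So the first (and only substantive) step is to apply Theorem \ref{thm:chudnovsky} to conclude that $\cM = (M,\Sgq)$ is a $G$-$q$-difference module, i.e.\ that the global $q$-Galo\v ckin condition
$$
\sg_\cC^q(\cM) = \limsup_{n\to\infty}\frac1n\sum_{v\in\cC}\log^+\!\l(\sup_{s\leq n}\l|G_{[s]}\r|_{v,Gauss}\r)<\infty
$$
is satisfied. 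Note that the basis $\ul e$ and the associated matrix $A_1(x)$ appearing in the hypothesis of the corollary are the same as in the hypothesis of Theorem \ref{thm:chudnovsky}, so no change of basis is required; and since $G$-ness is independent of the basis (by the proposition following the definition of $G$-$q$-difference module), the intrinsic conclusion $\cM \in G\text{-}q\text{-}Diff$ carries over.

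Once $\cM$ is known to be a $G$-$q$-difference module, the second step is a direct invocation of Theorem \ref{thm:regularity}: any $G$-$q$-difference module over $K(x)$ is regular singular both at $0$ and at $\infty$. This gives the conclusion of the corollary immediately. There is no genuine obstacle here, because all the arithmetic work has been absorbed into Theorem \ref{thm:chudnovsky} (controlling $G_{[s]}$ at the cyclotomic places using the $G_q$-function hypothesis and the construction of an affine-free $\sgq$-stable sub-object) and Theorem \ref{thm:regularity} (the $q$-analogue of Katz's regularity theorem for $G$-modules). The only potential subtlety worth mentioning explicitly in the written proof is the mild indexing discrepancy in the statement ($y_0,\dots,y_{\nu-1}$ versus $y_1,\dots,y_\nu$), which is purely notational and should simply be acknowledged so that Theorem \ref{thm:chudnovsky} applies verbatim.
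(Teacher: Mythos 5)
Your proof is correct and is precisely the intended one: the paper presents this corollary immediately after Theorem \ref{thm:chudnovsky} (``We can immediately state a corollary'') with no further argument, because it follows by applying Theorem \ref{thm:chudnovsky} to obtain that $\cM$ is a $G$-$q$-difference module and then invoking Theorem \ref{thm:regularity}. Your remark about the indexing slip ($y_0,\dots,y_{\nu-1}$ versus $y_1,\dots,y_\nu$) correctly identifies a typographical inconsistency already present in the paper's own statement.
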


Thanks to the cyclic vector lemma we can state the following
(\cf \cite[Annexe B]{Sfourier}):

\begin{cor}\label{cor:regularity}
Let $y(x)$ a $G_q$-function and let
\beq\label{eq:eq}
a_0(x)y(x)+a_1(x)y(qx)+\dots+a_\nu(x)y(q^\nu x)=0\,.
\eeq
a $q$-difference equation of minimal order $\nu$, having $y(x)$ as a solution.
\par
Then \eqref{eq:eq} is fuchsian,
\ie we have
$\ord_x a_i\geq\ord_x a_0=\ord_x a_\nu$ and $\deg_x a_i\leq\deg_x a_0=\deg_x a_\nu$,  for any $i=0,\dots,\nu$.
\end{cor}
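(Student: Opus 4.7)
The plan is to realize the minimal operator as the defining relation of a cyclic $q$-difference module, invoke Theorems \ref{thm:chudnovsky} and \ref{thm:regularity}, and then translate regular singularity of the module into the stated fuchsian condition on the $a_i$'s.

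Write $L=\sum_{i=0}^\nu a_i(x)\sgq^i$ and set $\cM=K(x)[\sgq]/K(x)[\sgq]\cdot L$, with its natural $\Sgq$-action. In the basis $\ul e=(1,\sgq,\dots,\sgq^{\nu-1})$, $\Sgq$ is the companion matrix of $L$, and $\vec y(x)={}^t(y(x),y(qx),\dots,y(q^{\nu-1}x))$ is a solution of the associated system: the first $\nu-1$ coordinates give the tautological shift relations, and the last uses \eqref{eq:eq}. Each component $y(q^ix)=\sum_n q^{in}y_nx^n$ is again a $G_q$-function: shifting \eqref{eq:eq} gives an annihilating $q$-difference equation, and the estimate $\log^+\sup_{s\le n}|q^{is}y_s|_v\le in\log^+|q|_v+\log^+\sup_{s\le n}|y_s|_v$, summed over $v\in\cP$, yields $\sg(y(q^ix))\le\sg(y)+i\sum_v\log^+|q|_v<\infty$, the latter sum being finite by the product formula applied to $q\in K\smallsetminus\{0\}$. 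By the minimality of $\nu$, the components $y(q^ix)$ are also $K(x)$-linearly independent, for any nontrivial relation would give a $q$-difference equation of order $<\nu$ killing $y$. Therefore $\vec y$ is an injective $G_q$-function solution of the system associated to $\cM$, and Theorem \ref{thm:chudnovsky} yields that $\cM$ is a $G$-$q$-difference module, while Theorem \ref{thm:regularity} gives that $\cM$ is regular singular at both $0$ and $\infty$.

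It remains to read the orders and degrees of the $a_i$'s off the regularity of $\cM$. Since $L$ has minimal order $\nu$, the class of $1$ is a cyclic vector of $\cM$ whose annihilator is exactly the ideal generated by $L$; for such a cyclic presentation, the Newton polygon of $L$ at $0$ (resp.~at $\infty$) coincides with that of the module. Regular singularity at $0$ forces every slope to vanish, so the polygon of $L$ at $0$ reduces to the horizontal segment joining $(0,\ord_x a_0)$ to $(\nu,\ord_x a_\nu)$, which is precisely $\ord_x a_0=\ord_x a_\nu$ and $\ord_x a_i\ge\ord_x a_0$ for all $i$. Applying the same argument after the change of variable $x\mapsto 1/x$ yields the symmetric condition $\deg_x a_i\le\deg_x a_0=\deg_x a_\nu$ at $\infty$. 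The main obstacle is the last identification: one must check that a nonzero slope present in $\cM$ would necessarily appear as a slope of the Newton polygon of the minimal annihilator of a cyclic generator, a $q$-analogue of the classical fact for minimal differential operators.
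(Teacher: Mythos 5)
Your proposal follows the same route as the paper: realize the minimal operator as a cyclic $q$-difference module, feed $\vec y={}^t(y,\sgq y,\dots,\sgq^{\nu-1}y)$ as an injective solution into Theorem~\ref{thm:chudnovsky} (to get type $G$) and Theorem~\ref{thm:regularity} (to get regular singularity), and then read off the fuchsian shape of the coefficients via the Newton polygon of the minimal annihilator. The paper compresses all of this into one line and a citation, while you have correctly filled in the two points it leaves implicit (that each component $y(q^ix)$ is again a $G_q$-function with finite size, and that minimality of $\nu$ gives $K(x)$-linear independence of the components). The "main obstacle" you flag at the end — that a slope of the module must appear as a slope of the Newton polygon of the minimal operator of a cyclic vector — is precisely the content of the reference the paper invokes (\cite[Annexe B]{Sfourier}), so you have located the right lemma even if you did not reprove it.
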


The proofs of Theorem \ref{thm:regularity} and Theorem \ref{thm:chudnovsky}
are the object of \S\ref{sec:proofregularity} and \S\ref{sec:proofchudnovsky}, respectively.

\section{Nilpotent reduction at cyclotomic places}

We denote by $\cO_K$ the ring of integers of $K$, $k_v$ the residue field of $K$
with respect to the pace $v$, $\varpi_v$ the uniformizer of $v$ and $q_v$ the
image of $q$ in $k_v$, which is defined for all places $v\in\cP$.
Notice that $q_v$ is a root of unity for all $v\in\cC$. Let $\kappa_v\in\N$ be the order
of $q_v$, for $v\in\cC$.
\par
Let $\cM=(M,\Sgq)$ be a $q$-difference module over $K(x)$.
We can always choose a lattice $\wtilde M$ of $M$ over an algebra of the form
\beq\label{eq:algebraA}
\cA=\cO_K\l[x,\frac1{P(x)},\frac1{P(qx)},\frac1{P(q^2x)},...\r]\,,
\eeq
for some $P(x)\in\cO_K[x]$,
such that for almost all $v\in\cC$ we can consider
the $q_v$-difference module $M_v=\wtilde M\otimes_\cA k_v(x)$, with the structure induced
by $\Sgq$.
In this way, for almost all $v\in\cC$, we obtain a $q_v$-difference module
$\cM_v=(M_v,\Sg_{q_v})$ over $k_v(x)$, having the particularity that $q_v$ is
a root of unity.  This means that $\sg_{q_v}^{\kappa_v}=1$ and that
$\Sg_{q_v}^{\kappa_v}$ is a $k_v(x)$-linear operator.
\par
The results in \cite[\S2]{DVInv} apply to this situation: we recall some of them.
Since we have:
$$
\sg_{q_v}^{\kappa_v}=1+(q-1)^{\kappa_v} x^{\kappa_v}d_{q_v}^{\kappa_v}
$$
and
$$
\Sg_{q_v}^{\kappa_v}=1+(q-1)^{\kappa_v} x^{\kappa_v}\De_{q_v}^{\kappa_v}\,,
$$
where $\De_{q_v}=\frac{\Sg_{q_v}-1}{(q_v-1)x}$, the following facts are equivalent:
\begin{enumerate}
\item
$\Sg_{q_v}^{\kappa_v}$ is unipotent;
\item
$\De_{q_v}^{\kappa_v}$ is a linear nilpotent operator;
\item
the reduction of $A_{\kappa_v}(x)-1$ modulo $\varpi_v$ is a nilpotent matrix;
\item
the reduction of $G_{\kappa_v}(x)$ modulo $\varpi_v$ is nilpotent;
\item
there exists $n\in\N$ such that $\l|G_{n{\kappa_v}}(x)\r|_{v,Gauss}\leq |\varpi_v|_v$.
\end{enumerate}

\begin{defn}
If the conditions above are satisfied we say that $\cM$ has \emph{nilpotent reduction
(of order $n$)} modulo $v\in\cC$.
\end{defn}

\begin{rmk}
If the characteristic of $k$ is $0$ and if
$\l|G_{\kappa_v}(x)\r|_{v,Gauss}\leq |[\kappa_v]_q|_v$,
the module $\cM_v$ has a structure of iterated $q$-difference module,
in the sense of \cite[\S3]{HardouinIterative}.
In particular, if $v$ is a non ramified place of $K/k(q)$, then $|[\kappa_v]_q|_v=|\varpi_v|_v$.
\end{rmk}

The following result is a $q$-analogue of a well-known differential $p$-adic estimate
(\cf for instance \cite[page 96]{DGS}).
It has already been proved in the case of $q$-difference equations over a
$p$-adic field in
\cite[\S5.1]{DVInv}.
We are
only sketching the argument:
only the estimate of the $q$-factorials are slightly different from the case of mixed characteristic.

\begin{prop}
If $\cM=(M,\Sgq)$ has nilpotent reduction(of order $n$) modulo $v\in\cC$ then
$$
\limsup_{m\to\infty}
\sup\l(1,\l|G_{[m]}\r|_{v,Gauss}\r)^{1/m}\leq
|\varpi_v|_v^{1/n\kappa_n}|[\kappa_v]_q|_v^{-1/\kappa_v}\,.
$$
\end{prop}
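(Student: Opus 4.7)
The plan is to combine two ingredients: a multiplicative bulk estimate on $G_{aN}$, where $N := n\kappa_v$, obtained by iterating the nilpotent-reduction hypothesis $|G_N|_{v,\mathrm{Gauss}} \le |\varpi_v|_v$; and the asymptotic $\lim_m |[m]_q^!|_v^{1/m} = |[\kappa_v]_q|_v^{1/\kappa_v}$ for the $v$-adic sizes of the $q$-factorials. Since $|G_{[m]}|_{v,\mathrm{Gauss}} = |G_m|_{v,\mathrm{Gauss}}/|[m]_q^!|_v$, dividing these two inputs and passing to the limsup of $m$-th roots yields the stated bound.

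For the first ingredient I would use the $q$-Leibniz identity $G_{m+n}(x) = \sum_{k=0}^m \binom{m}{k}_q\, \sgq^k(\dq^{m-k} G_n)(x)\, G_k(x)$, obtained by iterating the rule $\dq \circ M_f = M_{\sgq f}\circ \dq + M_{\dq f}$ for multiplication operators; this is the $q$-analogue of Leibniz's formula for iterated derivatives of a product. At the cyclotomic place $v$ one has $|q|_v = 1$ and, after a harmless unramified base change when $\kappa_v = 1$, also $|q-1|_v = 1$, so $|\sgq|_{v,\mathrm{Gauss}} = 1$, $|\dq|_{v,\mathrm{Gauss}}\le 1$, and the $q$-binomial coefficients, lying in $\Z[q]$, have Gauss norm at most $1$. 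Applying the identity with $m = aN$, $n = N$, the top-index summand $k = aN$ supplies the required extra factor $|\varpi_v|_v$ via the nilpotent reduction $|G_N|_{v,\mathrm{Gauss}}\le|\varpi_v|_v$, while the remaining summands are controlled by the crude exponential bound $|G_j|_{v,\mathrm{Gauss}}\le C_0^j$ coming from the elementary recurrence $G_{j+1} = G_j(qx)G_1 + \dq G_j$. After careful bookkeeping of the subexponential error terms, as in \cite[\S5.1]{DVInv}, the factor $|\varpi_v|_v$ propagates multiplicatively and gives $\limsup_a |G_{aN}|_{v,\mathrm{Gauss}}^{1/a}\le |\varpi_v|_v$.

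The second ingredient is the novel point. Two remarks suffice: $|[j]_q|_v = 1$ whenever $\kappa_v\nmid j$ (because $q_v^j\neq 1$ in $k_v$ while $|q-1|_v = 1$), and for $j = s\kappa_v$ the factorization $[s\kappa_v]_q = [s]_Q\, [\kappa_v]_q$ holds with $Q := q^{\kappa_v}$. Multiplying over $j\le m$ yields $|[m]_q^!|_v = |[\kappa_v]_q|_v^{\lfloor m/\kappa_v\rfloor}\cdot|[\lfloor m/\kappa_v\rfloor]_Q^!|_v$, and since $Q \equiv 1\pmod{\varpi_v}$ the residual factor $|[s]_Q^!|_v^{1/s}$ tends to $1$ — trivially in equal characteristic, and up to a harmless Legendre correction in mixed characteristic that does not affect the leading $m$-th root. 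Combining with the bulk estimate and writing $m = aN + r$ with $0\le r<N$ (the remainder being absorbed in a subexponential constant via one further use of the $q$-Leibniz identity), one obtains in the limit the bound $|\varpi_v|_v^{1/N}\,|[\kappa_v]_q|_v^{-1/\kappa_v} = |\varpi_v|_v^{1/(n\kappa_v)}\,|[\kappa_v]_q|_v^{-1/\kappa_v}$, as claimed. The main obstacle is precisely this $q$-factorial asymptotic: it is the point where the $q$-adic geometry at a cyclotomic place substantively diverges from the mixed-characteristic case of \cite{DVInv}, as the author explicitly emphasizes.
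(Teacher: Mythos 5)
Your two-ingredient plan mirrors the paper's proof: iterate the nilpotent-reduction bound to control $|G_{aN}|_{v,Gauss}$ with $N=n\kappa_v$, then divide by the $q$-factorial asymptotic $\lim_m|[m]_q^!|_v^{1/m}=|[\kappa_v]_q|_v^{1/\kappa_v}$, and your treatment of the factorial asymptotic is essentially Lemma~\ref{lemma:qfactorials}. Where you go astray is the first ingredient. In the $q$-Leibniz identity $G_{(a+1)N}=\sum_{k=0}^{aN}\binom{aN}{k}_q\,\sgq^k\l(\dq^{aN-k}G_N\r)G_k$, the $k=0$ summand equals $\dq^{aN}(G_N)$, whose Gauss norm is bounded only by $|\varpi_v|_v$ uniformly in $a$; so the proposed bookkeeping (top-index summand supplies the extra factor, remaining summands controlled by a crude exponential bound $|G_j|\le C_0^j$) cannot yield $|G_{(a+1)N}|_{v,Gauss}\le|\varpi_v|_v^{a+1}$, nor even $\limsup_a|G_{aN}|_{v,Gauss}^{1/a}\le|\varpi_v|_v$. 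The multiplicative estimate $|G_{sN}|_{v,Gauss}\le|\varpi_v|_v^{s}$ is exactly the non-obvious input here, supplied by the argument of \cite[Lemma~5.1.2]{DVInv}; you cite it, but your outline does not reproduce its mechanism, and a reader following your sketch would get stuck.

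A smaller point: the aside about a ``Legendre correction in mixed characteristic'' is a red herring. A cyclotomic place of $K/k(q)$ has residue field containing $k$, hence of the same characteristic as $k$, so there is no mixed-characteristic case in this setting; indeed Lemma~\ref{lemma:qfactorials} tacitly uses $|s|_v=1$ for positive integers $s$, which presupposes $\mathrm{char}\,k=0$.
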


\begin{proof}
The Leibniz formula (\cf \cite[Lemma 5.1.2]{DVInv} for a detailed proof in a
quite similar situation) implies that for any
$s\in\N$ we have:
$$
\l|G_{sn{\kappa_v}}(x)\r|_{v,Gauss}\leq |\varpi_v|_v^s\,.
$$
Since $\l|G_1(x)\r|_{v,Gauss}\leq 1$, for any $m\in\N$ we have:
$$
\l|G_{[m]}(x)\r|_{v,Gauss}
\leq
\frac{\l|G_{\l[\frac{m}{n\kappa_v}\r]n\kappa_v}(x)\r|_{v,Gauss}}{|[m]_q^!|_v}
\leq
\frac{\l|\varpi_v\r|_v^{\l[\frac{m}{n\kappa_v}\r]}}{|[m]_q^!|_v}\,,
$$
where $\l[\frac{m}{n\kappa_v}\r]=\max\{a\in\Z: a\leq \frac{m}{n\kappa_v}\}$.
The following lemma on the estimate of $[m]_q^!$
allows to conclude.
\end{proof}

\begin{lemma}\label{lemma:qfactorials}
For $v\in\cC$ we have $|[m]_q|_v=|[\kappa]_q|_v$ if $\kappa_v\vert m$ and $|[m]_q|_v=1$ otherwise.
Therefore:
$$
\lim_{m\to\infty}|[m]_q^!|_v^{1/m}=|[\kappa_v]_q|_v^{1/\kappa_v}\,.
$$
\end{lemma}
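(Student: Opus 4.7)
My plan is to decompose $[m]_q$ using cyclotomic polynomials and then observe that at a cyclotomic place $v$ only one cyclotomic factor contributes to the norm. Concretely, starting from the classical identity $q^m-1=\prod_{d\mid m}\Phi_d(q)$ and dividing by $q-1=\Phi_1(q)$, one obtains
$$
[m]_q=\prod_{\substack{d\mid m\\ d>1}}\Phi_d(q),
$$
valid for $m\geq 1$. The edge case $\kappa_v=1$, in which $\Phi_1(q)=q-1$ itself has negative $v$-valuation, must be treated separately: there $[m]_q\equiv m\pmod{\varpi_v}$, and since the setup is in characteristic zero one has $|m|_v=1$, giving directly $|[m]_q|_v=1=|[1]_q|_v$ for all $m$, which is consistent with the statement.

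For $\kappa_v>1$ I would then use that the image $q_v$ of $q$ in the residue field $k_v$ is a primitive $\kappa_v$-th root of unity, so $\Phi_d(q_v)=0$ if and only if $d=\kappa_v$. Consequently $|\Phi_d(q)|_v=1$ for every divisor $d$ of $m$ with $d\neq\kappa_v$, while $|\Phi_{\kappa_v}(q)|_v<1$. Plugging this into the factorization: if $\kappa_v\nmid m$, the factor $\Phi_{\kappa_v}(q)$ does not occur and $|[m]_q|_v=1$; if $\kappa_v\mid m$, it occurs exactly once and $|[m]_q|_v=|\Phi_{\kappa_v}(q)|_v$. Setting $m=\kappa_v$ yields $|[\kappa_v]_q|_v=|\Phi_{\kappa_v}(q)|_v$, so the two values coincide and the first claim follows.

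For the asymptotic formula, multiplying the norms over $k=1,\dots,m$ gives
$$
|[m]_q^!|_v=\prod_{k=1}^m|[k]_q|_v=|[\kappa_v]_q|_v^{\lfloor m/\kappa_v\rfloor},
$$
since exactly the indices divisible by $\kappa_v$ contribute. Taking the $1/m$-th power and letting $m\to\infty$ gives the claimed limit $|[\kappa_v]_q|_v^{1/\kappa_v}$. The argument is essentially a routine tracking of cyclotomic factors; the only (mild) obstacle is the $\kappa_v=1$ case, which must be handled by direct reduction modulo $\varpi_v$ rather than via the cyclotomic factorization.
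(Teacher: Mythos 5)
Your proof is correct but takes a genuinely different route from the paper's. The paper writes the Euclidean division $m = s\kappa_v + r$ and expands $[m]_q = 1 + q + \cdots + q^{m-1}$ into blocks of length $\kappa_v$, obtaining $[m]_q = (1 + q^{\kappa_v} + \cdots + q^{\kappa_v(s-1)})[\kappa_v]_q$ when $r=0$ and a unit-plus-multiple-of-$[\kappa_v]_q$ when $r>0$, then reduces modulo $\varpi_v$. You instead invoke the integral cyclotomic factorization $[m]_q = \prod_{d\mid m,\,d>1}\Phi_d(q)$ and localize the whole $v$-adic content of $[m]_q$ in the single factor $\Phi_{\kappa_v}(q)$. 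Both arguments ultimately rest on the same arithmetic fact, that a nonzero rational integer is a $v$-adic unit: the paper uses this when it passes from $1+q^{\kappa_v}+\cdots+q^{\kappa_v(s-1)}\equiv s$ to $|s|_v=1$, and you use it via $|m|_v=1$ and, implicitly, via $|\Phi_d(q_v)|_v=1$ when $q_v$ is a root of unity of order $\kappa_v\neq d$. (Neither proof works verbatim when $\mathrm{char}(k)=p>0$ and $p\mid s$, but this is a hypothesis the paper leaves implicit, so you inherit it fairly.) Your factorization approach is more structural — it makes visible that only one cyclotomic factor can vanish at $v$ — whereas the paper's block decomposition is more elementary and self-contained. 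One small simplification you could make: the separate treatment of $\kappa_v=1$ is unnecessary, because the factorization of $[m]_q$ involves only $\Phi_d$ with $d>1$, and for $\kappa_v=1$ one has $\Phi_d(q_v)=\Phi_d(1)\in\{1\}\cup\{\text{primes}\}$, which is a $v$-adic unit in characteristic zero; so the general argument already yields $|[m]_q|_v=1$ for all $m$ in that case.
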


\begin{proof}
Let $m\geq 2$ and $m=s\kappa_v+r$, with $r,s\in\Z$ and $0\leq r<\kappa_v$.
If $\kappa_v$ does not divide $m$, \ie if $r>0$, we have
$$
[m]_q=1+q+\dots+q^{m-1}
=[\kappa_v]_q+q^{\kappa_v}[\kappa_v]_q+\dots+q^{s\kappa_v}(1+q+\dots+q^{r-1})\,.
$$
Therefore $|[m]_q|_v=1$.
On the other hand, if $r=0$:
$$
[m]_q=\l(1+q^{\kappa_v}+\dots+q^{\kappa_v(s-1)}\r)[\kappa_v]_q\,.
$$
Since $q^{\kappa_v}\equiv 1$ modulo $\varpi_v$,
we deduce that $1+q^{\kappa_v}+\dots+q^{\kappa_v(s-1)}\equiv s$ modulo $\varpi_v$.
Therefore
$$
\l|[m]_q\r|_v=\l|s\r|_v\l|[\kappa_v]_q\r|_v=\l|[\kappa_v]_q\r|_v\,.
$$
This implies that
$$
\l|[m]_q^!\r|_v=\l|\kappa_v]_q\r|_v^{\l[\frac{m}{\kappa_v}\r]}\,,
$$
which allows to calculate the limit.
\end{proof}

We obtain the following characterization:

\begin{cor}\label{cor:nonnilpextimate}
The $q$-difference module $\cM=(M,\Sgq)$ has nilpotent reduction modulo $v\in\cC$
if and only if
\beq\label{eq:limitnilp}
\limsup_{m\to\infty}
\sup\l(1,\l|G_{[m]}\r|_{v,Gauss}\r)^{1/m}<
|[\kappa_v]_q|_v^{-1/\kappa_v}\,.
\eeq
\end{cor}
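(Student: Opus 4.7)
The plan is to prove the two directions separately, leveraging the Proposition just established together with Lemma \ref{lemma:qfactorials}.

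For the direct implication, assume $\cM$ has nilpotent reduction of some order $n$ modulo $v$. The Proposition immediately gives
$$
\limsup_{m\to\infty}\sup\l(1,\l|G_{[m]}\r|_{v,Gauss}\r)^{1/m}\leq
|\varpi_v|_v^{1/n\kappa_v}|[\kappa_v]_q|_v^{-1/\kappa_v}\,.
$$
Since $|\varpi_v|_v<1$, the factor $|\varpi_v|_v^{1/n\kappa_v}$ is strictly less than $1$, so the bound is strictly smaller than $|[\kappa_v]_q|_v^{-1/\kappa_v}$, which is what we want.

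For the converse, suppose \eqref{eq:limitnilp} holds and pick some $\rho$ with
$$
\limsup_{m\to\infty}\sup\l(1,\l|G_{[m]}\r|_{v,Gauss}\r)^{1/m}<\rho<|[\kappa_v]_q|_v^{-1/\kappa_v}\,.
$$
Then $\l|G_{[m]}\r|_{v,Gauss}\leq \rho^m$ for all $m$ sufficiently large. I would then specialize to indices of the form $m=s\kappa_v$ and use the identity $G_m=[m]_q^!\,G_{[m]}$ together with Lemma \ref{lemma:qfactorials}, which gives $\l|[s\kappa_v]_q^!\r|_v=\l|[\kappa_v]_q\r|_v^s$, to deduce
$$
\l|G_{s\kappa_v}\r|_{v,Gauss}\leq \l|[\kappa_v]_q\r|_v^s\,\rho^{s\kappa_v}=\l(\l|[\kappa_v]_q\r|_v\rho^{\kappa_v}\r)^s.
$$
The choice of $\rho$ forces $\l|[\kappa_v]_q\r|_v\rho^{\kappa_v}<1$, so for $s$ large enough the right hand side is at most $|\varpi_v|_v$. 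This is precisely condition (v) in the list of equivalent characterizations of nilpotent reduction given before the definition, so $\cM$ has nilpotent reduction modulo $v$, completing the proof.

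There is no real obstacle: the forward direction is an immediate consequence of the preceding Proposition together with the observation $|\varpi_v|_v<1$, and the backward direction is a direct calculation built on the exact cancellation between $[\kappa_v]_q$-factors in Lemma \ref{lemma:qfactorials} and the sharpness assumption in \eqref{eq:limitnilp}. The only minor subtlety to keep in mind is that the equivalent formulations of nilpotent reduction require \emph{some} $n$ with $\l|G_{n\kappa_v}\r|_{v,Gauss}\leq|\varpi_v|_v$, and the argument above indeed produces such an $n$ (any sufficiently large $s$).
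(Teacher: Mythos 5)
Your proof is correct and follows essentially the same route as the paper: the forward direction reads off the Proposition using $|\varpi_v|_v<1$, and the converse uses $G_m=[m]_q^!G_{[m]}$ together with Lemma \ref{lemma:qfactorials} to bound $|G_{n\kappa_v}|_{v,Gauss}$ and land in condition (v). Your version is actually slightly more careful than the paper's terse argument (which states an intermediate inequality $\limsup_m\sup(1,|G_m|_{v,Gauss})^{1/m}<1$ that is literally impossible because of the $\sup(1,\cdot)$ clamp and is presumably a typo for $\limsup_m|G_m|_{v,Gauss}^{1/m}<1$); your explicit specialization to $m=s\kappa_v$ avoids that slip.
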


\begin{proof}
One side of the implication is an immediate consequence of the proposition above.
On the other hand, the assumption \eqref{eq:limitnilp} implies that
$$
\limsup_{m\to\infty}
\sup\l(1,\l|G_m\r|_{v,Gauss}\r)^{1/m}<1\,,
$$
which clearly implies that there exists $n$ such that
$|G_{n\kappa_v}|_{v,Gauss}\leq |\varpi_v|_v$.
\end{proof}

We finally obtain the following proposition, that will be useful
in the proof of Theorem \ref{thm:regularity}:

\begin{prop}\label{prop:globalnilpotence}
Let $\cM$ be  $q$-difference module over $K(x)$ of type $G$.
Let $\cC_0$ be the set of $v\in\cC$ such that $\cM$ does not have nilpotent reduction modulo $v$.
Then
$$
\sum_{v\in\cC_0}\frac1{\kappa_v}<+\infty\,.
$$
In particular, $\cM$ has nilpotent reduction modulo $v$ for infinitely many
$v\in\cC$.
\end{prop}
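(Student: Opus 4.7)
The plan is to combine the local lower bound on the growth of $|G_{[m]}|_{v,Gauss}$ at each $v \in \cC_0$ provided by Corollary \ref{cor:nonnilpextimate} with the global finiteness $\sg^q_\cC(\cM) < +\infty$ coming from the type-$G$ hypothesis. Setting $\phi_v(n) := \log^+\sup_{s \leq n}|G_{[s]}|_{v,Gauss}$ and $a_v := -\log|[\kappa_v]_q|_v > 0$, Corollary \ref{cor:nonnilpextimate} gives $\limsup_m \phi_v(m)/m \geq a_v/\kappa_v$ for every $v \in \cC_0$. Since $\phi_v$ is the running maximum of the nonnegative sequence $\log^+|G_{[s]}|_{v,Gauss}$, and the recurrence $|G_{[s+1]}|_v \leq C_v |G_{[s]}|_v/|[s+1]_q|_v$ (with $C_v = \max(|G_1|_v,|q-1|_v^{-1})$) rules out too wild oscillations, I would first upgrade this limsup to a genuine limit $\phi_v(n)/n \to L_v \geq a_v/\kappa_v$. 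Fatou's lemma against the counting measure on $\cC_0$ would then yield
\[
\sum_{v \in \cC_0}\frac{a_v}{\kappa_v} \;\leq\; \sum_{v \in \cC_0} L_v \;\leq\; \liminf_n \frac{1}{n}\sum_{v \in \cC}\phi_v(n) \;\leq\; \sg^q_\cC(\cM) \;<\; +\infty.
\]

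To convert this into the desired estimate on $\sum 1/\kappa_v$, I would next use the Product-Formula normalization defining $\cP$ to compute that for all but finitely many $v \in \cC$---namely the unramified ones lying above a single cyclotomic factor $\Phi_{\kappa_v}(q) \in k[q]$---one has $|[\kappa_v]_q|_v = |\Phi_{\kappa_v}(q)|_v = d^{\phi(\kappa_v)f_v/[K:k(q)]}$, where $\phi$ denotes Euler's totient and $f_v \geq 1$ the residue degree. Since $\phi(\kappa_v) \geq 1$ and $f_v \geq 1$, this gives the uniform lower bound $a_v \geq c_0 := \log(1/d)/[K:k(q)] > 0$, whence $\sum_{v \in \cC_0}1/\kappa_v < +\infty$. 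For the ``in particular'' clause, for every integer $\kappa \geq 1$ any irreducible factor of $\Phi_\kappa(q)$ in $k[q]$ cuts out a place of $k(q)$ whose residue field contains a primitive $\kappa$-th root of unity, and so lifts to at least one $v \in \cC$ with $\kappa_v = \kappa$; hence $\sum_{v \in \cC}1/\kappa_v \geq \sum_{\kappa \geq 1}1/\kappa = +\infty$, and the finite convergence of $\sum_{v \in \cC_0}1/\kappa_v$ forces $\cC \smallsetminus \cC_0$ to be infinite.

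The technically most delicate point is the exchange of $\limsup_m$ with the infinite sum $\sum_{v \in \cC_0}$ in the Fatou step: $\limsup$ does not distribute over infinite sums, and the Galo\v ckin hypothesis only controls the $\limsup_n$ of $\sum_{v \in \cC}\phi_v(n)/n$, which is a priori weaker than $\sum_{v \in \cC}\limsup_n \phi_v(n)/n$. My approach is to exploit the monotonicity of each $\phi_v$ together with the local recurrence satisfied by $G_{[n]}$ to promote each pointwise $\limsup$ to a genuine $\lim$, after which Fatou applies in a standard way. The remainder of the proof is in spirit a $q$-analogue of the classical argument of \cite[IV]{AGfunctions}, with the cyclotomic invariant $1/\kappa_v$ playing the role of $1/p$ for an ordinary prime $p$.
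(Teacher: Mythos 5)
Your plan reproduces the paper's proof almost exactly: Corollary \ref{cor:nonnilpextimate} supplies the local lower bound, Fatou's lemma converts the global Galo\v ckin bound into summability of the local limits over $\cC_0$, and the observation that $K/k(q)$ has only finitely many ramified places yields a uniform positive lower bound on $-\log|[\kappa_v]_q|_v$. Your explicit argument for the ``in particular'' clause (that $\sum_{v\in\cC}1/\kappa_v$ diverges because every $\kappa$ prime to $\mathrm{char}\,k$ contributes at least one $v$ with $\kappa_v=\kappa$) is a welcome addition, as the paper leaves that step implicit.

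The one genuine gap is in your justification that $\phi_v(n)/n$ converges, which is precisely the content of the paper's Lemma \ref{lemma:limit}. You claim that monotonicity of $\phi_v$ together with the one-step recurrence $\l|G_{[s+1]}\r|_v\leq C_v\l|G_{[s]}\r|_v/|[s+1]_q|_v$ ``rules out too wild oscillations.'' It does not: these two facts only say that $\phi_v$ is nondecreasing with increments bounded by a constant, and such a sequence can easily have $\phi_v(n)/n$ oscillate between two distinct positive values (for instance, let $\phi_v$ grow linearly on dyadic blocks $[2^{2k},2^{2k+1}]$ and stay flat on $[2^{2k+1},2^{2k+2}]$). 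What the paper actually uses is the \emph{iterated} twisted Leibniz identity
$$
G_{[n+s]}(x)=\sum_{i+j=n}\frac{[n]_q^![s]_q^!}{[n+s]_q^!}\,\frac{\dq^j}{[j]_q^!}\bigl(G_{[s]}(q^ix)\bigr)\,G_{[i]}(x)\,,
$$
which, together with the estimate $|[m]_q^!|_v$ from Lemma \ref{lemma:qfactorials}, yields a \emph{quasi-subadditivity} bound $\phi_v(n+s)\leq\phi_v(n)+\phi_v(s)+O(1)$; Fekete's subadditivity lemma then gives the limit. Without this, the Fatou step genuinely fails, since Fatou needs the pointwise $\liminf$ and you have only a $\limsup$. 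A minor secondary quibble: your formula $|[\kappa_v]_q|_v=d^{\phi(\kappa_v)f_v/[K:k(q)]}$ relies on $\Phi_{\kappa_v}$ being irreducible over $k$ (true for $k=\Q$, false in general) and on a particular normalization convention; all that is actually needed, and all the paper uses, is that the cyclotomic factor has degree $\geq 1$ and that for the finitely many unramified places the normalization gives a lower bound $-\log|[\kappa_v]_q|_v\geq c_0>0$ independent of $v$.
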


The proof relies on the following lemma:

\begin{lemma}\label{lemma:limit}
The following limit exists:
$$
\lim_{n\to\infty}\frac 1n\lgp\l(\sup_{s\leq n}\l|G_{[s]}(x)\r|_{v,Gauss}\r)\,.
$$
\end{lemma}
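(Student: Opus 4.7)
The plan is to pass from the target sequence to a genuinely submultiplicative proxy via the matrices $A_n(x)$ of iterated $\sgq$-shifts, and then apply Fekete's lemma.

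First I would record the cocycle identity $A_{m+n}(x) = A_m(q^n x)\,A_n(x)$, which follows from $Y(q^{m+n}x) = A_{m+n}(x)Y(x) = A_m(q^n x) A_n(x) Y(x)$ for a fundamental solution $Y$. At any $v$ with $|q|_v = 1$ (in particular at every $v\in\cC$), the Gauss norm is invariant under the substitution $x\mapsto q^n x$, so
\[
|A_{m+n}|_{v,Gauss} \leq |A_m|_{v,Gauss}\cdot|A_n|_{v,Gauss}.
\]
Fekete's lemma then gives the existence of $\ell := \lim_n (1/n)\log|A_n|_{v,Gauss}$, and consequently the existence of $\lim_n (1/n)\lgp \sup_{k\leq n}|A_k|_{v,Gauss}$.

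Next I would translate the result for $A_n$ into a statement about the $G_{[k]}$. The $q$-Taylor expansion of $Y(q^n x)$ about $x$ gives
\[
A_n(x) = \sum_{k=0}^n \binom{n}{k}_q\, q^{k(k-1)/2}(q-1)^k\,[k]_q^!\,x^k\,G_{[k]}(x),
\]
and the $q$-Gauss inversion formula writes each $q^{k(k-1)/2}(q-1)^k[k]_q^! x^k G_{[k]}(x)$ as a $\binom{k}{j}_q$-combination, with signs and $q$-powers, of $A_0(x),\ldots,A_k(x)$. At $v\in\cC$ every scalar coefficient in both expansions has $v$-norm at most $1$, so taking Gauss norms and suprema yields
\[
\sup_{k\leq n}|A_k|_{v,Gauss} = \sup_{k\leq n}\l|G_{[k]}\r|_{v,Gauss}\cdot|[k]_q^!|_v\cdot|q-1|_v^k,
\]
(the factor $|q-1|_v^k$ being $1$ whenever $\kappa_v>1$, and a bounded correction in the remaining case). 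This already gives the existence of $\lim_n (1/n)\lgp\sup_{k\leq n}|G_{[k]}|_{v,Gauss}\cdot|[k]_q^!|_v$.

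The remaining step is to remove the $|[k]_q^!|_v$ weight, and this is where the main technical work lies. Using the iterated Leibniz identity
\[
\binom{m+n}{m}_q G_{[m+n]}(x) = \sum_{k=0}^m \frac{(d_q^k G_{[n]})(q^{m-k} x)}{[k]_q^!}\,G_{[m-k]}(x)
\]
together with the bound $|d_q^k F|_{v,Gauss}\leq |F|_{v,Gauss}$ (valid at $v\in\cC$ since $|[i]_q|_v\leq 1$), and its symmetric counterpart obtained by swapping $m,n$, one can combine the two resulting asymmetric estimates and the $q$-factorial asymptotic of Lemma~\ref{lemma:qfactorials} to obtain a submultiplicativity modulo a $v$-dependent constant for an auxiliary sequence of the form $V_n := \sup_{k\leq n}|G_{[k]}|_{v,Gauss}\cdot|[\kappa_v]_q|_v^{k/\kappa_v}$, namely $V_{m+n}\leq V_m V_n\cdot|[\kappa_v]_q|_v^{-1}$. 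The generalized Fekete lemma then gives the existence of $\lim_n (1/n)\log V_n$, from which the existence of the limit in the statement follows. The hard part is precisely this bridging: the naive bound based on the identity yields a factor $|[\kappa_v]_q|_v^{-\min(m,n)/\kappa_v}$ on the right-hand side, which would grow with $\min(m,n)$; the symmetrization together with the bound $V_n C_v^n \geq U_n$ is what reduces this to a genuine constant.
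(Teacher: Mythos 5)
The core tools in your last paragraph --- the iterated $q$-Leibniz identity for the $G_{[n]}$'s, Lemma~\ref{lemma:qfactorials}, and a Fekete subadditivity argument --- are exactly what the paper's proof relies on (it cites the same Leibniz formula as the ``key point'' and refers the rest to \cite[4.2.7]{DVInv}, with only the $q$-factorial estimate changed). But the concern you raise about the ``naive bound,'' and the workaround via $V_n$, point to a real gap in your write-up: you never get back from $V_n$ to the quantity in the statement.

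By Lemma~\ref{lemma:qfactorials}, at $v\in\cC$ one has $|[N]_q^!|_v=|[\kappa_v]_q|_v^{\lfloor N/\kappa_v\rfloor}$, so
$\l|\binom{m+n}{m}_q\r|_v=|[\kappa_v]_q|_v^{e}$ with
$e=\lfloor\frac{m+n}{\kappa_v}\rfloor-\lfloor\frac{m}{\kappa_v}\rfloor-\lfloor\frac{n}{\kappa_v}\rfloor\in\{0,1\}$.
Hence the inverse $q$-binomial contributes at most the \emph{constant} $|[\kappa_v]_q|_v^{-1}$, not something growing like $|[\kappa_v]_q|_v^{-\min(m,n)/\kappa_v}$. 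Taking Gauss norms in the Leibniz identity (using $|\binom jk_q|_v\le 1$ to control each $\dq^j/[j]_q^!$, together with the invariance of the Gauss norm under $x\mapsto q^ix$ since $|q|_v=1$) therefore gives directly
\[
\l|G_{[m+n]}\r|_{v,Gauss}\le |[\kappa_v]_q|_v^{-1}\,\l|G_{[n]}\r|_{v,Gauss}\,\sup_{i\le m}\l|G_{[i]}\r|_{v,Gauss}\,,
\]
and, decomposing any $N\le m+n$ as $a+b$ with $a\le m$, $b\le n$, one gets $U_{m+n}\le|[\kappa_v]_q|_v^{-1}U_mU_n$ for $U_n:=\sup_{k\le n}\l|G_{[k]}\r|_{v,Gauss}$. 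Since $U_n\ge|G_{[0]}|_{v,Gauss}=1$, Fekete applied to $n\mapsto\log\l(|[\kappa_v]_q|_v^{-1}U_n\r)$ yields the lemma at once. The detour through $V_n$ is thus unnecessary, and the final step ``$\lim_n\frac1n\log V_n$ exists, from which the limit in the statement follows'' is not justified: $V_n$ and $U_n$ differ by the geometrically decaying weight $|[\kappa_v]_q|_v^{k/\kappa_v}$, so they are not comparable up to bounded factors and the existence of one limit does not transfer to the other. The same incomparability is what makes the $A_n$-cocycle route in your first two paragraphs stall after producing a limit only for the weighted quantity $\sup_{k\le n}|G_{[k]}|_{v,Gauss}\,|[k]_q^!|_v$.
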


\begin{proof}
The proof is essentially the same as the proof of \cite[4.2.7]{DVInv}, a part from the
estimate of the $q$-factorials (\cf Lemma \ref{lemma:qfactorials} above).
The key point is the following formula:
$$
G_{[n+s]}(x)=\sum_{i+j=n}
\frac{[n]_q^![s]_q^!}{[s+n]^!}\frac{\dq^j}{[j]_q^!}\l(G_{[s]}(q^ix)\r)G_{[i]}(x)\,,
\,\forall\, s,n\in\N\,,
$$
obtained iterating the Leibniz rule.
\end{proof}

\begin{proof}[Proof of Proposition \ref{prop:globalnilpotence}]
The Fatou lemma, together with Lemma \ref{lemma:limit}, implies:
$$
\sum_{v\in\cC}\lim_{n\to\infty}\frac 1n\lgp\l(\sup_{s\leq n}\l|G_{[s]}(x)\r|_{v,Gauss}\r)
    \leq\liminf_{n\to\infty}\frac 1n\sum_{v\in\cC}\lgp\l(\sup_{s\leq n}\l|G_{[s]}(x)\r|_{v,Gauss}\r)
\leq\sg^{(q)}_\cC(\cM)<\infty\,.
$$
It follows from Corollary \ref{cor:nonnilpextimate} that:
$$
\sum_{v\in\cC_0}\frac{\lgp|[\kappa_v]_q|_v^{-1}}{\kappa_v}<\infty
$$
and hence that
$$
\sum_{v\in\cC_0}\frac{\log d^{-1}}{\kappa_v}<\infty\,,
$$
since only a finite number of places of $K/k(q)$ are ramified.
\end{proof}

\section{Proof of Theorem \ref{thm:regularity}}
\label{sec:proofregularity}

It is enough to prove that $0$ is a regular singular point for $\cM$,
the proof at $\infty$ being completely analogous.
\par
Let $r\in\N$ be a divisor of $\nu!$ and let $L$ be a finite extension of $K$ containing an
element $\widetilde q$
such that $\widetilde q^r=q$. We consider the field extension $K(x)\hookrightarrow L(t)$, $x\mapsto t^r$.
The field $L(t)$ has a natural structure of
$\widetilde q$-difference algebra extending the $q$-difference structure of $K(x)$.
Remark that:

\begin{lemma}\label{basis change}
The $q$-difference module $\cM$ is regular singular at $x=0$
if and only if
the $\widetilde q$-difference module $\cM_{L(t)}:=(M\otimes_{K(t)} L(t),
\Sg_{\wtilde q}:=\Sgq\otimes\sg_{\wtilde q})$
is regular singular at $t=0$.
\end{lemma}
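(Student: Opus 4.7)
The easy direction ($\cM$ regular singular at $0 \Rightarrow \cM_{L(t)}$ regular singular at $0$) I would handle by direct computation. Given a basis $\ul e$ of $M$ over $K(x)$ in which $\Sgq\,\ul e = \ul e\,A_1(x)$ with $A_1(x) \in Gl_\nu(K[[x]])$, the identity $\sg_{\wtilde q}|_{K(x)} = \sgq$ (checked on the generator: $\sg_{\wtilde q}(x) = \sg_{\wtilde q}(t^r) = \wtilde q^r t^r = qx$) makes the formula $\Sg_{\wtilde q} = \Sgq \otimes \sg_{\wtilde q}$ well defined on $M \otimes_{K(x)} L(t)$, and in the basis $\ul e \otimes 1$ its matrix is simply $A_1(t^r)$. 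Via the embedding $K[[x]] \hookrightarrow L[[t]]$, $x \mapsto t^r$, this matrix lies in $Gl_\nu(L[[t]])$, proving regularity of $\cM_{L(t)}$ at $t = 0$.

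For the converse, my plan is Galois descent. Using the easy direction I may freely enlarge $L$, so I assume $L/K$ is Galois and contains both $\wtilde q$ and all $r$-th roots of unity. Then $L(t)/K(x)$ is Galois with group $G$, the completion $L((t))/K((x))$ is Galois with the same group, and, because every $\gamma \in G$ fixes $\wtilde q$, each $\gamma$ commutes with $\sg_{\wtilde q}$ on $L(t)$ and hence with $\Sg_{\wtilde q}$ on $\cM_{L(t)}$. A regular basis of $\cM_{L(t)}$ spans a $\Sg_{\wtilde q}^{\pm 1}$-stable $L[[t]]$-lattice $\Lambda$ inside $\what M_{L((t))} := M \otimes_{K(x)} L((t))$. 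I would then average by
\[
\Lambda^* := \sum_{\gamma \in G} \gamma(\Lambda),
\]
a $G$-invariant $L[[t]]$-lattice that remains $\Sg_{\wtilde q}^{\pm 1}$-stable thanks to the commutation just established. Galois descent along $K((x)) \hookrightarrow L((t))$ then yields $(\Lambda^*)^G$, a $K[[x]]$-lattice in $(\what M_{L((t))})^G = \what M := M \otimes_{K(x)} K((x))$ stable under $\Sg_{\wtilde q}|_{\what M} = \Sgq$; this is precisely what regularity of $\cM$ at $0$ demands.

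The hard point is the commutation of $G$ with $\Sg_{\wtilde q}$, which is exactly why one must enlarge $L$ to contain $\wtilde q$: had $\wtilde q \notin L^G = K$, an element of $\mathrm{Gal}(L/K)$ would move $\wtilde q$ and conjugate $\sg_{\wtilde q}$ to a genuinely different semilinear automorphism, spoiling the stability of $\Lambda^*$. Once this is secured the rest of the argument is formal, and the final step of converting the $K[[x]]$-lattice $(\Lambda^*)^G$ into a $K(x)$-rational basis of $M$ realizing the regularity follows the same routine pattern as in the classical differential setting of \cite[\S13]{KatzTurrittin}. The argument at $\infty$ is obtained by a symmetric ramification at infinity and requires no new ideas.
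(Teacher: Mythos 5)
Your easy direction is fine, but the Galois descent for the converse has a genuine gap at the one place you flagged as the crux. The claim that ``every $\gamma\in G$ fixes $\widetilde q$'' is false. An element $\gamma\in G=\mathrm{Gal}(L(t)/K(x))$ fixes $K$ pointwise and fixes $x=t^r$, but the restriction map $G\to\mathrm{Gal}(L/K)$ is surjective, so $\gamma$ fixes $\widetilde q$ only if $\widetilde q\in K$ --- and if $\widetilde q\in K$ the lemma is essentially vacuous. Enlarging $L$ so that $\widetilde q\in L$ (which you do) merely ensures $\sigma_{\widetilde q}$ is defined on $L(t)$; it does nothing to stop $\mathrm{Gal}(L/K)$ from moving $\widetilde q$ inside $L$. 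Concretely, if $\gamma(\widetilde q)=\zeta\widetilde q$ with $\zeta^r=1$, then $\gamma\sigma_{\widetilde q}\gamma^{-1}=\sigma_{\zeta\widetilde q}$, a genuinely different semilinear operator, so $\gamma(\Lambda)$ is stable under $\Sg_{\zeta\widetilde q}$ rather than $\Sg_{\widetilde q}$, and the averaged lattice $\Lambda^*$ has no reason to be $\Sg_{\widetilde q}$-stable. Your own last paragraph acknowledges exactly this danger and then writes ``once this is secured'' without securing it.

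The argument can be repaired, but it takes more than you wrote. One way: replace $\Sg_{\widetilde q}$ by $\Sg_{\widetilde q}^{\,r}=\Sgq^r\otimes\sigma_q$ (here $\sigma_q\colon t\mapsto qt$), which \emph{does} commute with every $\gamma\in G$ because $q\in K$ is $G$-fixed. Since $\Lambda$ is $\Sg_{\widetilde q}^{\pm1}$-stable it is $\Sg_{\widetilde q}^{\pm r}$-stable, hence so is $\Lambda^*$, and $(\Lambda^*)^G$ is a $\Sgq^{\pm r}$-stable $K[[x]]$-lattice; one then still needs the (true but not free) lemma that $\Sgq^r$-regularity implies $\Sgq$-regularity, e.g.\ by a second averaging $\sum_{i=0}^{r-1}\Sgq^i(\Lambda^*)^G$. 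Alternatively, descend in two stages: along $L(t)/L(x)$, where the Kummer group $\mu_r$ fixes $L\ni\widetilde q$ pointwise and therefore does commute with $\Sg_{\widetilde q}$, and then along the scalar extension $L(x)/K(x)$, where the operator is just $\Sgq$ and descent of regularity is standard. Either route is fine, but as written your proof asserts a false commutation and draws a conclusion from it.

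For comparison, the paper's own proof is a one-liner that avoids descent entirely: pick a cyclic basis $\ul e$ of $\cM$; then $\ul e\otimes 1$ is cyclic for $\cM_{L(t)}$, and the associated $\widetilde q$-difference operator is obtained from the $q$-difference one by replacing each coefficient $a_i(x)$ with $a_i(t^r)$ while keeping the same $\sigma$-exponents. On the Newton--Ramis polygon at $0$ this multiplies all ordinates by $r$ and leaves the abscissas unchanged, so nonzero slopes are scaled but the zero slope is preserved, and regularity at $0$ is read off identically on both sides. Your approach is heavier machinery and, even once fixed, buys nothing over the paper's argument here --- though a correct descent argument would generalize to base changes that are not of the Kummer form $x=t^r$, which the Newton-polygon computation does not.
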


\begin{proof}
It is enough to notice that if $\ul e$ is a cyclic basis for $\cM$, then $\ul e\otimes 1$ is a cyclic basis
for $\cM_{L(t)}$ and $\Sg_{\widetilde q}(\ul e\otimes 1)=\Sgq(\ul e)\otimes 1$.
\end{proof}

The next lemma can be deduced from the formal classification of $q$-difference modules
(\cf \cite[Cor. 9 and \S9, 3)]{Praag}, \cite[Thm. 3.1.7]{sauloyfiltration}):

\begin{lemma}\label{rationalgauge}
There exist an extension $L(t)/K(x)$ as above, a basis $\ul f$ of the  $\widetilde q$-difference module $\cM_{L(t)}$,
such that $\Sg_{\widetilde q}\ul f=\ul f B(t)$, with $B(t)\in Gl_\mu(L(t))$, and
an integer $\ell$ such that
\beq\label{nonnilp}
\left\{%
\begin{array}{l}
    \ds B(t)=\frac{B_\ell}{t^\ell}+\frac{B_{\ell-1}}{t^{\ell-1}}+\dots\,, \hbox{as an element of $Gl_\mu(L((t)))$;} \\
    \hbox{$B_\ell$ is a constant non nilpotent matrix.} \\
\end{array}%
\right.
\eeq
\end{lemma}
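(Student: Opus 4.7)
The plan is to invoke the formal classification of $q$-difference modules to put $B(t)$ in block upper triangular form with respect to the Newton slope filtration of $\cM_{L(t)}$ at $t=0$, and then to observe that the most singular diagonal block forces the leading Laurent coefficient to be non-nilpotent.

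First I would recall that the Newton polygon of $\cM$ at $x=0$ has rational slopes whose denominators divide the rank $\nu$. Taking $r$ to be the least common multiple of these denominators, so that $r\mid\nu!$, and choosing a finite extension $L/K$ containing an $r$-th root $\widetilde q$ of $q$, the scalar extension $K(x)\hookrightarrow L(t)$, $x\mapsto t^r$, turns all slopes of $\cM_{L(t)}$ at $t=0$ into integers. By \cite[Thm.~3.1.7]{sauloyfiltration}, $\cM_{L(t)}$ then admits, over $L(t)$ itself, a canonical slope filtration
$$
0=\cM_0\subsetneq\cM_1\subsetneq\cdots\subsetneq\cM_h=\cM_{L(t)},
$$
with successive quotients $\cM_j/\cM_{j-1}$ pure of integer slope $s_j$, and $s_1<s_2<\cdots<s_h$.

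Next I would choose a basis $\ul f$ of $\cM_{L(t)}$ adapted to this filtration, ordered so that the pure component corresponding to the most negative slope $s_1$ (equivalently, to the largest pole of $B(t)$ at $t=0$) occupies the top-left block. In this basis $B(t)\in Gl_\mu(L(t))$ is block upper triangular with diagonal blocks $B_1(t),\dots,B_h(t)$, and the Praagman--Sauloy normal form on each pure block (\cf \cite[Cor.~9]{Praag}) gives a formal expansion
$$
B_j(t)=t^{-s_j}\bigl(C_j+O(t)\bigr),\qquad C_j\in Gl_{\mu_j}(L).
$$
Setting $\ell:=-s_1$, the Laurent expansion of $B(t)$ at $t=0$ then reads
$$
B(t)=\frac{B_\ell}{t^\ell}+\frac{B_{\ell-1}}{t^{\ell-1}}+\cdots,
$$
and $B_\ell$ is block upper triangular with top-left block equal to $C_1\in Gl_{\mu_1}(L)$ and zeros on the remaining diagonal blocks: the latter come from $B_j$ with $-s_j<\ell$ for $j\geq 2$, so they contribute only to lower-order terms. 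Since the eigenvalues of $C_1$ are all non-zero and lie in the spectrum of $B_\ell$, the matrix $B_\ell$ is not nilpotent.

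The main obstacle is to make sure the two ingredients used above are available in $L(t)$, rather than only formally over $L((t))$: namely, the rationality of the slope filtration, and the existence of a rational basis on each pure block in which the matrix has the form $t^{-s_j}(C_j+O(t))$ with $C_j$ constant and invertible. Both facts are extracted from the Praagman--Sauloy formal classification cited in the statement of the lemma; once they are on the table, the argument reduces to the elementary observation that a block upper triangular matrix with an invertible leading diagonal block cannot be nilpotent.
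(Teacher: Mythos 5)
Your overall plan matches the route the paper indicates (invoke the Praagman/Sauloy formal classification and then read off the non-nilpotency of the leading Laurent coefficient from a block triangular normal form), and your elementary observation that a block upper triangular matrix with an invertible diagonal block cannot be nilpotent is correct. However, there is a genuine gap in the step you flag yourself and then wave away: the claim that the slope filtration of $\cM_{L(t)}$ at $t=0$, and the pure normal form $t^{-s_j}(C_j+O(t))$ on each graded piece, are available \emph{over $L(t)$}. Neither Praagman's Corollary~9 nor Sauloy's Theorem~3.1.7 gives this. The canonical slope filtration is a filtration by submodules of $\cM\otimes L((t))$ (or, in the analytic setting, over germs at $0$); its steps are in general \emph{not} $L(t)$-rational, and there is no canonical way to ``adapt a basis of $\cM_{L(t)}$'' to a filtration that lives only over the completion. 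So the block upper triangular shape you want for $B(t)\in Gl_\mu(L(t))$ does not follow as stated.

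The missing ingredient is a truncation (or shearing) argument, which is what actually makes the passage from formal to rational work. Concretely: take any $L(t)$-basis with matrix $B_0(t)$, and let $P(t)\in Gl_\mu(L((t)))$ be the formal gauge transformation to the Praagman normal form $\wtilde B(t)=B_\ell t^{-\ell}+\dots$ with $B_\ell$ non-nilpotent. Replace $P$ by its Laurent-polynomial truncation $P_N(t)$ of high enough order $N$; then $\det P_N\neq 0$, so $P_N\in Gl_\mu(L(t))$, and $B(t):=P_N(\wtilde q t)^{-1}B_0(t)P_N(t)$ is rational and agrees with $\wtilde B(t)$ up to terms of order $t^{N-c}$ for a fixed constant $c$ depending only on the pole orders of $P$, $P^{-1}$ and $B_0$. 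For $N$ large this does not touch the coefficient at $t^{-\ell}$, so the leading Laurent coefficient of $B(t)$ is still $B_\ell$, hence non-nilpotent. This argument dispenses entirely with the rationality of the slope filtration, which is the assertion in your proof that cannot be ``extracted'' from the cited formal classification.

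One further small imprecision: you identify $\ell=-s_1$ with $s_1$ the smallest slope and justify ``$C_1$ invertible, other diagonal blocks zero'' by saying the other blocks have $-s_j<\ell$. That is fine, but you should note that the lemma allows $\ell\le 0$ (i.e.\ no pole at all, the regular singular case); the statement still makes sense because $B_\ell/t^\ell$ then denotes the first non-vanishing Taylor coefficient, and the same truncation argument applies.
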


\medskip
\begin{proof}[Proof of Theorem \ref{thm:regularity}]
Let $\cB\subset L(t)$ be a $\widetilde q$-difference algebra over the ring
of integers $\cO_L$ of $L$, of the same form
as \eqref{eq:algebraA}, containing the entries of $B(t)$.
Then there exists a
$\cB$-lattice $\cN$ of $\cM_{L(t)}$ inheriting the $\widetilde q$-difference
module structure from $\cM_{L(t)}$
and having the following properties:\\
1. $\cN$ has nilpotent reduction modulo infinitely many cyclotomic places of $L$;\\
2. there exists a basis $\ul f$ of $\cN$ over $\cB$ such that $\Sg_{\widetilde q}\ul f=\ul f B(t)$ and $B(t)$ verifies (\ref{nonnilp}).
\par
Iterating the operator $\Sg_{\widetilde q}$ we obtain:
$$
\Sg_{\widetilde q}^m(\ul f)
=\ul fB(t)B({\widetilde q}t)\cdots B({\widetilde q}^{m-1}t)
=\ul f\l({B_\ell^m\over\wtilde q^{\ell m(\ell m-1)\over 2}x^{m\ell}}+h.o.t.\r)
$$
We know that for infinitely many cyclotomic places $w$ of $L$,
the matrix $B(t)$ verifies
\beq\label{uniprel}
\l(B(t)B({\widetilde q}t)\cdots B({\widetilde q}^{\kappa_w-1}t)
-1\r)^{n(w)}\equiv 0\hbox{\ mod $\varpi_w$}\,,
\eeq
where $\varpi_w$ is an uniformizer of the place $w$,
$\kappa_w$ is the order $\widetilde q$ modulo $\varpi_w$
and $n(w)$ is a convenient positive integer.
Suppose that $\ell\neq 0$. Then $B_\ell^{\kappa_w}\equiv 0$ modulo $\varpi_w$,
for infinitely many $w$, and hence
$B_\ell$ is a nilpotent matrix, in contradiction with lemma \ref{rationalgauge}.
So necessarily $\ell=0$.

Finally we have $\Sg_{\widetilde q}(\ul f)=\ul f\l(B_0+h.o.t\r)$. It follows from (\ref{nonnilp})
that $B_0$ is actually invertible, which implies that $\cM_{L(t)}$ is regular singular at $0$.
Lemma \ref{basis change} allows to conclude.
\end{proof}

\section{Proof of Theorem \ref{thm:chudnovsky}}
\label{sec:proofchudnovsky}

\subsection{Idea of the proof.}

The hypothesis states that there exists a vector
$\vec y={}^t(y_0,\dots,y_{\nu-1})\in K[[x]]^\nu$, which is
solution of the $q$-difference system:
\beq\label{eq:sys}
\vec y(qx)=A_1(x)\vec y(x)\,,
\eeq
and therefore
of the systems
$\dq^n\vec y=G_n(x)\vec y$ and $\sgq^n\vec y=A_n(x)\vec y$ for any $n\geq 1$,
having the property that $y_0,\dots,y_{\nu-1}$ are linearly independent over $K(x)$.
We recall that
$$
G_{n+1}(x)=G_n(qx)G_1(x)+\dq G_n(x)
$$
and that
$$
A_{n+1}(x)=A_n(qx)A_1(x)\,.
$$

\par
Let us consider the operator:
$$
\La=A_1(x)^{-1}\circ\l(\dq-G_1(x)\r)\,.
$$
We know that there exists an extension $\cU$ of $K(x)$ (for instance the universal
Picard-Vessiot ring constructed in \cite[\S12.1]{vdPutSingerDifference})
such that we can find
an invertible matrix $\cY$ with coefficient in $\cU$
solution of our system $\dq\cY=G_1\cY$.
An explicit calculation shows that:
$$
\dq\circ\cY^{-1}=\l(\sgq\cY\r)^{-1}\l(\dq-G_1(x)\r)=\cY^{-1}A_1(x)^{-1}\l(\dq-G_1(x)\r)
$$
and therefore that:
\beq\label{eq:LambdaSolution}
\La^n=\cY\circ\dq^n\circ\cY^{-1}\,,\
\hbox{for all integers $n\geq 0$.}
\eeq
We set ${n\choose i}_q=\frac{[n]_q^!}{[i]_q^![n-1]_q^!}$, for any pair of integers
$n\geq i\geq 0$.
The twisted $q$-binomial formula shows that $\l|{n\choose i}_q\r|_v\leq 1$
for any $v\in\cP_f$.
\par
The proof of Theorem \ref{thm:chudnovsky} is based on the following $q$-analogue of \cite[VI, \S1]{AGfunctions}:

\begin{prop}\label{prop:Gn}
There exist $\a_0^{(n)},\dots,\a_n^{(n)}\in K$
such that for all $\vec P\in K[x]^\nu$ and all $n\geq 0$
we have:
\beq
G_{[n]}\vec P=
\sum_{i=0}^n \frac{(-1)^i}{[n]_q^!}{n\choose i}_q\a_i^{(n)}
\dq^{n-i}\circ A_i(x)\La^i(\vec P)\,,
\eeq
with $|\a_i{(n)}|_v\leq 1$, for any $v\in\cP_f$ and  $n\geq i\geq 0$.
\end{prop}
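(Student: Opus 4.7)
The plan is to verify the identity by a direct computation in the universal Picard-Vessiot extension $\cU$ of $K(x)$, in which a fundamental matrix $\cY$ with $\dq\cY = G_1\cY$ and $\sgq\cY = A_1\cY$ is available. Since $\cY$ is invertible in $\cU$, the identity will be established for arbitrary $\vec P \in \cU^\nu$ (via the substitution $\vec P = \cY \vec Q$, $\vec Q \in \cU^\nu$), and in particular for $\vec P \in K[x]^\nu$. I will exhibit the $\alpha_i^{(n)}$ as explicit Laurent monomials in $q$.

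First I would compute, using $\Lambda^n = \cY\circ\dq^n\circ\cY^{-1}$, $A_i\cY = \sgq^i\cY$ and $G_n\cY = \dq^n\cY$,
\[
A_i \Lambda^i \vec P = \sgq^i(\cY)\,\dq^i \vec Q, \qquad G_n \vec P = \dq^n(\cY)\,\vec Q.
\]
Then I expand $\dq^{n-i}(\sgq^i(\cY)\,\dq^i \vec Q)$ by the iterated $q$-Leibniz rule $\dq^m(fg) = \sum_k {m\choose k}_q \sgq^{m-k}(\dq^k f)\,\dq^{m-k} g$, the commutation $\dq^k\sgq^i = q^{ki}\sgq^i\dq^k$, and $\dq^k\cY = G_k\cY$. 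After the change of index $l = n-k$ this gives
\[
\dq^{n-i}(A_i \Lambda^i \vec P) = \sum_{l=i}^n q^{i(n-l)}{n-i \choose n-l}_q\,\sgq^l(\dq^{n-l}\cY)\,\dq^l \vec Q.
\]
With the ansatz $\alpha_i^{(n)} = q^{{i+1 \choose 2}-in}$, and the standard identity ${n\choose i}_q{n-i\choose n-l}_q = {n\choose l}_q{l\choose i}_q$, the coefficient of $\sgq^l(\dq^{n-l}\cY)\,\dq^l \vec Q$ on the right-hand side of the proposed formula collapses to
\[
\frac{1}{[n]_q^!}\,{n\choose l}_q \sum_{i=0}^l (-1)^i q^{\binom{i}{2}}{l\choose i}_q (q^{1-l})^i.
\]
By the $q$-binomial theorem $\sum_{i=0}^l (-1)^i q^{\binom{i}{2}}{l\choose i}_q x^i = \prod_{k=0}^{l-1}(1-x q^k)$ evaluated at $x = q^{1-l}$, the product contains the factor $1-q^0 = 0$ so this vanishes for $l\ge 1$, and equals $1/[n]_q^!$ for $l = 0$. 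Since the left-hand side $G_{[n]}\vec P = \dq^n(\cY)\,\vec Q/[n]_q^!$ is exactly the $l=0$ contribution, the identity is verified.

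For the integrality, by the very definition of $\cP_f$ every $v \in \cP_f$ corresponds to an irreducible factor of $k[q]$ distinct from $q$, so $|q|_v = 1$ and the Laurent monomial $\alpha_i^{(n)}$ satisfies $|\alpha_i^{(n)}|_v = 1 \leq 1$ for every $n \ge i \ge 0$. I expect the main technical obstacle to be the $q$-bookkeeping in the iterated Leibniz expansion: each time $\dq$ crosses $\sgq^i$ it produces a factor $q^i$, and these powers have to combine with the $q$-binomials to produce the clean form $q^{i(n-l)}{n-i\choose n-l}_q$. Once this expansion is in hand the $q$-binomial theorem collapse is painless.
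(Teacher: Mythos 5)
Your proof is correct and follows the same overall decomposition as the paper: expand $\dq^{n-i}\circ A_i\circ\Lambda^i$ by the iterated twisted Leibniz rule, collect coefficients of $\sgq^l(\dq^{n-l}\cY)\dq^l\vec Q$, and reduce to the condition
$$\sum_{i=0}^l(-1)^i\binom{l}{i}_q q^{(n-l)i}\a_i^{(n)}=\begin{cases}1&l=0,\\0&l\geq 1,\end{cases}$$
which (after the change of variable $l=n-j$) is exactly the triangular linear system the paper writes down. Where you diverge is in how that system is handled: the paper solves it recursively, obtaining $\alpha_0^{(n)}=1$ and expressing $\alpha_k^{(n)}$ in terms of $\alpha_0^{(n)},\dots,\alpha_{k-1}^{(n)}$, with the integrality $|\alpha_k^{(n)}|_v\leq 1$ proved by induction using $|q|_v=1$ and $|\binom{k}{i}_q|_v\leq 1$. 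You instead exhibit the explicit closed form $\alpha_i^{(n)}=q^{\binom{i+1}{2}-in}$ and verify the vanishing for $l\geq 1$ by the Gaussian $q$-binomial theorem $\sum_{i=0}^l(-1)^iq^{\binom{i}{2}}\binom{l}{i}_qx^i=\prod_{k=0}^{l-1}(1-xq^k)$ at $x=q^{1-l}$, where the factor $1-q^0$ kills the product. This is a genuine strengthening: the $\alpha_i^{(n)}$ are shown to be Laurent monomials in $q$, so $|\alpha_i^{(n)}|_v=1$ at every $v\in\cP_f$ is immediate with no induction, and one moreover sees that the $\alpha_i^{(n)}$ are independent of $K$ and lie in $\Z[q,q^{-1}]$. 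I checked your formula against the paper's recursion for small $i$ ($\alpha_0^{(n)}=1$, $\alpha_1^{(n)}=q^{1-n}$, $\alpha_2^{(n)}=q^{3-2n}$) and it agrees.
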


\begin{proof}
The iterated twisted Leibniz Formula (\cf for instance \cite[1.1.8.1]{DVInv})
$$
\dq^n(fg)=\sum_{j=0}^n{n\choose j}_q\sgq^j\l(\dq^{n-j}(f)\r)\dq^j(g)\,,
\,\forall f,g\in\cU\,
$$
implies
$$
\begin{array}{rcl}
\lefteqn{\ds\sum_{i=0}^n{(-1)^i\over[n]_q^!}
    {n\choose i}_q\a_i^{(n)}
    \dq^{n-i}\circ A_i(x)\circ\La^i(\vec P)}\\ \\
&=&\ds\sum_{i=0}^n {(-1)^i\over[n]_q^!}{n\choose i}_q\a_i^{(n)}
    \dq^{n-i}\circ\sgq^i(\cY)\circ
    \dq^i\circ\cY^{-1}(\vec P)\\ \\
&=&\ds\sum_{i=0}^n {(-1)^i\over[n]_q^!}{n\choose i}_q\a_i^{(n)}
    \sum_{j=0}^{n-i}{n-i\choose j}_qq^{ij}\sgq^{n-j}\l(\dq^j(\cY)\r)
    \circ\dq^{n-j}\circ\cY^{-1}(\vec P)\\ \\
&=&\ds\sum_{j=0}^n\l(\sum_{i=0}^{n-j} {(-1)^i\over[n]_q^!}{n\choose i}_q
    {n-i\choose j}_qq^{ij}\a_i^{(n)}\r)\sgq^{n-j}\l(\dq^j(\cY)\r)
    \circ\dq^{n-j}\circ\cY^{-1}(\vec P)\\ \\
&=&\ds\sum_{j=0}^n{1\over[n-j]_q^![j]_q^!}
    \l(\sum_{i=0}^{n-j}(-1)^i{n-j\choose i}_q
    q^{ij}\a_i^{(n)}\r)\sgq^{n-j}\l(\dq^j(\cY)\r)
    \circ\dq^{n-j}\circ\cY^{-1}(\vec P)\,.
\end{array}
$$
We have to solve the linear system:
$$
\sum_{i=0}^{n-j}(-1)^i{n-j\choose i}_q
q^{ij}\a_i^{(n)}=
\begin{cases}
1&\hbox{if $n=j$,} \\
0&\hbox{otherwise.}
\end{cases}
$$
For $n=j$ we obtain $\a_0^{(n)}=1$. We suppose that we have already determined
$\a_0^{(n)},\dots,\a_{k-1}^{(n)}$.
For $n-j=k$ we get:
$$
\sum_{i=0}^{k-1}(-1)^i{k\choose i}_q
q^{i(n-k)}\a_i^{(n)}=(-1)^{k+1}\a_k^{(n)}q^{k(n-k)}\,.
$$
This proves also that $|\a_k^{(n)}|_v\leq 1$ for ant $v\in\cP_f$.
\end{proof}

For all $\vec P={}^t(P_0,\dots,P_\nu-1)\in K[x]^\nu$ and $n\geq 0$ we set:
$$
\vec R_n={\La^n\over[n]_q^!}(\vec P)
$$
and:
$$
R^{<n>}=
\begin{pmatrix}
{n\choose n}_q\vec R_n & {n+1\choose n}_q\vec R_{n+1}
\dots & {n+\nu-1\choose n}_q\vec R_{n+\nu-1}
\end{pmatrix}\,.
$$
Therefore we obtain the identity:

\begin{cor}\label{cor:Rzero}
$$
G_{[n]}R^{<0>}=
\sum_{i=0}^n(-1)^i\a_i^{(n)}{\dq^{n-i}\over [n-i]_q^!}
\circ A_i(x)R^{<i>}
$$
\end{cor}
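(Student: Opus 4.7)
The plan is to deduce this corollary as a direct matrix reformulation of Proposition \ref{prop:Gn}; I would verify the claimed identity column by column.

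First, I would note that the formula in Proposition \ref{prop:Gn} is really an identity of operators on $\cU^\nu$, where $\cU$ is the universal extension used in its proof; the assumption $\vec{P}\in K[x]^\nu$ enters only to guarantee that all the expressions make sense, and inspection of the proof shows that the identity continues to hold when $\vec{P}$ is replaced by an arbitrary element of $\cU^\nu$. In particular, I would apply the proposition with $\vec{P}$ replaced by the vector $\vec{R}_j\in\cU^\nu$, for each fixed $j=0,\dots,\nu-1$.

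Next, using the relation
$$
\La^i(\vec{R}_j)=\frac{\La^{i+j}(\vec{P})}{[j]_q^!}=\frac{[i+j]_q^!}{[j]_q^!}\,\vec{R}_{i+j}
$$
together with the elementary identity of $q$-binomial coefficients
$$
\frac{1}{[n]_q^!}\binom{n}{i}_q\frac{[i+j]_q^!}{[j]_q^!}=\frac{1}{[n-i]_q^!}\binom{i+j}{i}_q,
$$
the substitution transforms the equation of Proposition \ref{prop:Gn} into
$$
G_{[n]}\vec{R}_j=\sum_{i=0}^n (-1)^i\a_i^{(n)}\frac{\dq^{n-i}}{[n-i]_q^!}\circ A_i(x)\,\binom{i+j}{i}_q\vec{R}_{i+j}.
$$

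Finally, by definition the $j$-th column of $R^{<i>}$ is exactly $\binom{i+j}{i}_q\vec{R}_{i+j}$; in particular, the $j$-th column of $R^{<0>}$ is $\vec{R}_j$ because $\binom{j}{0}_q=1$. Thus the displayed equation is precisely the equality of the $j$-th columns on each side of the claimed identity, and letting $j$ range over $0,\dots,\nu-1$ yields the corollary. I do not anticipate any real obstacle here: Proposition \ref{prop:Gn} provides the substantive content, and what remains is only the rearrangement of $q$-binomial coefficients above together with the column-by-column matching of the definition of $R^{<n>}$.
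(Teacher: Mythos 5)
Your proof is correct and is exactly the verification the paper leaves implicit (the paper states the corollary after defining $R^{<n>}$ with only ``Therefore we obtain the identity,'' signaling a routine computation). You correctly observe that the proof of Proposition \ref{prop:Gn} establishes an operator identity valid on $\cU^\nu$, apply it to each column $\vec R_j$ of $R^{<0>}$, use $\La^i(\vec R_j)=\tfrac{[i+j]_q^!}{[j]_q^!}\vec R_{i+j}$, and rearrange with the $q$-binomial identity $\tfrac{1}{[n]_q^!}\binom{n}{i}_q\tfrac{[i+j]_q^!}{[j]_q^!}=\tfrac{1}{[n-i]_q^!}\binom{i+j}{i}_q$ to match the columns of $R^{<i>}$; nothing is missing.
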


\begin{rmk}\label{rmk:programma}
In order to obtain an estimate of $\sg^{(q)}_{\cP_f}(\cM)$ we want to estimate
the matrices $G_{[n]}(x)$. The main point of the proof is the construction
of a vector $\vec P$, linked to the solution vector
$\vec y$ of \eqref{eq:sys}, such that $R^{<0>}$ is an invertible matrix.
\par
The proof is divided in step:
in step 1 we construct $\vec P$; in step 2 we prove that $R^{<0>}$ is invertible;
step 3 and 4 are devoted to the estimate of $G_{[n]}(x)$ and of $\sg^{(q)}_{\cP_f}(\cM)$.
\end{rmk}

\subsection{Step 1. Hermite-Pad\'e approximations of $\vec y$.}

We denote by $\deg$ the usual degree in $x$  and by $\ord$ the order at $x=0$.
We extend their definitions to vectors as follows:
$$
\begin{array}{l}
\ds\deg\vec P(x)=\sup_{i=0,\dots,\nu-1}\deg P_i(x)\,,
  \hbox{\ for all $\vec P=
  {}^t\l(P_0(x),\dots,P_{\nu-1}(x)\r)\in K[x]^\nu$.}\\ \\
\ds\ord\vec P(x)=\inf_{i=0,\dots,\nu-1}\ord P_i(x)\,,
  \hbox{\ for all $\vec P=
  {}^t\l(P_0(x),\dots,P_{\nu-1}(x)\r)\in K((x))^\nu$.}
\end{array}
$$
Moreover we set:
$$
\begin{cases}
 \l(\sum_{n\geq 0}\vec a_n x^n\r)_{\leq N}=
    \sum_{n\leq N}\vec a_n x^n\,,\\ \\
 \l(\sum_{n\geq 0}\vec a_n x^n\r)_{> N}=
 \sum_{n>N}\vec a_n x^n\,,
\end{cases}
\hskip 15pt \hbox{for all }
\sum_{n\geq 0}\vec a_n x^n\in K[[x]]^\nu\,.
$$
Finally, for $g(x)=\sum_{n\geq 0}g_n x^n\in K[x]$ and
for $\vec y=\sum_{n\geq 0}\vec y_n x^n\in K[[x]]^\nu$
we set:
$$
h(g,v)=\sup_{n}\lgp |g_n|_v\,,\ \forall\ v\in\cP\,,
$$
$$
h(g)=\sum_{v\in\cP}h(g,v)
$$
and
$$
\widetilde h(n,v)=\sup_{s\leq n}\lgp |\vec y_s|_v\,,
\ \forall\ v\in\cP\,,
$$
where $|\vec y_s|_v$ is the maximum of the $v$-adic absolute value of the
entries of $\vec y_s$.

The following lemma is proved in
\cite[VI, \S3]{AGfunctions} or \cite[Chap. VIII,\S3]{DGS} in the case of a number field.
The proof in the present case is exactly the same, apart from the fact that there are
no archimedean places in $\cP$:

\begin{prop}\label{prop:costruzioneg}
Let $\tau\in (0,1)$ be a constant and
$\vec y=\sum_{n\geq 0}\vec y_n x^n\in K[[x]]^\nu$.
For all integers $N>0$ there exists $\vec g(x)\in K[x]^\nu$
having the following properties:
\beq
\deg g(x)\leq N\,;
\eeq
\beq
\ord(g\vec y)_{\leq N}\geq 1+N+\l[N{1-\tau\over\nu}\r]\,;
\eeq
\beq
h(g)\leq const+{1-\tau\over \tau}\sum_{v\in\cP}\widetilde h\l(N+\l[N{1-\tau\over\nu}\r],v\r)\,.
\eeq
\end{prop}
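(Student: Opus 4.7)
The plan is to reproduce the classical Hermite--Pad\'e / Siegel-lemma construction as in \cite[VI, \S3]{AGfunctions} or \cite[Chap.\ VIII, \S3]{DGS}, essentially verbatim. The only structural simplification over the number-field setting is that all places in $\cP$ are non-archimedean, so the Minkowski-style geometry of numbers used in the arithmetic Siegel lemma is replaced by a clean pigeonhole argument on a finite quotient of $\cO_K^L$. The degree bound $\deg g(x) \leq N$ and the order bound are both built into the construction; the whole content of the proposition is thus the height estimate.

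First I would set up the linear system. Writing $g_i(x) = \sum_{j=0}^{N} c_{i,j}\, x^j$ introduces $L := \nu(N+1)$ unknown coefficients $c_{i,j} \in K$, and the condition that $\sum_i g_i y_i$ vanish to order at least $M := 1 + N + [N(1-\tau)/\nu]$ becomes the homogeneous linear system
$$
\sum_{i,\, j+s = m} c_{i,j}\, y_{i,s} = 0, \qquad 0 \leq m \leq M-1,
$$
in the $L$ unknowns $c_{i,j}$. Its coefficient matrix only involves the Taylor coefficients $y_{i,s}$ with $s \leq M - 1 = N + [N(1-\tau)/\nu]$; in particular its $v$-adic Gauss norm is bounded by $\exp \widetilde h(N + [N(1-\tau)/\nu], v)$, which matches the index appearing in the height estimate of the proposition.

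Next, I would apply the arithmetic Siegel lemma over the global field $K$: for a homogeneous linear system with $L > M$ unknowns and coefficients of bounded adelic height, there exists a nonzero solution $\vec c \in \cO_K^L$ whose global height is bounded by a constant (depending only on $K$, e.g.\ a discriminant term) plus an explicit multiple of $\sum_{v \in \cP} \widetilde h(M-1, v)$. Because there are no archimedean places in $\cP$, this is obtained by a direct box/pigeonhole argument: the image of a sufficiently large box in $\cO_K^L$ under the linear forms cannot be injective, and the difference of two colliding vectors is the desired solution. Packaging $\vec c$ as a vector $\vec g = (g_0, \ldots, g_{\nu-1}) \in \cO_K[x]^\nu$ of polynomials yields the object required by the proposition.

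The main obstacle, and really the only non-routine point, is the numerical calibration of the Siegel-lemma constants so as to produce exactly the factor $(1-\tau)/\tau$ in front of $\sum_v \widetilde h(N + [N(1-\tau)/\nu], v)$, rather than some other rational function of $N$, $\nu$ and $\tau$ coming from the ratio $M / (L - M)$. Since this calibration is identical to the number-field case (and only depends on linear algebra over $K$, not on any archimedean analysis), I would import the relevant inequality directly from \cite[VI, \S3]{AGfunctions} or \cite[Chap.\ VIII, \S3]{DGS} rather than redoing it; everything else is formal.
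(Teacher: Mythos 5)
Your overall strategy — reproduce the classical Siegel--lemma/Hermite--Pad\'e construction of \cite[VI, \S3]{AGfunctions} or \cite[Chap.\ VIII, \S3]{DGS} verbatim, observing that the absence of archimedean places in $\cP$ only simplifies the pigeonhole step — is exactly what the paper does (its own "proof" is a one-line citation of the same sources). However, you have set up the wrong linear system, and the error is not cosmetic: it is incompatible with the factor $(1-\tau)/\tau$ you are trying to produce. Despite the "$\vec g$" in the statement (a typographical slip: note that $h(g)$ is only defined for scalar polynomials, and that every later use of $g$ — e.g.\ $\vec P = (g\vec y)_{\leq N}$, $\dq^n g$ in Proposition~\ref{prop:formulatroncata} — treats $g$ as a single polynomial), the construction required here is a Pad\'e approximation of the \emph{second} kind: one scalar polynomial $g$ of degree $\leq N$, hence $L = N+1$ unknowns, subject to the $\nu\cdot\l[N\frac{1-\tau}{\nu}\r] \leq N(1-\tau)$ conditions that the coefficients of $g\,y_i$ in degrees $N+1,\dots,N+\l[N\frac{1-\tau}{\nu}\r]$ vanish for every $i$. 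The ratio of equations to free parameters is then $\leq \frac{N(1-\tau)}{\tau N + 1} < \frac{1-\tau}{\tau}$, which is where the stated constant comes from.

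Your version instead takes $\vec g = (g_0,\dots,g_{\nu-1})$, so $L = \nu(N+1)$ unknowns, and imposes vanishing of the \emph{single} scalar $\sum_i g_i y_i$ to order $M = 1+N+\l[N\frac{1-\tau}{\nu}\r]$ — a Pad\'e approximation of the \emph{first} kind. With these numbers, $L - M \approx N\big((\nu-1) - \tfrac{1-\tau}{\nu}\big)$: for $\nu = 1$ this is negative (the system is overdetermined and the pigeonhole argument produces nothing), and for $\nu \geq 2$ the exponent $M/(L-M)$ equals $(1-\tau)/\tau$ only for one special value of $\tau$. So the calibration you defer to the references in your last paragraph would not in fact close; the gap is in the choice of unknowns and equations, which must be corrected before the estimate on $h(g)$ can be read off.
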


From now on we will assume that
$\vec P(x)=(g\vec y)_{\leq N}$.

\begin{prop}\label{prop:formulatroncata}
Let $Q_1(x)\in{\mathcal V}_K[x]$ be a polynomial such that
$Q_1(x)A_1^{-1}(x)\in M_{\nu\times\nu}(K[x])$.
We set:
$$
Q_0=1\hbox{\ and\ }
Q_{n}(x)=Q_1(x)Q_{n-1}(qx)\,,\
\hbox{for all $n\geq 1$,}
$$
and
$$
t=\sup\l(\deg(Q_1(x)A_1^{-1}(x)),\deg Q_1(x)\r)\,.
$$
If $n\leq{N\over t}{1-\tau\over\nu}$, then
$$
\l(x^nQ_n(x){\dq^ng\over[n]_q^!}(x)\vec y(x)\r)_{\leq N+nt}=
x^nQ_n(x)\vec R_n\,.
$$
\end{prop}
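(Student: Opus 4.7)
The plan is to decompose $g\vec y = \vec P + \vec w$ with $\vec w := (g\vec y)_{>N}$, express $\vec R_n$ in terms of both pieces, and show that the $\vec w$-contribution disappears after truncation at degree $\leq N+nt$, while the $\vec P$-contribution is already a polynomial of degree $\leq N+nt$.

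\smallskip

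The key algebraic input is the identity $\La^n(g\vec y) = \dq^n(g)\vec y$ for any scalar $g\in K[[x]]$. Indeed, a direct $q$-Leibniz computation using $A_1 = 1+(q-1)xG_1$ gives $\La(f\vec y) = \dq(f)\vec y$ for any scalar $f$, and one iterates. (Equivalently, via \eqref{eq:LambdaSolution}, $\cY^{-1}\vec y$ is a $\dq$-constant vector.) With this,
\[
\vec R_n = \frac{\La^n(\vec P)}{[n]_q^!} = \frac{\dq^n(g)}{[n]_q^!}\vec y - \frac{\La^n(\vec w)}{[n]_q^!},
\]
so the statement reduces to two facts: $x^n Q_n \La^n$ should send polynomials of degree $\leq N$ to polynomials of degree $\leq N+nt$, and should send power series of order $\geq m$ to power series of order $\geq m$.

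\smallskip

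Both facts follow from one structural observation. Let $T := A_1^{-1}\sgq$, so that $\La = (T-1)/((q-1)x)$ and, iteratively, $T^i\vec v(x) = A_i(x)^{-1}\vec v(q^ix)$. The commutation $Tx^{-1} = q^{-1}x^{-1}T$ yields by induction
\[
x^n\La^n = \frac{q^{-\binom{n}{2}}}{(q-1)^n}\prod_{k=0}^{n-1}(T-q^k) = \sum_{i=0}^n c_i^{(n)}\,T^i,
\]
with scalars $c_i^{(n)}\in K$. Using $Q_n = \prod_{j=0}^{n-1}Q_1(q^jx)$, $A_i^{-1} = \prod_{j=0}^{i-1}A_1(q^jx)^{-1}$, and the commutativity of the scalar factors past the matrix factors,
\[
Q_n A_i^{-1} = \prod_{j=0}^{i-1}\bigl[Q_1(q^jx)A_1(q^jx)^{-1}\bigr]\cdot\prod_{j=i}^{n-1}Q_1(q^jx)
\]
is a polynomial matrix of degree $\leq nt$ for every $0\leq i\leq n$, which reduces both desired facts to term-by-term inspection of $Q_n A_i^{-1}\vec v(q^ix)$.

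\smallskip

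To conclude: applied to $\vec P$ (polynomial of degree $\leq N$), this makes $x^n Q_n\vec R_n$ polynomial of degree $\leq N+nt$, hence unaffected by truncation. Proposition \ref{prop:costruzioneg} gives $\ord\vec w \geq 1+N+[N(1-\tau)/\nu]$, and the hypothesis $n\leq (N/t)\cdot(1-\tau)/\nu$ translates into $nt\leq [N(1-\tau)/\nu]$; combined with the structural observation above, $x^n Q_n\La^n(\vec w)$ has order strictly greater than $N+nt$, so it vanishes after truncation at degree $\leq N+nt$. Truncating the identity
\[
x^n Q_n\frac{\dq^n g}{[n]_q^!}\vec y = x^n Q_n\vec R_n + \frac{x^n Q_n\La^n(\vec w)}{[n]_q^!}
\]
then yields the claim. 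The only calculational step is the explicit form of $x^n\La^n$, which is a short induction from $Tx^{-1}=q^{-1}x^{-1}T$; everything else is bookkeeping.
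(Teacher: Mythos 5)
Your proof is correct. In substance it takes the same route as the paper's: both rest on the factorization $\dq^n = \frac{(-1)^n}{(q-1)^n x^n}\prod_{k=0}^{n-1}(\sgq - q^k)$ — after conjugation by the fundamental matrix $\cY$ this is exactly your $x^n\La^n$, since $T = A_1^{-1}\sgq = \cY\circ\sgq\circ\cY^{-1}$ — and on the observation that each $Q_n A_i^{-1}$ is a polynomial matrix of degree at most $nt$. The organizational difference is that you isolate the identity $\La^n(g\vec y) = \dq^n(g)\vec y$ (equivalently, that $\cY^{-1}\vec y$ is $\dq$-constant) and split $g\vec y = \vec P + \vec w$ once and for all, so that the degree bound on $x^nQ_n\vec R_n$ and the order bound on $x^nQ_n\La^n(\vec w)$ come out of one expansion applied to the two pieces. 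The paper proves the same two estimates as separate lemmas and, instead of your abstract identity, introduces $\vec H_l = Q_l(x)\bigl(\sgq^l(g)\vec y - A_l^{-1}\sgq^l(\vec P)\bigr)$ and propagates the order bound through the recursion $A_1^{-1}Q_1\sgq(\vec H_l) = \vec H_{l+1}$ starting from $\vec H_0 = g\vec y - \vec P = \vec w$; this is precisely $Q_lT^l(\vec w)$ in disguise. Your packaging is a little slicker and makes the reusable fact $\La(f\vec y) = \dq(f)\vec y$ explicit, but the two proofs are computationally the same.
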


The proposition above is a consequence of the following lemmas:

\begin{lemma}\label{lemma:gradoR}
For each $n\geq 0$ we have:
\beq
x^nQ_n(x)\vec R_n(x)\in K[x]^\nu\,;
\eeq
\beq
\deg x^nQ_n(x)\vec R_n(x)\leq N+nt\,.
\eeq
\end{lemma}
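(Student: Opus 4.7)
The plan is to prove both assertions simultaneously by induction on $n$, after first rewriting $\La$ in a form that is easy to iterate. Using $G_1(x) = (A_1(x)-1)/((q-1)x)$ and $\dq \vec T = (\vec T(qx) - \vec T(x))/((q-1)x)$, a short direct calculation gives
$$
\La(\vec T)(x) \;=\; \frac{A_1(x)^{-1}\vec T(qx) - \vec T(x)}{(q-1)\,x},
$$
the cross contributions in $A_1(x)^{-1}\vec T(x)$ cancelling automatically. In this ``finite-difference'' form, $\La$ is manifestly well-suited to a recursive treatment that avoids unpacking the full twisted Leibniz calculus.

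Set $\vec V_n(x) := x^n Q_n(x)\La^n(\vec P)(x)$, so that $x^n Q_n(x)\vec R_n(x) = \vec V_n(x)/[n]_q^!$. The two claims of the lemma then amount to a single statement: $\vec V_n \in K[x]^\nu$ with $\deg \vec V_n \leq N + nt$. The base case $n=0$ is immediate, since $\vec V_0 = \vec P = (g\vec y)_{\leq N}$ has degree $\leq N$ by construction (Proposition \ref{prop:costruzioneg}).

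For the inductive step, I would write $\La^{n-1}(\vec P) = \vec V_{n-1}/(x^{n-1}Q_{n-1}(x))$, apply the formula above, and reduce over a common denominator. The key algebraic point is that the multiplicative recursion $Q_n(x) = Q_1(x) Q_{n-1}(qx)$ yields
$$
\frac{Q_n(x)}{Q_{n-1}(x)} \;=\; Q_1(q^{n-1} x) \;\in\; K[x],
$$
so after multiplying by $x^n Q_n(x)$ the residual $Q_{n-1}(x)$ in the denominator cancels cleanly and one arrives at the recursion
$$
\vec V_n(x) \;=\; \frac{Q_1(x)A_1(x)^{-1}}{(q-1)\,q^{n-1}}\,\vec V_{n-1}(qx) \;-\; \frac{Q_1(q^{n-1}x)}{q-1}\,\vec V_{n-1}(x).
$$
Both summands are now manifestly polynomial: $Q_1(x)A_1(x)^{-1}\in M_{\nu\times\nu}(K[x])$ has degree $\leq t$ by the definition of $t$, and $Q_1(q^{n-1}x)$ has degree $\leq\deg Q_1\leq t$. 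Using the inductive bound $\deg\vec V_{n-1}\leq N+(n-1)t$, both terms have degree at most $t + N + (n-1)t = N + nt$, which closes the induction.

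The only real obstacle is engineering the cancellation of $Q_{n-1}(x)$ from the denominator; once one uses the recursive definition of $Q_n$ in the divisibility form $Q_n/Q_{n-1} = Q_1(q^{n-1}x)$, the rest is routine bookkeeping on the degrees. This also explains \emph{a posteriori} why $t$ is taken as the maximum of $\deg Q_1$ and $\deg(Q_1A_1^{-1})$: both quantities enter the degree estimate at each inductive step.
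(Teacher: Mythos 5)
Your proof is correct, and it takes a genuinely different route from the paper's. The paper expands $\dq^n$ in full as $\frac{(-1)^n}{(q-1)^nx^n}\sum_{i=0}^n c_{i,n}\sgq^i$ and then, writing $A_i(x)^{-1}=A_1(x)^{-1}A_1(qx)^{-1}\cdots A_1(q^{i-1}x)^{-1}$, clears denominators term by term against $Q_n(x)=\prod_{j=0}^{n-1}Q_1(q^jx)$ to get both polynomiality and the degree bound in one shot. You instead rewrite $\La$ in its finite-difference form $\La(\vec T)=\frac{A_1(x)^{-1}\vec T(qx)-\vec T(x)}{(q-1)x}$ and run a one-step induction on $n$, which leads to the clean recursion
$$
\vec V_n(x)=\frac{Q_1(x)A_1(x)^{-1}}{(q-1)q^{n-1}}\vec V_{n-1}(qx)-\frac{Q_1(q^{n-1}x)}{q-1}\vec V_{n-1}(x)\,,
$$
from which both assertions fall out immediately. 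I verified the finite-difference identity for $\La$, the cancellation $Q_n(x)/Q_{n-1}(x)=Q_1(q^{n-1}x)$ (both proofs rely on the factorization $Q_n(x)=\prod_{j=0}^{n-1}Q_1(q^jx)$, though you only need the quotient), and the degree count, and all are correct. The $q$-factorial normalization $\vec R_n=\La^n(\vec P)/[n]_q^!$ is a nonzero scalar in $K$ and so is immaterial to polynomiality and degree, as you implicitly use. Your inductive version is arguably cleaner: it avoids the combinatorial coefficients $c_{i,n}$ entirely and makes transparent why $t=\sup(\deg(Q_1A_1^{-1}),\deg Q_1)$ is exactly the right quantity, since each of the two terms in the recursion picks up at most degree $t$. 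The paper's explicit-sum version has the advantage of exhibiting $x^nQ_n\vec R_n$ directly as a $K$-linear combination of the polynomials $Q_n(x)A_i^{-1}(x)\sgq^i(\vec P)$, a form it reuses in the following lemma; but for the present statement alone, your argument is self-contained and complete.
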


\begin{proof}
Clearly $\vec R_0=(g\vec y)_{\leq N}\in K[x]^\nu$.
We recall that there exist $c_{i,n}\in K$ such that
(\cf \cite[1.1.10]{DVInv}):
$$
\dq^n=\frac{(-1)^n}{(q-1)^nx^n}(\sgq-1)(\sgq-q)\cdots(\sgq-q^{n-1})
=\frac{(-1)^n}{(q-1)^nx^n}\sum_{i=1}^n c_{i,n}\sgq^i\,,
$$
for each $n\geq 1$. Therefore we obtain:
$$
\begin{array}{rcl}
x^nQ_n(x)\vec R_n
&=&\ds x^n Q_n(x)\cY{\dq^n\over[n]_q^!}\l(\cY^{-1}\vec P\r)\\
&=&\ds {Q_n(x)\cY\over[n]_q^!(q-1)^n}
    \sum_{i=0}^nc_{i,n}\sgq^i\l(\cY^{-1}\vec P\r)\\
&=&\ds {1\over[n]_q^!(q-1)^n}
    \sum_{i=0}^nc_{i,n}Q_n(x)A_i^{-1}(x)\sgq^i(\vec P)\,.
\end{array}
$$
Since $A_i(x)=A_1(q^{i-1}x)\cdots A_1(x)$, we conclude that $x^n Q_n(x)\vec R_n\in K[x]^\nu$ and:
$$
\begin{array}{rcl}
\deg x^n Q_n(x)\vec R_n
&\leq&\ds\sup_{i=0,\dots,n}\deg\l(Q_n(x)A_i^{-1}(x)\sgq^i(\vec P)\r)\\
&\leq&\ds\sup_{i=0,\dots,n}\l(\deg(Q_i(x)A_i^{-1}(x))+\deg Q_{n-i}(q^i x)
    +\deg\sgq^i(\vec P)\r)\\
&\leq& N+nt\,.
\end{array}
$$
\end{proof}

\begin{lemma}
$$
\ord\l(x^nQ_n(x)
{\dq^n(g)\over[n]_q^!}(x)\vec y(x)
-x^nQ_n(x)\vec R_n\r)\geq 1+N+\l[N {1-\tau\over\nu}\r]\,.
$$
\end{lemma}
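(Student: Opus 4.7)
The strategy is to identify $\dfrac{\dq^n(g)}{[n]_q^!}\vec y - \vec R_n$ with $\La^n/[n]_q^!$ applied to the Hermite--Padé remainder $(g\vec y)_{>N}=g\vec y-\vec P$, and then to observe that the operator $x^nQ_n(x)\La^n$ does not decrease order on formal vector series.

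For the first step, I would exploit that $\La$ satisfies the same twisted Leibniz rule as $\dq$. Starting from the definition $\La=A_1^{-1}\circ(\dq-G_1)$ (equivalently, from $\La=\cY\dq\cY^{-1}$ by \eqref{eq:LambdaSolution}), one checks directly that $\La(g\vec f)=\sgq(g)\La(\vec f)+\dq(g)\vec f$; iterating yields
$$\La^n(g\vec f)=\sum_{j=0}^n{n\choose j}_q\sgq^j(\dq^{n-j}g)\La^j\vec f.$$
Since $\vec y$ is a solution, $\La\vec y=0$, so every term with $j\ge 1$ vanishes when $\vec f=\vec y$, and we obtain $\La^n(g\vec y)=\dq^n(g)\cdot\vec y$. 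Combining with $\La^n\vec P=[n]_q^!\vec R_n$ and $\vec P=g\vec y-(g\vec y)_{>N}$ gives
$$\frac{\dq^n(g)}{[n]_q^!}\vec y-\vec R_n=\frac{1}{[n]_q^!}\La^n\bigl((g\vec y)_{>N}\bigr).$$

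For the order estimate, I would rerun, with $\vec P$ replaced by an arbitrary series $\vec h$, the computation already used in the proof of Lemma \ref{lemma:gradoR}. Writing $\La^n=\cY\dq^n\cY^{-1}$, expanding $\dq^n$ as $\frac{1}{x^n}$ times a finite $K$-linear combination of $\sgq^i$ (as done explicitly in that proof), and using the identity $\cY\,\sgq^i(\cY^{-1})=A_i(x)^{-1}$, one obtains
$$x^n\La^n\vec h=\sum_{i=0}^n c_{i,n}\,A_i(x)^{-1}\sgq^i\vec h,$$
for suitable $c_{i,n}\in K$. The crucial bookkeeping, already invoked in Lemma \ref{lemma:gradoR}, is that $Q_n(x)=\prod_{k=0}^{n-1}Q_1(q^kx)$ is precisely calibrated to clear every $A_i^{-1}$ simultaneously: with $B_1:=Q_1A_1^{-1}\in K[x]^{\nu\times\nu}$,
$$Q_n(x)A_i(x)^{-1}=\prod_{k=0}^{i-1}B_1(q^kx)\cdot\prod_{k=i}^{n-1}Q_1(q^kx)\in K[x]^{\nu\times\nu},\quad 0\le i\le n.$$
Hence $x^nQ_n(x)\La^n\vec h$ is a finite $K$-linear combination of polynomial matrices applied to $\sgq^i\vec h$, so its order is at least $\ord\vec h$. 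Applying this with $\vec h=(g\vec y)_{>N}$ and invoking the Hermite--Padé estimate $\ord\bigl((g\vec y)_{>N}\bigr)\ge 1+N+[N(1-\tau)/\nu]$ from Proposition \ref{prop:costruzioneg} closes the argument. The only non-routine point is recognizing that the shifted product defining $Q_n$ matches exactly the shifts appearing in $A_i^{-1}$; once this is observed no further analytic or arithmetic input enters.
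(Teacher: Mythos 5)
Your proof is correct and follows essentially the same route as the paper: expand $\dq^n$ in shifts $\sgq^i$, use $\cY\sgq^i\cY^{-1}=A_i(x)^{-1}$, observe that $Q_n(x)A_i(x)^{-1}$ is a polynomial matrix, and conclude from the Hermite--Pad\'e order bound $\ord\bigl((g\vec y)_{>N}\bigr)\geq 1+N+\bigl[N(1-\tau)/\nu\bigr]$. The only organizational variation is that you first package the whole difference as $\frac{1}{[n]_q^!}\La^n\bigl((g\vec y)_{>N}\bigr)$ via the Leibniz rule $\La^n(g\vec f)=\sum_j{n\choose j}_q\sgq^j(\dq^{n-j}g)\La^j\vec f$ together with $\La\vec y=0$, whereas the paper expands both terms separately, sets $\vec H_l=Q_l(x)\sgq^l(g)\vec y-Q_l(x)A_l^{-1}(x)\sgq^l(\vec P)$, and gets the order bound by induction on $l$ via the recursion $\vec H_{l+1}=A_1^{-1}(x)Q_1(x)\sgq(\vec H_l)$; the underlying computation is the same, and your reorganization is a slightly cleaner way to see it.
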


\begin{proof}
We have:
$$
\begin{array}{rcl}
\lefteqn{x^nQ_n(x){\dq^n(g)\over[n]_q^!}(x)\vec y(x)
    -x^nQ_n(x)\vec R_n}\\
&=&\ds {1\over[n]_q^!(q-1)^n}
    \sum_{l=0}^n c_{l,n}Q_n(x)\l(\sgq^l(g(x))\vec y(x)
    -\cY\sgq^l\l(\cY^{-1}\vec P\r)\r)\\
&=&\ds {1\over[n]_q^!(q-1)^n}
    \sum_{l=0}^n c_{l,n}Q_n(x)\l(\sgq^l(g(x))\vec y(x)
    -A_l^{-1}(x)\sgq^l(\vec P)\r)\,.
\end{array}
$$
Let $\vec H_{l}=Q_l(x)\sgq^l(g(x))\vec y(x)
-Q_l(x)A_l^{-1}(x)\sgq^l(\vec P)$.
Since:
$$
A_1^{-1}(x)Q_1(x)\sgq\l(\vec H_l\r)=\vec H_{l+1}\,,
$$
by induction on $l$ we obtain:
$$
\ord \vec H_l\geq\ord \vec H_{l-1}\geq
\ord\l(g(x)\vec y(x)-\vec P(x)\r)\geq 1+N+\l[N{1-\tau\over\nu}\r]\,.
$$
\end{proof}

\subsection{Step 2. The matrix $R^{<0>}$.}

\begin{thm}\label{thm:determinante}
Let
$\vec y(x)={}^t\l(y_0(x),\dots,y_{\nu-1}(x)\r)\in K[[x]]^\nu$
a solution vector of $\La Y=0$, such that
$y_0(x),\dots,y_{\nu-1}(x)$ are linearly independent over
$K(x)$.
Then there exists a constant $C(\La)$, depending only on $\La$, such that if
$$
\vec P={}^t(P_0\dots,P_{\nu-1})\in K[x]^\nu\smallsetminus \{\ul 0\}
$$
has the following property:
\beq\label{eq:determinante}
\ord \det
\begin{pmatrix}
P_i& P_j\\ y_i & y_j
\end{pmatrix}\geq
\deg\vec P(x)+C(\La)\,,
\forall i, j=0,\dots,\nu-1,
\eeq
then the matrix $R^{<0>}$ is invertible.
\end{thm}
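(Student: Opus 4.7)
The plan is to transport the problem to the universal Picard--Vessiot ring $\cU$ for the system $\dq Y = G_1 Y$ over $K(x)$. Choose a fundamental matrix $\cY\in GL_\nu(\cU)$ whose first column is $\vec y$. By the identity $\La^n = \cY\circ \dq^n \circ \cY^{-1}$ of \eqref{eq:LambdaSolution}, one obtains the factorisation
\[
R^{<0>} \;=\; \cY\cdot W, \qquad W_{ij}=\frac{\dq^j(U_i)}{[j]_q^!}, \qquad \vec U:=\cY^{-1}\vec P.
\]
Since $\cY$ is invertible, $R^{<0>}$ is invertible if and only if the $q$-Casoratian $\det W$ is nonzero, equivalently if and only if $U_0,\dots,U_{\nu-1}$ are linearly independent over the field $\cU^{\sgq}$ of $\sgq$-constants.

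Next I would unpack the $2\times 2$ Wronskian hypothesis. Picking an index $i_0$ for which $d_0:=\ord y_{i_0}$ is minimal among the nonzero components of $\vec y$, and setting $\alpha:=P_{i_0}/y_{i_0}\in K((x))$, the hypothesis rewrites as
\[
\vec P \;=\; \alpha\,\vec y + \vec E, \qquad \ord \vec E \;\ge\; \deg\vec P + C(\La) - d_0.
\]
Since $\cY^{-1}\vec y$ is the first standard basis vector, this gives $U_0 = \alpha + (\cY^{-1}\vec E)_0$ and $U_i=(\cY^{-1}\vec E)_i$ for $i\ge 1$. Letting $\rho$ be a bound, depending only on $\La$, for the orders of the poles at $0$ of the entries of $\cY^{-1}$, each $U_i$ with $i\ge 1$ satisfies $\ord U_i\ge \deg\vec P + C(\La) - d_0 - \rho$, while $U_0$'s leading behaviour is controlled by $\alpha$.

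The argument proceeds by contradiction. Suppose $\sum c_i U_i = 0$ for constants $c_i$ not all zero, and set $\phi^T := c^T\cY^{-1}$. Then $\phi^T$ is a nonzero dual solution (\ie $\sgq\phi^T = \phi^T A_1^{-1}$) satisfying $\phi^T\vec P = 0$. The twisted Leibniz rule yields $\dq(\phi^T\vec v)=\phi^T(\La\vec v)$ for every $\vec v\in\cU^\nu$, so $\phi^T\vec y$ is $\sgq$-invariant, hence a constant; concretely $\phi^T\vec y = c^T\cY^{-1}\vec y = c_0$. When $c_0\ne 0$, the relation $c_0\,\alpha = -\phi^T\vec E$ combined with the order estimate on $\vec E$ forces $\ord \alpha \ge \deg\vec P + C(\La) - d_0 - \rho$; choosing $C(\La)>\max(\rho,d_0)$ yields $\ord P_{i_0}>\deg P_{i_0}$, whence $P_{i_0}=0$, and the hypothesis applied with the index $i_0$ propagates to $P_j=0$ for every $j$, contradicting $\vec P\ne 0$.

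The main obstacle is the residual case $c_0 = 0$, in which $\phi$ lies in the $(\nu-1)$-dimensional $\sgq$-stable subspace of dual solutions orthogonal to $\vec y$, and the argument above yields only the weaker identity $\phi^T\vec E=0$. To close this case I would bring in the \emph{injectivity} of $\vec y$: since $y_0,\dots,y_{\nu-1}$ are $K(x)$-linearly independent, they span a $\sgq$-stable $K(x)$-subspace of $K((x))$ of the maximal dimension $\nu$; via a cyclic-vector construction one can then replace $\vec y$ by a suitable $K(x)$-linear combination of its iterates $\sgq^k\vec y$, reducing the problem inductively and chasing down to an iterated application of the first case. Enlarging $C(\La)$ to absorb the pole orders arising in this refinement should ultimately force $\phi=0$. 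This coupling of the Hermite--Pad\'e hypothesis with the injectivity of $\vec y$ is where I expect the main technical difficulty to lie.
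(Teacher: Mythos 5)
The proposal has two genuine gaps, one of which is fatal to the approach as stated.

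First, and most seriously, the quantity $\rho$ --- ``a bound, depending only on $\La$, for the orders of the poles at $0$ of the entries of $\cY^{-1}$'' --- is not well defined. The matrix $\cY$ lives in a universal Picard--Vessiot ring $\cU$, and its entries are in general not Laurent series at $0$: they may involve $q$-logarithms, $q$-characters $x^c$, and irregular $q$-exponentials, none of which has a pole order at $0$. One cannot even assume the system is regular singular there --- that is precisely one of the downstream \emph{conclusions} of Theorem~\ref{thm:chudnovsky} (Corollary~\ref{cor:regularity}). The paper's proof never needs any control on $\cY$ or $\cY^{-1}$ at $0$; it works exclusively with the $\nu\times\nu$ matrix $S^{<0>}=(\vec S_0,\dots,\vec S_{\nu-1})$ and the induced matrix $B=S_{I'J}S_{IJ}^{-1}$, both of which have entries in $K(x)$. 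The key tool that makes this work is Shidlovsky's Lemma: even though $B$ is a priori built from a possibly wild extension $\cU$, its entries are rational and their \emph{total degree} is bounded by a constant depending only on $\La$. Your proposal has no substitute for this step, and the attempt to replace it by a pole-order bound on $\cY^{-1}$ does not survive.

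Second, you yourself flag the $c_0 = 0$ case as unresolved, and the sketch you offer (``replace $\vec y$ by a suitable $K(x)$-linear combination of its iterates $\sgq^k\vec y$, reducing inductively'') does not clearly close it; the subspace of dual solutions orthogonal to $\vec y$ is exactly where the Hermite--Pad\'e hypothesis loses its grip in your set-up, and no amount of enlarging $C(\La)$ by itself repairs that. In the paper's proof, injectivity of $\vec y$ is used quite differently: after reducing to the rank-$r<\nu$ situation via the $q$-Wronskian lemma and a constant gauge, one forms the explicit quantity
$$
\det\bigl(T S_{IJ}^{-1}\bigr)=(-y_0)^{r-1}\bigl(y_{\nu-1}-y_0b_0-\dots-y_{r-1}b_{r-1}\bigr),
$$
which is \emph{nonzero} precisely because $y_0,\dots,y_{\nu-1}$ are $K(x)$-linearly independent. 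Its order is bounded above by a constant $C_1(\La)$ (via Shidlovsky applied to $B$) and bounded below using the Hermite--Pad\'e hypothesis and degree estimates from Lemma~\ref{lemma:gradoR}; choosing $C(\La)>(C_1-C_2)/r$ yields the contradiction in a single stroke, with no case split on whether $c_0$ vanishes. You would need to import some version of this determinantal bookkeeping, plus Shidlovsky's Lemma, to make your plan rigorous.
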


\begin{rmk}\label{rmk:ossP}
We remark that if we choose $g$ as in Propositions \ref{prop:costruzioneg}
and \ref{prop:formulatroncata} and $\vec P=(g\vec y)_{\leq N}$,
for $N>>0$ we have:
$$
N{1-\tau\over\nu}\geq C(\La)\,.
$$
Therefore the condition \eqref{eq:determinante} is satisfied since:
$$
\ord \det
\begin{pmatrix}
P_i & P_j\\ y_i& y_j
\end{pmatrix}=
\ord \det\begin{pmatrix}\l(gy_i\r)_{>N} & \l(gy_j\r)_{>N}\\ y_i& y_j\end{pmatrix}
\geq 1+N+N{1-\tau\over\nu}\,.
$$
\end{rmk}

We recall the Shidlovsky's Lemma that we will need on the proof of Theorem \ref{thm:determinante}.

\begin{defn}
We define total degree of ${f(x)\over g(x)}\in K(x)$
as:
$$
{\rm deg.tot}{f(x)\over g(x)}=\deg f(x)+\deg g(x)\,.
$$
\end{defn}

\begin{lemma}[Shidlovsky's Lemma; \cf for instance {\cite[Chap. VIII, 2.2]{DGS}}]
Let ${\mathcal G}/K(x)$ be a field extension and let $V\subset{\mathcal G}$
a $K$-vector space of finite dimension.
Then the total degree of the elements of $K(x)$
that can be written as quotient of two element of $V$ is bounded.
\end{lemma}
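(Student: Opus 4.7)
The plan is to reduce to the special case where $V$ is a finite-dimensional $K$-subspace of $K(x)$ itself, in which the bound follows from a common-denominator argument. The reduction is done by passing to a $K(x)$-basis of the $K(x)$-span of $V$ inside $\mathcal{G}$.

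First I would set $r:=\dim_{K(x)}(K(x)\cdot V)$ and pick a $K$-basis $e_1,\dots,e_d$ of $V$ in such a way that $e_1,\dots,e_r$ is a $K(x)$-basis of $K(x)\cdot V\subset\mathcal{G}$. For each $j>r$ write $e_j=\sum_{i\le r} b_{ji}(x)\,e_i$ with $b_{ji}\in K(x)$. Any $v=\sum_k c_k e_k\in V$ then rewrites in the $K(x)$-basis $e_1,\dots,e_r$ as $v=\sum_{i\le r}\phi_i(v)\,e_i$ with $\phi_i(v)=c_i+\sum_{j>r} c_j b_{ji}(x)$. Each $\phi_i:V\to K(x)$ is $K$-linear, with image contained in the finite-dimensional $K$-subspace $W_i\subset K(x)$ spanned by $1$ and the $b_{ji}(x)$ for $j>r$.

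Next, if $v_1,v_2\in V$, $v_2\neq 0$, and $f:=v_1/v_2\in K(x)$, then the identity $v_1=f\,v_2$ in $\mathcal{G}$ together with the $K(x)$-linear independence of $e_1,\dots,e_r$ forces $\phi_i(v_1)=f\,\phi_i(v_2)$ for every $i\le r$, so $f=\phi_i(v_1)/\phi_i(v_2)$ for any $i$ with $\phi_i(v_2)\neq 0$ (such $i$ must exist, since $v_2\neq 0$). Hence $f$ belongs to the set of quotients of two elements of $W_i$ for some $i\in\{1,\dots,r\}$, and it suffices to bound, for each fixed finite-dimensional $K$-subspace $W\subset K(x)$, the total degree of such quotients. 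Choosing $D(x)\in K[x]$ with $D\cdot W\subset K[x]$ and setting $N$ to be the maximum of $\deg(Dw_k)$ over a $K$-basis $w_1,\dots,w_m$ of $W$, every nonzero $w\in W$ equals $P_w/D$ with $\deg P_w\le N$. Consequently a quotient $\alpha/\beta$ with $\alpha,\beta\in W$ equals $P_\alpha/P_\beta$ for polynomials of degree $\le N$, which in lowest terms still has numerator and denominator of degree $\le N$, giving total degree at most $2N$. Taking the maximum of the resulting bounds over the finitely many $W_i$ produces the required uniform constant.

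The only mild obstacle lies in setting up the reduction cleanly: one must notice that the $K(x)$-linear relations among $e_1,\dots,e_d$ inside $\mathcal{G}$ (which exist because $\mathcal{G}$ is a field and $V$ is \emph{a priori} only a $K$-space) are exactly what allow a ratio $v_1/v_2\in K(x)$ to be rewritten as a ratio $\phi_i(v_1)/\phi_i(v_2)$ of two elements of a finite-dimensional $K$-subspace of $K(x)$. Once this is observed, the common-denominator step is essentially automatic.
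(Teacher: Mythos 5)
Your proof is correct. The paper itself offers no proof of this lemma (it simply cites Dwork--Gerotto--Sullivan), so there is nothing in the source to compare against; your argument is a clean, self-contained version of the standard one: reduce to the case $V\subset K(x)$ by extracting a $K(x)$-basis $e_1,\dots,e_r$ of $K(x)\cdot V$ from a $K$-basis of $V$, observe that the coordinate functionals $\phi_i$ land in a fixed finite-dimensional $K$-subspace $W_i\subset K(x)$, use $K(x)$-linear independence to transfer a ratio $v_1/v_2$ to a ratio $\phi_i(v_1)/\phi_i(v_2)$, and finish with the common-denominator bound. All the small points you need -- that a $K(x)$-basis can be extracted from a $K$-basis and reordered to the front while remaining a $K$-basis, that $v_2\neq 0$ forces some $\phi_i(v_2)\neq 0$, and that cancelling a gcd only lowers degrees -- are handled correctly.
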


\begin{proof}
[Proof of the Theorem \ref{thm:determinante}]
Let $\cY$ be an invertible matrix with coefficients in an extension $\cU$ of
$K(x)$ such that $\La\cY=0$ and let $C$ be the field of constant of
$\cU$ with respect to $\dq$.
The matrix
$$
R^{<0>}=\cY\l(\cY^{-1}\vec P, \dq\l(\cY^{-1}\vec P\r), \cdots ,
{\dq^{\nu-1}\over [\nu-1]_q^!}\l(\cY^{-1}\vec P\r)\r)
$$
is invertible if and only if
$$
{\rm rank}\l(\cY^{-1}R^{<0>}\r)={\rm rank}
\l(\cY^{-1}\vec P, \sgq\l(\cY^{-1}\vec P\r),
\dots, \sgq^{\nu-1}\l(\cY^{-1}\vec P\r)\r)
$$
is maximal.
Let us suppose that
$$
{\rm rank}\l(\cY^{-1}R^{<0>}\r)=r<\nu\,.
$$
Then the $q$-analogue of the wronskian lemma (\cf for instance \cite[\S1.2]{DVInv})
implies that there exists an invertible matrix
$M$ with coefficients in $C$ such that the first column of $M\cY^{-1}R^{<0>}$ is equal to:
$$
M\cY^{-1}\vec P=
{}^t\l(\widetilde w_0,\widetilde w_1,\dots,\widetilde w_{r-1},
0,\dots,0\r)\,.
$$
The matrix $\cY M^{-1}$ still verifies the $q$-difference
equation $\La Y=0$, so we will write
$\cY$ instead of $\cY M^{-1}$, to simplify notation.
We set:
$$
\vec S_n=\cY\circ\sgq^n\circ\cY^{-1}\vec P\,,\
\forall n\geq 0\,,
$$
$$
S^{<0>}=\l(\vec S_0 ,\dots,\vec S_{\nu-1}\r)
=\begin{pmatrix}
S_{IJ} & S_{IJ^\p}\\ S_{I^\p J}& S_{I^\p J^\p}
\end{pmatrix}
$$
and
$$
\cY^{-1}=\begin{pmatrix}\cY_{JL} & \cY_{JL^\p}\\
             \cY_{J^\p L}& \cY_{J^\p L^\p}
             \end{pmatrix}\,,
$$
where $I=J=L=\{0,1,\dots,r-1\}$ and $I^\p=J^\p=L^\p=\{r,\dots,\nu-1\}$.
We have:
$$
\begin{pmatrix}
\cY_{JL} & \cY_{JL^\p}\\
             \cY_{J^\p L}& \cY_{J^\p L^\p}
\end{pmatrix}
\begin{pmatrix}
S_{IJ} & S_{IJ^\p}\\ S_{I^\p J}& S_{I^\p J^\p}
\end{pmatrix}
=\begin{pmatrix}
\sgq^i\l(\cY^{-1}\vec P\r)
\end{pmatrix}_{i=0,\dots,\nu-1}
=\begin{pmatrix}
A\\ 0
\end{pmatrix}\,,
$$
with $A\in M_{r\times\nu}(K(x))$, and therefore:
$$
\cY_{J^\p L}S_{IJ}+\cY_{J^\p L^\p}S_{I^\p J}=0\,.
$$
Because of our choice of $\cY$,
the vectors $\vec S_0,\dots,\vec S_{r-1}$ are linearly independent
over $K(x)$,
so by permutation of the entries of the vector $\vec P$ we can suppose that
the matrix $S_{IJ}$ is invertible.
\par
Let $B=S_{I^\p J}S_{IJ}^{-1}$.
Since $S^{<0>}\in M_{\nu\times\nu}\l(K(x)\r)$ is independent of the
choice of the matrix $\cY$, the same is true for $B$.
The matrix $\cY$ is invertible and
$$
\begin{pmatrix}
\cY_{J^\p L}& \cY_{J^\p L^\p}
\end{pmatrix}=
\cY_{J^\p L^\p}
\begin{pmatrix}
-B& I_{\nu-r}
\end{pmatrix}\,,
$$
therefore the matrix $\cY_{J^\p L^\p}$ is also invertible
and we have:
$$
B=-\cY_{J^\p L^\p}^{-1}\cY_{J^\p L}\,.
$$
The coefficients of the matrix $B$ can be written in the form
$\xi/\eta$, where $\xi$ and $\eta$ are elements of the
$K$-vector space of polynomials of degree less or equal to
$\nu-r$ with coefficients in $K$ in the entries of the matrix $\cY$.
By Shidlovsky's lemma the total degree of the entries of the matrix
$B$ is bounded by a constant depending only on the
$q$-difference system $\La$.
\par
Let us consider the matrices:
$$
Q_1=\begin{pmatrix}
y_{\nu-1}&0&0&\cdots&0\\
y_1&-y_0&0&\cdots&0\\
y_2&0&-y_0&\cdots&0\\
\vdots&\vdots&\vdots&\ddots&\vdots\\
y_{r-1}&0&0&\cdots&-y_0
\end{pmatrix}
\in M_{r\times r}(K[[x]])
$$
and
$$
Q_2=\begin{pmatrix}
0&\cdots&0&-y_0\\
\vdots&\ddots&\vdots&\vdots\\
0&\cdots&0&0\\
0&\cdots&0&0\end{pmatrix}
\in M_{r\times\nu-r}(K[[x]])\,;
$$
we set:
$$
T=\begin{pmatrix}Q_1& Q_2\end{pmatrix}
\begin{pmatrix}S_{IJ}\\ S_{I^\p J}\end{pmatrix}
=\begin{pmatrix}Q_1& Q_2\end{pmatrix}
\begin{pmatrix}{\mathbb I}_r\\ B\end{pmatrix}
S_{IJ}\,.
$$
Let $(b_0,\dots,b_{r-1})$ be the last row of $B$.
We have:
$$
\begin{array}{rcl}
\det\l(TS_{IJ}^{-1}\r)
&=&\ds\det\l(Q_1+Q_2B\r)\\
&=&\ds\det\begin{pmatrix}
        y_{\nu-1}-y_0b_0&-y_0b_1 &-y_0b_2 &\cdots&-y_0b_{r-1}\\
        y_1             &-y_0    &0       &\cdots&0\\
        y_2             &0       &-y_0    &\cdots&0\\
        \vdots&\vdots   &\vdots  &\ddots  &\vdots \\
        y_{r-1}       &0       &0       &\cdots&-y_0
        \end{pmatrix}\\\\
&=&\ds\det\begin{pmatrix}
        y_{\nu-1}-y_0b_0-y_1b_1-\cdots-y_{r-1}b_{r-1}&0&0&\cdots&0\\
        y_1&-y_0&0&\cdots&0\\
        y_2&0&-y_0&\cdots&0\\
        \vdots&\vdots&\vdots&\ddots&\vdots\\
        y_{r-1}&0&0&\cdots&-y_0
        \end{pmatrix}\\\\
&=&\ds\l(-y_0\r)^{r-1}
        \l(y_{\nu-1}-y_0b_0-y_1b_1-\cdots-y_{r-1}b_{r-1}\r)\,.
\end{array}
$$
We notice that $\det\l(TS_{IJ}^{-1}\r)\neq 0$, since by hypothesis
$y_0,\dots,y_{\nu-1}$ are linearly independent over $K(x)$.
Our purpose is to find a lower and an upper bound  for $\ord\det\l(TS_{IJ}^{-1}\r)$.
\par
Since the total degree of the entries of $B$
is bounded by a constant depending only on $\La$, there exists
a constant $C_1$, depending on $\La$ and not on $\vec P$, such that:
$$
\ord \det\l(TS_{IJ}^{-1}\r)\leq C_1\,.
$$
Now we are going to determine a lower bound.
Let:
$$
\vec S_n={}^t\begin{pmatrix}S_{n,0},S_{n,2}, \dots , S_{n,\nu-1}\end{pmatrix}\,,
\hbox{\ pour tout $n\geq 0$;}
$$
then we have:
$$
S^{<0>}=\l(S_{i,j}\r)_{i,j\in\{0,1,\dots,\nu-1\}}\,;
$$
moreover we set:
$$
A_1^{-1}=\l( A_{i,j}\r)_{i,j\in\{0,1,\dots,\nu-1\}}\,.
$$
The elements of the first row of $T$ are of the form:
$$
\det\begin{pmatrix}y_{\nu-1} & S_{s,\nu-1}\\ y_0 & S_{s,0}\end{pmatrix}\,,
\hbox{\ pour $s=0,\dots,r-1$,}
$$
and the ones of the $i$-th row, for $i=1,\dots,r-1$:
$$
\det\begin{pmatrix}y_i & S_{s,i}\\ y_0 & S_{s,0}\end{pmatrix}\,,
\hbox{\ pour $s=0,\dots,r-1$.}
$$
Since $\vec S_{n+1}=A_1(x)^{-1}\sgq(\vec S_n)$ we have:
$$
\det\begin{pmatrix}y_i & S_{s+1,i}\\ y_j & S_{s+1,j}\end{pmatrix}=
\det\begin{pmatrix}y_i &\sum_{l} A_{i,l}\sgq(S_{s,l})\\
               y_j &\sum_{l} A_{j,l}\sgq(S_{s,l})\end{pmatrix}\,,
$$
therefore:
$$
\inf_{i,j=0,\dots,\nu-1}\ord
\det\begin{pmatrix}y_i & S_{s+1,i}\\ y_j & S_{s+1,j}\end{pmatrix}
\geq(s+1)\ord  A_1(x)^{-1}+
\inf_{i,j=0,\dots,\nu-1}\ord \det\begin{pmatrix}y_i &P_i\\ y_j &P_j\end{pmatrix}\,.
$$
Finally,
$$
\ord\det T\geq r(\nu-1)\ord  A_1(x)^{-1}+
r\inf_{i,j=0,\dots,\nu-1}\ord\det\begin{pmatrix}y_i &P_i\\ y_j &P_j\end{pmatrix}\,.
$$
By Lemma \ref{lemma:gradoR}, we obtain:
$$
\begin{array}{rcl}
\ord \det S_{I,J}
&\leq&\ds\deg\hbox{(numerator of $\det S_{I,J}$)}\\
&\leq&\ds\sum_{i=0}^{r-1}\deg\hbox{(numerator of $\vec S_i$)}\\
&\leq&\ds r\deg\vec P+t{r(r-1)\over 2}\,.
\end{array}
$$
We deduce that:
$$
\begin{array}{rcl}
\ord \det\l(TS_{I,J}^{-1}\r)
&\geq&\ds\ord \det\l(T\r)-\ord \det\l(S_{I,J}\r)\\
&\geq&\ds r\l((\nu-1)\ord  A_1(x)^{-1}+
    \inf_{i,j=0,\dots,\nu-1}\ord\det\begin{pmatrix}y_i &P_i\\ y_j &P_j\end{pmatrix}
    -\deg\vec P-t{(r-1)\over 2}\r)\\
&\geq&\ds r\l(\inf_{i,j}\ord \det\begin{pmatrix}y_i &P_i\\ y_j &P_j\end{pmatrix}
      -\deg  \vec P\r)+C_2\,,
\end{array}$$
where $C_2$ is a constant depending only on $\La$.
To conclude it is enough to choose a constant $C(\La)>{C_1-C_2\over r}$.
\end{proof}

\subsection{Step 3. First part of estimates.}

We set:
$$
\begin{array}{l}
\ds y=\sum_{n\geq 0}\vec y_n x^n\,,
  \hbox{\ with $\vec y_n\in K^\nu$}\,,\\
\ds \sg_f\l(\vec y\r)=\limsup_{n\rightarrow+\infty}{1\over n}
    \l(\sum_{v\in\cP_f}\sup_{s\leq n}
    \lgp\l |\vec y_s\r|_{v}\r)\,,\\
\ds \sg_\infty\l(\vec y\r)=\limsup_{n\rightarrow+\infty}{1\over n}
    \l(\sum_{v\in\cP_\infty}\sup_{s\leq n}
    \lgp\l |\vec y_s\r|_{v}\r)\,.
\end{array}
$$
We recall that we are working under the assumption:
$$
\sg(y)=\limsup_{n\rightarrow+\infty}{1\over n}
\l(\sum_{v\in\cP}\widetilde h(n,v)\r)
=\sg_f(\vec y)+\sg_\infty(\vec y)<+\infty
$$
and that we want to show that $\sg^{(q)}_{\cC}(\cM)\leq\infty$. Since
$\sg^{(q)}_{\cC}(\cM)\leq\sg^{(q)}_{\cP_f}(\cM)$, we will rather show that:
$$
\sg^{(q)}_{\cP_f}(\cM)=\limsup_{n\rightarrow+\infty}{1\over n}
\l(\sum_{v\in\cP_f}h(\cM,n,v)\r)< \infty\,,
$$
where:
$$
h(\cM,n,v)=\sup_{s\leq n}
\lgp\l|{G_n\over [n]_q^!}\r|_{v,Gauss}\,.
$$
\par
In the sequel $g$ will be a polynomial constructed as in
Proposition \ref{prop:costruzioneg}.  For such a choice of $g$ and for
$\vec P=(g\vec y)_{\leq N}$,  the hypothesis of Corollary \ref{cor:Rzero},
Proposition \ref{prop:costruzioneg} and Theorem \ref{thm:determinante}
are satisfied.

\begin{prop}\label{prop:primestime}
We have:
$$
\sg^{(q)}_{\cP_f}(\cM)\leq\sg_f(\vec y)\l({\nu^2 t\over 1-\tau}+t\r)+\Omega
+\sum_{v\in\cP_f}\lgp|A_1(x)|_{v,Gauss}\,,
$$
where:
$$
\Omega=\limsup_{n\rightarrow+\infty}{1\over n}
\l(\nu\sum_{v\in\cP_f}h(g,v)+
\sum_{v\in\cP_f}\lgp\l|\l(\prod_{i=1}^{\nu-1}Q_i(x)\r)
\Delta(x)\r|^{-1}_{v,Gauss}\r)\,.
$$
\end{prop}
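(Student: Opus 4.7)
The plan is to solve the identity of Corollary \ref{cor:Rzero} for $G_{[n]}$ by inverting $R^{<0>}$, and then extract place-by-place Gauss-norm bounds at every $v\in\cP_f$. Theorem \ref{thm:determinante} together with Remark \ref{rmk:ossP} ensures that, for $N$ taken sufficiently large (depending only on $\La$), $R^{<0>}$ is invertible, with determinant $\Delta(x)$. Multiplying the identity of Corollary \ref{cor:Rzero} on the right by $(R^{<0>})^{-1}=\adj(R^{<0>})/\Delta(x)$ yields
\beq\label{eq:planmain}
G_{[n]}(x) = \frac{1}{\Delta(x)}\Bigl(\sum_{i=0}^n (-1)^i \a_i^{(n)}\frac{\dq^{n-i}}{[n-i]_q^!}\circ A_i(x)\,R^{<i>}\Bigr)\adj(R^{<0>})\,.
\eeq

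Fixing $v\in\cP_f$ and taking Gauss norms, I would use: $|\a_i^{(n)}|_v\le 1$ from Proposition \ref{prop:Gn}; $|\binom{m}{j}_q|_v\le 1$, so that each operator $\dq^j/[j]_q^!$ preserves the Gauss norm on polynomials; the bound $|A_i|_{v,Gauss}\le |A_1|_{v,Gauss}^i\le |A_1|_{v,Gauss}^n$ for $i\le n$, obtained inductively from $A_{i+1}(x)=A_i(qx)A_1(x)$ and the $\sgq$-invariance of the Gauss norm; Proposition \ref{prop:formulatroncata}, valid as long as $n\le N(1-\tau)/(\nu t)$, which writes the polynomial $x^iQ_i(x)\vec R_i(x)$ of degree $\le N+it$ as a truncation of $x^iQ_i(x)(\dq^i g/[i]_q^!)\vec y$ and so controls the entries of $R^{<i>}$ in terms of $|g|_v$ and $\sup_{s\le N+(i+\nu-1)t}|\vec y_s|_v$ up to $Q$-factors; and $|\adj(R^{<0>})|_{v,Gauss}\le |R^{<0>}|_{v,Gauss}^{\nu-1}$ by the ultrametric bound on $(\nu-1)\times(\nu-1)$ minors. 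Combining these ingredients in \eqref{eq:planmain} and reorganizing the $|Q_j|_{v,Gauss}$ factors together with $|\Delta|_{v,Gauss}$ into the single combined denominator $|\Delta\prod_{j=1}^{\nu-1}Q_j|_{v,Gauss}$, one obtains
\begin{align*}
\lgp|G_{[n]}|_{v,Gauss}
&\le\lgp\Bigl|\Delta(x)\prod_{j=1}^{\nu-1}Q_j(x)\Bigr|_{v,Gauss}^{-1}+\nu\,h(g,v)+n\lgp|A_1|_{v,Gauss}\\
&\quad+(\nu-1)\sup_{s\le N+(\nu-1)t}\lgp|\vec y_s|_v+\sup_{s\le N+nt}\lgp|\vec y_s|_v.
\end{align*}

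Finally I would choose $N=\lceil n\nu t/(1-\tau)\rceil$, the smallest value allowed by Proposition \ref{prop:formulatroncata}, sum over $v\in\cP_f$, divide by $n$, and pass to $\limsup$. The $(\nu-1)$ factors coming from $\adj(R^{<0>})$ see only the small range $N+(\nu-1)t$ and contribute $(\nu-1)\cdot\tfrac{N}{n}\,\sg_f(\vec y)\to(\nu-1)\tfrac{\nu t}{1-\tau}\,\sg_f(\vec y)$, while the single factor $\sup_{s\le N+nt}$ from the dominant summand in $\sum_i$ contributes $\tfrac{N+nt}{n}\,\sg_f(\vec y)\to\bigl(\tfrac{\nu t}{1-\tau}+t\bigr)\sg_f(\vec y)$. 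Their sum is precisely $\bigl(\tfrac{\nu^2 t}{1-\tau}+t\bigr)\sg_f(\vec y)$, and the remaining groups of terms reassemble into $\Omega$ and $\sum_{v\in\cP_f}\lgp|A_1|_{v,Gauss}$.

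The main obstacle is numerical bookkeeping. First, the coefficient $\nu^2 t/(1-\tau)+t$ emerges only if the two supremum ranges $N+(\nu-1)t$ and $N+nt$ are kept separate at the $\limsup$ stage; uniformly bounding both by the larger one would produce the looser $\nu^2 t/(1-\tau)+\nu t$. Second, the precise form $|\Delta\prod_{j=1}^{\nu-1}Q_j|^{-1}_{v,Gauss}$ appearing in $\Omega$ requires carefully combining the denominators $x^iQ_i(x)$ of the various $\vec R_i$ with the determinant $\Delta(x)$, rather than estimating them separately.
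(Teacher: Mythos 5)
Your proposal is correct and tracks the paper's own proof almost line by line: you invert $R^{<0>}$ via the adjugate in Corollary \ref{cor:Rzero}, use $|\alpha_i^{(n)}|_v\le 1$ and $|\binom{m}{j}_q|_v\le1$, pass $|A_i|_{v,\mathrm{Gauss}}$ through $A_{i+1}=A_i(qx)A_1(x)$, bound $|\vec R_i|_{v,\mathrm{Gauss}}$ via Proposition \ref{prop:formulatroncata}, group the $Q_j$ with $\Delta$ to get the polynomial $\ol\Delta=x^{\binom\nu2}\bigl(\prod_{j=1}^{\nu-1}Q_j\bigr)\Delta$, then pick $N\sim n\nu t/(1-\tau)$ and take $\limsup$; the only caveat is that your choice $N=\lceil n\nu t/(1-\tau)\rceil$ should be bumped by a fixed constant so that Proposition \ref{prop:formulatroncata} applies up to index $n+\nu-1$ (this is exactly what the paper's condition \eqref{eq:sceltaNbis} does, and it does not affect the $\limsup$).
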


\begin{proof}
We fix $N,n>>0$ such that:
\beq\label{eq:sceltaN}
n+\nu-1\leq{N\over t}{1-\tau\over\nu}\,.
\eeq
Proposition \ref{prop:formulatroncata} and Corollary \ref{cor:Rzero}
implies that for all integers $s\leq n+\nu-1$, we have:
\beq\label{eq:condsceltaN}
\l(x^sQ_s(x)
{\dq^s g\over [s]_q^!}(x)\vec y(x)\r)_{\leq N+st}=
x^sQ_s(x)\vec R_s\,
\eeq
and:
$$
G_[s]=\sum_{i\leq s}
(-1)^i\a_i^{(n)}{\dq^{s-i}\over[s-i]_q^!}(A_i(x)R^{<i>})
\l(R^{<0>}\r)^{-1}\,.
$$
For all $v\in\cP_f$ we deduce:
$$
\begin{array}{rcl}
\l |G_[s]\r|_{v,Gauss}
&\leq&\ds \l(\sup_{i\leq s}\l|{\dq^{s-i}\over[s-i]_q^!}(A_i(x)R^{<i>})\r|_{v,Gauss}\r)
         \l|\adj R^{<0>}\r|_{v,Gauss}\l|\det R^{<0>}\r|^{-1}_{v,Gauss}\\
&\leq&\ds \l(\sup_{i\leq s}\l|A_i(x)R^{<i>}\r|_{v,Gauss}\r)
         \l|\adj R^{<0>}\r|_{v,Gauss}\l|\det R^{<0>}\r|^{-1}_{v,Gauss}\\
&\leq&\ds  C_{1,v}^s\l(\sup_{i\leq s+\nu-1}|\vec R_i|_{v,Gauss}\r)
         \l(\sup_{i\leq \nu-1}|\vec R_i|_{v,Gauss}\r)^{\nu-1}
        |\Delta(x)|_{v,Gauss}^{-1}\,,
\end{array}
$$
where we have set:
$$
C_{1,v}=\sup(1,|A_1(x)|_{v,Gauss})
$$
and
$$
\Delta(x)=\det R^{<0>}(x)\,.
$$
Taking into account our choice of $N$ and $n$ and \eqref{eq:condsceltaN},
for all $i\leq n+\nu-1$ we have:
$$
\begin{array}{rcl}
\l|\vec R_i\r|_{v,Gauss}
&\leq&\ds \l|Q_i(x)\r|^{-1}_{v,Gauss}\l|Q_i(x)\r|_{v,Gauss}\l|g\r|_{v,Gauss}
          \l|\l(\vec y\r)_{\leq N+it}\r|_{v,Gauss}\\
&\leq&\ds \l|g\r|_{v,Gauss}\l|\l(\vec y\r)_{\leq N+it}\r|_{v,Gauss}\,,
\end{array}
$$
therefore:
$$
\begin{array}{rcl}
\sup_{s\leq n}\lgp\l |G_[s]\r|_{v,Gauss}
&\leq&\ds n\log C_{1,v}+\widetilde h\l(N+(n+\nu-1)t,v\r)\\
&&\ds +(\nu-1)\widetilde h\l(N+(\nu-1)t,v\r)
         +\nu h(g,v)+\lgp\l|\Delta\r|^{-1}_{v,Gauss}\,.
\end{array}
$$
We set:
$$
\begin{array}{rcl}
\ol\Delta(x)
&=&\ds \vec R_0\wedge xQ_1(x)\vec R_1\wedge
          \cdots\wedge x^{\nu-1}Q_{\nu-1}(x)\vec R_{\nu-1}\\
&=&\ds x^{\nu\choose 2}\l(\prod_{i=1}^{\nu-1}Q_i(x)\r)\Delta(x)\,.
\end{array}
$$
The fact that $\l|Q_1(x)\r|_{v,Gauss}\leq 1$ and
$x^nQ^n(x)\vec R_n\in K[x]^\nu$, for all integers $n\geq 1$,
implies that $\l|\ol\Delta(x)\r|_{v,Gauss}\leq \l|\Delta(x)\r|_{v,Gauss}$,
with $\ol\Delta(x)\in K[x] $, and:
$$
\begin{array}{rcl}
\ds\sup_{s\leq n}\lgp\l |G_[s]\r|_{v,Gauss}
&\leq&\ds n\log C_{1,v}+\widetilde h\l(N+(n+\nu-1)t,v\r)\\
&&\ds +(\nu-1)\widetilde h\l(N+(\nu-1)t,v\r)
         +\nu h(g,v)+\lgp\l|\ol\Delta\r|^{-1}_{v,Gauss}\,.
\end{array}
$$
Taking into account condition \eqref{eq:sceltaN},
we fix a positive integer $k$ such that:
\beq\label{eq:sceltaNbis}
\begin{cases}
\displaystyle k>\frac{\nu(\nu-1)t}{1-\tau}\\
\displaystyle {N\over n}={\nu t\over 1-\tau}+{k-\veps_n\over n}
\hbox{\,, for some $\veps_n\in(0,1)$ fixed.}
\end{cases}
\eeq
Let us set:
$$
C_1=\sum_{v\in\cP_f}\lgp|A_1(x)|_v
$$
and
$$
\Omega=\limsup_{n\rightarrow+\infty}{1\over n}
\l(\nu\sum_{v\in\cP_f}h(g,v)+\sum_{v\in\cP_f}
\lgp \l|\ol\Delta(x)\r|^{-1}_{v,Gauss}\r)\,.
$$
We obtain:
$$
\begin{array}{rcl}
\sg^{(q)}_{\cP_f}(\cM)
&=&\ds\limsup_{n\rightarrow+\infty}{1\over n}
          \l(\sum_{v\in\cP_f\atop |1-q^\kappa|^{1/(p-1)}}\sup_{s\leq n}
          \lgp\l |{G_s\over[s]_q^!}\r|_{v,Gauss}\r)\\
&\leq&\ds\sg_f(\vec y)\limsup_{n\rightarrow+\infty}
         \l({N+(n+\nu-1)t\over n}+
         (\nu-1){N+(\nu-1)t\over n}\r)+C_1+\Omega\\
&\leq&\ds \sg_f(\vec y)\l({\nu t\over 1-\tau}+t+(\nu-1)
         {\nu t\over 1-\tau}\r)+C_1+\Omega\\
&\leq&\ds \sg_f(\vec y)\l({\nu^2 t\over 1-\tau}+t\r)+C_1+\Omega\,.
\end{array}
$$
\end{proof}

\subsection{Step 4. Conclusion of the proof of Theorem \ref{thm:chudnovsky}.}

\begin{lemma}\label{lemma:omega}
Let $\Omega$ be as in the previous proposition. Then:
$$
\Omega\leq{\nu^2 t\over 1-\tau}\sg_\infty(\vec y)+{\nu^2 t(\nu-1)\over 1-\tau}C_2+
\limsup_{n\rightarrow+\infty} {\nu\over n}h(q)\,,
$$
where
$$
C_2=\sum_{v\in\cP_\infty}\log(1+|q|_v)
$$
is a constant depending on the $v$-adic absolute value of $q$,
for all $v\in\cP_\infty$.
\end{lemma}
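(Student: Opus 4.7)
The plan is to split $\Omega$ into its two natural summands and control each via the product formula combined with a placewise Gauss‑norm estimate of $\ol\Delta(x)=x^{\binom{\nu}{2}}\prod_{i=1}^{\nu-1}Q_i(x)\Delta(x)$.

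First, $\nu\sum_{v\in\cP_f}h(g,v)\leq\nu h(g)=\nu\sum_{v\in\cP}h(g,v)$, which immediately contributes the last summand $\limsup\frac{\nu}{n}h(g)$ of the claimed inequality. The remaining task is to control $\sum_{v\in\cP_f}\lgp|\ol\Delta|^{-1}_{v,Gauss}$. Pick any nonzero coefficient $a\in K^\times$ of $\ol\Delta$; the product formula gives $\sum_v\log|a|_v=0$, and since $|\ol\Delta|_{v,Gauss}\geq|a|_v$ at every place $v$, summing produces $\sum_v\log|\ol\Delta|_{v,Gauss}\geq 0$, which rearranges into
\[
\sum_{v\in\cP_f}\lgp|\ol\Delta|^{-1}_{v,Gauss}\leq\sum_{v\in\cP}\lgp|\ol\Delta|_{v,Gauss}.
\]
It therefore suffices to bound the right-hand side.

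Combining the determinantal form $\ol\Delta=\det(x^iQ_i\vec R_i)_{0\leq i\leq\nu-1}$, the submultiplicativity of the Gauss norm and the truncation identity of Proposition~\ref{prop:formulatroncata}, I would derive
\[
|x^iQ_i\vec R_i|_{v,Gauss}\leq|Q_i|_{v,Gauss}\cdot|\dq^ig/[i]_q^!|_{v,Gauss}\cdot\exp\wtilde h(N+(\nu-1)t,v).
\]
At a finite place $v\in\cP_f$, $|q|_v=1$ and the elementary bound $|{k\choose i}_q|_v\leq 1$ force $|Q_i|_v\leq 1$ and $|\dq^ig/[i]_q^!|_v\leq|g|_{v,Gauss}$ for all but finitely many $v$ (the remaining ones contributing a uniform constant). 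At an infinite place $v\in\cP_\infty$, the coarser estimate $|{k\choose i}_q|_v\leq(1+|q|_v)^{i(k-i)}$ yields $|\dq^ig/[i]_q^!|_v\leq|g|_{v,Gauss}(1+|q|_v)^{(\nu-1)N}$ for $i\leq\nu-1$, together with an analogous bound for $|Q_i|_v$ whose $(1+|q|_v)$-exponent remains bounded in $N$. Multiplying the $\nu$ columns of $\ol\Delta$ accumulates an overall factor $(1+|q|_v)^{\nu(\nu-1)N}$ at the infinite places.

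Finally, summing over $v$, dividing by $n$, substituting $N/n\to\nu t/(1-\tau)$ from \eqref{eq:sceltaNbis} and taking $\limsup$, the three contributions of the claimed bound appear: the $(1+|q|_v)$-factor at infinite places integrates to $\frac{\nu^2 t(\nu-1)}{1-\tau}C_2$; the terms $\wtilde h(N+(\nu-1)t,v)$ summed over $\cP_\infty$ give $\frac{\nu^2 t}{1-\tau}\sg_\infty(\vec y)$; and the $h(g,v)$-terms together with the initial summand of $\Omega$ assemble into $\limsup\frac{\nu}{n}h(g)$. The principal difficulty is to show that only $\sg_\infty(\vec y)$, and not the full size $\sg_f(\vec y)+\sg_\infty(\vec y)$, survives in the limit: a direct bound on $\sum_{v\in\cP_f}\lgp|\ol\Delta|_{v,Gauss}$ would introduce a spurious $\sg_f(\vec y)$-term, which must be reabsorbed through a careful choice of the reference coefficient $a$ in the product-formula step, following the strategy of \cite[VI, \S4]{AGfunctions} transposed from the classical archimedean setting to the present ultrametric one.
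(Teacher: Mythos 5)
Your overall strategy — split $\Omega$ into the $h(g,v)$-part and the $\ol\Delta$-part, then transfer the finite-place estimate of $\ol\Delta$ to the infinite places via the Product Formula — is the right skeleton, and it is what the paper does. But the step that actually makes this work is missing from your argument, and you correctly sense this yourself in your final paragraph: a direct bound on $\sum_{v\in\cP}\lgp|\ol\Delta|_{v,Gauss}$ inevitably reintroduces $\sg_f(\vec y)$, because at finite places the only bound available for $|\vec R_i|_{v,Gauss}$ goes through $\widetilde h(N+(\nu-1)t,v)$, and summing those over $\cP_f$ produces $\sg_f(\vec y)$. Your suggestion of ``a careful choice of the reference coefficient $a$'' does not escape this: picking any single coefficient $a$ of $\ol\Delta$ gives $\sum_v\log|a|_v=0$ and $|a|_v\leq|\ol\Delta|_{v,Gauss}$, which leads to an estimate in terms of $\sum_v\lgp|\ol\Delta|_{v,Gauss}$ over \emph{all} places, exactly the quantity you want to avoid.

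The idea you are missing is to evaluate $\ol\Delta(x)\in K[x]$ at a \emph{root of unity} $\xi$ chosen so that $\ol\Delta(\xi)\neq 0$ and $Q_i(\xi)\neq 0$ for all $i$. Since $|\xi|_v=1$ at every place, one has $|\ol\Delta(\xi)|_v\leq|\ol\Delta|_{v,Gauss}$ for $v\in\cP_f$, hence $\lgp|\ol\Delta|^{-1}_{v,Gauss}\leq\lgp|\ol\Delta(\xi)|^{-1}_v$ there. Applying the Product Formula to the nonzero number $\ol\Delta(\xi)\in K$ then yields
$$
\sum_{v\in\cP_f}\lgp\l|\ol\Delta(x)\r|^{-1}_{v,Gauss}
\;\leq\;
\sum_{v\in\cP_f}\lgp\l|\ol\Delta(\xi)\r|^{-1}_v
\;\leq\;
\sum_{v\in\cP_\infty}\lgp\l|\ol\Delta(\xi)\r|_v\,,
$$
and the whole estimate now takes place at the \emph{infinite} places only. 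There one expands $Q_s(\xi)\vec R_s(\xi)$ using the truncation identity \eqref{eq:condsceltaN}, bounds the $q$-binomial coefficients by $\bigl((1+|q|_v)^{N}/\inf(|1-|q|_v|,|1-|q|_v^{\nu-1}|)\bigr)^{\nu-1}$ (a step you gestured at but in the finite-place context where it is unnecessary), and obtains a bound on $|\ol\Delta(\xi)|_v$ whose $\widetilde h$-terms are summed only over $\cP_\infty$; dividing by $n$, using $N/n\to\nu t/(1-\tau)$ from \eqref{eq:sceltaNbis}, and taking $\limsup$ produces exactly $\frac{\nu^2 t}{1-\tau}\sg_\infty(\vec y)+\frac{\nu^2t(\nu-1)}{1-\tau}C_2+\limsup\frac{\nu}{n}\sum_{v\in\cP_\infty}h(g,v)$, which combined with your first summand $\nu\sum_{v\in\cP_f}h(g,v)$ gives $\limsup\frac{\nu}{n}h(g)$. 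So the ``principal difficulty'' you flagged is real, and the resolution is not an adjustment of the reference coefficient but the evaluation-at-a-root-of-unity device.
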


\begin{proof}
Let $\xi$ a root of unity such that:
$$
\ol\Delta(\xi)\neq 0\neq Q_i(\xi)\ \forall\ i=0,\dots\nu-1\,.
$$
Since $|\ol\Delta(\xi)|_v\leq|\ol\Delta(x)|_{v,Gauss}$
for all $v\in\cP_f$, the Product Formula implies that:
$$
\sum_{v\in\cP_f}\lgp \l|\ol\Delta(x)\r|^{-1}_{v,Gauss}\leq
\sum_{v\in\cP_f}\lgp \l|\ol\Delta(\xi)\r|^{-1}_v\leq
\sum_{v\in\cP_\infty}\lgp \l|\ol\Delta(\xi)\r|_v\,.
$$
We recall that:
$$
\ol\Delta(x)= \det
\begin{pmatrix}
\vec R_0 &
Q_1(x)\vec R_1 & \cdots &
Q_{\nu-1}(x)\vec R_{\nu-1}
\end{pmatrix}
$$
and that for all $s\leq \nu-1$, \eqref{eq:condsceltaN} is verified.
Moreover we have:
$$
\begin{array}{rcl}
\ds Q_s(x){\dq^s(g)\over[s]_q^!}(x)\vec y(x)
&=&\ds\sum_{n\geq 0}\l(\sum_{i+j+h=n}(Q_s)_i
  \l({\dq^s(g)\over[s]_q^!}\r)_j\vec y_h\r)x^n\\
&=&\ds\sum_{n\geq 0}\l(\sum_{i+j+h=n}(Q_s)_i
  {s+j\choose j}_q g_{s+j}\vec y_h\r)x^n\,,
\end{array}
$$
where we have used the notation:
$$
\hbox{for all $P\in K[[x]]$ and for all $n\in\N$,
$P_n$ is the coefficient of $x^n$
in $P$.}
$$
We deduce that $Q_s(\xi)\vec R_s(\xi)$
is a sum of terms of the type:
$$
(Q_s)_i{s+j\choose j}_q g_{s+j}\vec y_h\xi^n
$$
with:
$$
\begin{array}{ll}
0\leq s\leq \nu-1\,,&
0\leq i\leq\deg Q_s(x)\,,\\
0\leq j\leq N\,,\ s+j\leq N\,,&
0\leq h\leq N+(\nu-1)t\,.
\end{array}
$$
For all $v\in\cP_\infty$ we obtain:
$$
\l|Q_s(\xi)\vec R_s(\xi)\r|_v
\leq c_v\l(\sup_{s\leq j\leq N}\l|{j\choose s}_q\r|_v\r)
      \l(\sup_{h\leq N+(\nu-1)t}|\vec y_h|_v\r)
      \l(\sup_{j\leq N}|g_j|_v\r)\,,
$$
with:
$$
c_v=\sup\l(1,\sup_{s=0,\dots,\nu-1\atop i=0,\dots,\deg Q_s}
|(Q_s(x))_i|_v\r)\,.
$$
Since $|q|_v\neq 1$, for all $v\in\cP_\infty$, we have:
$$
\begin{array}{rcl}
\ds\l|{j\choose s}_q\r|_v
&=&\ds\l|{(1-q^j)\cdots(1-q^{j-s+1})\over (1-q^s)\cdots(1-q)}\r|_v\\
&\leq&\ds {(1+|q|_v^j)\cdots(1+|q|_v^{j-s+1})\over |1-|q|_v^s|_v\cdots|1-|q|_v|_v}\\
&\leq&\ds\l\{\begin{array}{ll}
   \displaystyle {(1+|q|_v)^s\over 1-|q|_v^s}
   \leq \l({1+|q|_v\over 1-|q|_v}\r)^{\nu-1}
   &\hbox{ if $|q|_v<1$; }\\ \\
   \displaystyle \l({1+|q|_v^j\over |q|_v^s-1}\r)^s
   \leq \l({1+|q|_v^N\over |q|_v^{\nu-1}-1}\r)^{\nu-1}
   &\hbox{ if $|q|_v>1$;}
   \end{array}\r.
\end{array}
$$
hence:
$$
\begin{array}{rcl}
\ds\sup_{s=0,\dots,\nu-1\atop j=s,\dots N}\l|{j\choose s}_q\r|_v
&\leq&\ds \l({\sup(1+|q|_v,1+|q|_v^N)\over
    \inf(|1-|q|_v|,|1-|q|_v^{\nu-1}|)}\r)^{\nu-1}\\
&\leq&\ds {(1+|q|_v)^{N(\nu-1)}\over\inf(|1-|q|_v|, |1-|q|_v^{\nu-1}|)^{\nu-1}}
\end{array}
$$
We obtain the following estimate:
$$
\l|Q_s(\xi)\vec R_s(\xi)\r|_v
\leq c_v{\l(1+|q|_v\r)^{N(\nu-1)}\over\inf(|1-|q|_v|,|1-|q|_v^{\nu-1}|)^{\nu-1}}
\l(\sup_{h\leq N+(\nu-1)t}|\vec y_h|_v\r)
\l(\sup_{j\leq N}|g_j|_v\r)
\,.
$$
Finally we get:
$$
\l|\ol\Delta(\xi)\r|_v\leq c_v^\nu
{\l(1+|q|_v\r)^{N(\nu-1)\nu}\over\inf(|1-|q|_v|,|1-|q|_v^{\nu-1}|)^{(\nu-1)\nu}}
\l(\sup_{h\leq N+(\nu-1)t}|\vec y_h|_v\r)^\nu
\l(\sup_{j\leq N}|g_j|_v\r)^\nu
$$
and therefore:
$$
\begin{array}{rcl}
\lefteqn{\ds\sum_{v\in\cP_\infty}\lgp\l|\ol\Delta(\xi)\r|_v
\leq\ds const+N\nu(\nu-1) C_2}\\
&&\ds-\nu(\nu-1)\sum_{v\in\cP_\infty}\log\inf(|1-|q|_v|,|1-|q|_v^{\nu-1}|)^{\nu-1}\\
&&\ds+\nu\sum_{v\in\cP_\infty}h(g,v)
    +\nu\sum_{v\in\cP_\infty}\widetilde h\l(N+(\nu-1)t,v\r)\,.
\end{array}
$$
where:
$$
C_2=\sum_{v\in\cP_\infty}\log\l(1+|q|_v\r)\,.
$$
We recall that by \eqref{eq:sceltaNbis}, we have:
$$
\lim_{n\rightarrow+\infty}{N\over n}={t\nu\over 1-\tau}
$$
and:
$$
\lim_{n\rightarrow+\infty}{\log N\over n}=0\,.
$$
So we can conclude since:
$$
\begin{array}{rcl}
\lefteqn{\ds\limsup_{n\rightarrow+\infty}{1\over n}
\sum_{v\in\cP_f}\lgp \l|\ol\Delta(x)\r|^{-1}_{v,Gauss}
    \leq \limsup_{n\rightarrow+\infty}
   {1\over n}\sum_{v\in\cP_\infty}\lgp\l|\ol\Delta(\xi)\r|_v}\\
&\leq&\ds \limsup_{n\rightarrow+\infty}
      \l(\frac{N\nu(\nu-1) C_2}{n}
      +{\nu\over n}\sum_{v\in\cP_\infty}h(g,v)
      +{\nu\over n}\sum_{v\in\cP_\infty}
      \widetilde h\l(N+(\nu-1)t,v\r)\r)\\
&\leq&\ds {t\nu^2(\nu-1)\over 1-\tau}C_2+
      \limsup_{n\rightarrow+\infty}\l({\nu\over n}\sum_{v\in\cP_\infty}h(g,v)+
      {\nu\over n}\sum_{v\in\cP_\infty}\widetilde h\l(N+(\nu-1)(t-1),v\r)\r)\\
&\leq&\ds {t\nu^2\over 1-\tau}\sg_\infty(\vec y)+{t\nu^2(\nu-1)\over 1-\tau}C_2+
      \limsup_{n\rightarrow+\infty}{\nu\over n}\sum_{v\in\cP_\infty}h(g,v)\,.
\end{array}
$$
\end{proof}

\begin{proof}[Conclusion of the proof of Theorem \ref{thm:chudnovsky}]
Proposition \ref{prop:costruzioneg} implies that:
$$
\begin{array}{rcl}
\ds\limsup_{n\rightarrow+\infty}{\nu\over n}h(g)
&\leq&\ds\limsup_{n\rightarrow+\infty}{\nu\over n}
       \l(const +{1-\tau\over\tau}
       \sum_{v\in\cP}\widetilde h\l(N+N{1-\tau\over\nu},v\r)\r)\\
&\leq&\ds\limsup_{n\rightarrow+\infty}{1-\tau\over\tau}
       {\nu\over n}\sum_{v\in\cP}\widetilde h\l(N+N{1-\tau\over\nu},v\r)\\
&\leq&\ds{1-\tau\over\tau}\nu\sg\l(\vec y\r)\limsup_{n\rightarrow+\infty}
       {1\over n}\l(N+N{1-\tau\over\nu}\r)\\
&\leq&\ds{1-\tau\over\tau}\nu\sg\l(\vec y\r)
       \l({t\nu\over 1-\tau}+t\r)\\
&\leq&\ds{1-\tau\over\tau}\nu t\l(1+{\nu\over 1-\tau}\r) \sg\l(\vec y\r)\,,
\end{array}
$$
which, combined with Propositions \ref{prop:primestime} and
\ref{lemma:omega}, implies that:
$$
\begin{array}{rcl}
\ds\sg^{(q)}_{\cP_f}(\cM)
&\leq&\ds\sg_f(\vec y)\l({\nu^2 t\over 1-\tau}+t\r)+
      \sg_\infty\l(\vec y\r){\nu^2 t\over 1-\tau}+
      \sg\l(\vec y\r){1-\tau\over\tau}\nu t\l(1+{\nu\over 1-\tau}\r)\\
&&\ds\hskip 15 pt +\log C_1+{\nu^2(\nu-1)t\over 1-\tau}C_2 \\
&\leq&\ds\sg(\vec y)\l({\nu^2 t\over 1-\tau}
      +\nu^2t\l({1\over\tau}+{1-\tau\over\nu\tau}\r)
      +t\r)
+\log C_1+{\nu^2(\nu-1)t\over 1-\tau}C_2\\
&\leq&\ds\sg(\vec y)\l(\nu^2t\l({\nu+1\over \nu}{1\over\tau}+{1\over 1-\tau}\r)
      -\nu t+t\r)
      +\log C_1+{\nu^2(\nu-1)t\over 1-\tau}C_2\,.
\end{array}
$$
The function ${\nu+1\over\nu}{1\over\tau}+{1\over 1-\tau}$
has a minimum for
$$
\tau=\l(1+\sqrt{\nu\over\nu+1}\r)^{-1}\,;
$$
for this value of $\tau$ we get:
$$
{\nu+1\over\nu}{1\over\tau}+{1\over 1-\tau}=\l(1+\sqrt{\nu+1\over\nu}\r)
\leq\begin{cases}4.95&\hbox{ for $\nu\geq 2$}\\ 5.9&\hbox{ for $\nu=1$}\end{cases}\,.
$$
Finally we have:
$$
\sg^{(q)}_{\cP_f}(\cM)\leq\log C_1+{\nu^2(\nu-1)t\over 1-\tau}C_2+
\begin{cases}\sg(\vec y)\l(4.95\nu^2 t-\nu t+(t-1)\r)&\hbox{ for $\nu\geq 2$}\\\\
       \sg(\vec y)5.9 t&\hbox{ for $\nu=1$}\end{cases}\,,
$$
where
$$
C_1=\sum_{v\in\cP_f}\lgp|A_1(x)|_{v,Gauss}
$$
and
$$
C_2=\sum_{v\in\cP_\infty}\log(1+|q|_v)\,.
$$
\end{proof}

\part{\sc\bfseries Global $q$-Gevrey series}
\section{Definition and first properties}

\emph{The notation is the same as in Part 1.}
We recall that $K$ is a finite extension of $k(q)$, equipped with its family of
ultrametric norms, normalized so that the Product Formula holds. The field
$K(x)$ is naturally a $q$-difference algebra with respect to the operator
$\sgq:f(x)\mapsto f(qx)$.

\begin{defn}
We say that the series $f(x)=\sum_{n=0}^\infty a_nx^n\in K[[x]]$ is
a \emph{global $q$-Gevrey series of orders $(s_1,s_2)\in\Q^2$}
if it is solution of a $q$-difference equation with coefficients in
$K(x)$ and
$$
\sum_{n=0}^\infty {a_n\over\l(q^{n(n-1)\over 2}\r)^{s_1}\l([n]_q^!\r)^{s_2}}x^n
$$
is a $G_q$-function.
\end{defn}

\begin{rmk}
We point out that:
\begin{enumerate}
\item
The definition above forces $s_2$ to be an integer, in fact the $q$-holonomy condition
implies that
the coefficients ${[n]_q^!}^{s_2}$, for $n\geq 1$, are all contained in a finite extension of $k(q)$.
\item
Being a global $q$-Gevrey series  of orders $(s_1,s_2)$ implies
being a $q$-Gevrey series of order
$s_1+s_2$ in the sense of \cite{BezivinBoutabaa}
for all $v\in\cP_\infty$ extending the $q^{-1}$-adic norm, \ie for the norms that verify
$|q|_v>1$: this simply means that $|q^{s_1 n(n-1)\over 2}{[n]_q^!}^{s_2}|_v$ as the
 same growth as $|q|_v^{(s_1+s_2)\frac{n(n-1)}2}$.
If $v\in\cP_\infty$ and $|q|_v<1$, then $|[n]_q|_v=1$. Therefore  a global
$q$-Gevrey series  of orders $(s_1,s_2)$ is a
$q$-Gevrey series of order $s_1$ in the sense of \cite{BezivinBoutabaa}.
This remark actually justifies the
the choice of considering two orders, instead of one as in the analytic theory.
\end{enumerate}
\end{rmk}

In the local case, both complex (\cf \cite{Beindex}, \cite{MarotteZhang}, \cite{ZhangFourier})
and $p$-adic (\cf \cite{BezivinBoutabaa}), the $q$-Gevrey order is not uniquely determined.
The global situation considered here is much more rigid: the same happens in  the differential case.

\begin{prop}
The orders of a given global $q$-Gevrey series $\sum_{n=0}^\infty a_nx^n\in K[[x]]\smallsetminus K[x]$
are uniquely determined.
\end{prop}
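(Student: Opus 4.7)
The plan is to argue by contradiction. Suppose $f(x)=\sum_{n\geq 0} a_n x^n\in K[[x]]\smallsetminus K[x]$ is global $q$-Gevrey of both orders $(s_1,s_2)$ and $(s_1',s_2')$. Setting $\Delta_i=s_i-s_i'$ and $g_n=a_n/\l(q^{s_1 n(n-1)/2}[n]_q^{!s_2}\r)$, one has $g_n'=q^{\Delta_1 n(n-1)/2}[n]_q^{!\Delta_2}g_n$, and by hypothesis both series $g=\sum g_n x^n$ and $g'=\sum g_n' x^n$ are $G_q$-functions. The goal is to show that this forces $\Delta_1=\Delta_2=0$.

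The key input is a \emph{place-wise} lower bound coming from the Product Formula. For any $G_q$-function $h$ there is a constant $C(h)$ such that $\sum_{v\in\cP}\log^+\sup_{s\leq n}|h_s|_v\leq C(h)\,n$ for all $n$; combining this with $\sum_{v\in\cP}\log|h_n|_v=0$ (when $h_n\neq 0$) and the identity $\log x=\log^+ x-\log^+ x^{-1}$ yields, at every single place $v$,
$$
\log^+|h_n|_v^{-1}\;\leq\;\sum_{w\in\cP}\log^+|h_n|_w^{-1}\;=\;\sum_{w\in\cP}\log^+|h_n|_w\;\leq\;C(h)\,n\,.
$$
Thus $|h_n|_v\geq e^{-C(h)\,n}$ at every $v$ simultaneously, as soon as $h_n\neq 0$.

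Now restrict to $v\in\cP_\infty$. For $v$ extending $|~|_{q^{-1}}$ (so $|q|_v>1$) the dominant term of $[n]_q=1+q+\dots+q^{n-1}$ is $q^{n-1}$, so $|[n]_q^!|_v=|q|_v^{n(n-1)/2}$ exactly, and hence $|g_n'|_v=|q|_v^{(\Delta_1+\Delta_2)n(n-1)/2}\,|g_n|_v$. Since $f\notin K[x]$, the set $S$ of $n$ with $g_n\neq 0$ is infinite; applying the lower bound above to $h=g$, if $\Delta_1+\Delta_2>0$ then $\frac{1}{n}\log|g_n'|_v\to+\infty$ along $S$, contradicting $\sg(g')<\infty$. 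Hence $\Delta_1+\Delta_2\leq 0$; since $g_n=q^{-\Delta_1 n(n-1)/2}[n]_q^{!-\Delta_2}g_n'$ and $g'$ is also a $G_q$-function, the symmetric argument (with $h=g'$) yields $\Delta_1+\Delta_2\geq 0$, hence $\Delta_1+\Delta_2=0$.

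For $v\in\cP_\infty$ extending $|~|_q$ (so $|q|_v<1$) the strict ultrametric inequality applied to $[n]_q$ gives $|[n]_q|_v=1$, hence $|[n]_q^!|_v=1$, and $|g_n'|_v=|q|_v^{\Delta_1 n(n-1)/2}|g_n|_v$. The same reasoning with $\Delta_1$ in place of $\Delta_1+\Delta_2$ (now $\log|q|_v<0$, which flips the direction of the constraint) together with the symmetric argument rules out $\Delta_1\neq 0$, so $\Delta_1=0$ and therefore $\Delta_2=0$. The whole argument uses only the definition of $G_q$-function and the Product Formula; the only delicate point is the place-wise lower bound extracted from the finite-size condition, and the essential use of the fact that $\cP_\infty$ contains places of both types, which is what permits the two independent constraints on $\Delta_1$ and $\Delta_1+\Delta_2$. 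No appeal to Part~2, to the regularity theorem, or to $q$-Fourier transforms is needed.
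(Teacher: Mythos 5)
Your proof is correct and follows essentially the same route as the paper's: it compares the two normalized coefficient sequences place by place in $\cP_\infty$, using the places with $|q|_v>1$ to constrain $s_1+s_2$ and those with $|q|_v<1$ to constrain $s_1$, and then runs the symmetric argument. The one place where you add genuine value is the explicit place-wise lower bound $|h_n|_v\geq e^{-C(h)n}$ derived from the Product Formula, which justifies the paper's terse ``having finite size implies having finite radius of convergence'' step and makes precise why the quotient of the two $G_q$-functions cannot grow super-exponentially at any single place.
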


\begin{proof}
Suppose that $\sum_{n=0}^\infty a_nx^n$ is a global $q$-Gevrey series
of orders $(s_1,s_2)$ and $(t_1,t_s)$. By definition
$$
\sum_{n=0}^\infty {a_n\over\l(q^{n(n-1)\over 2}\r)^{s_1}\l([n]_q^!\r)^{s_2}}x^n
\hbox{\ and\ } \sum_{n=0}^\infty
{a_n\over\l(q^{n(n-1)\over 2}\r)^{t_1}\l([n]_q^!\r)^{t_2}}x^n
$$
have finite size. We have:
$$
\sum_{n=0}^\infty {a_n\over\l(q^{n(n-1)\over 2}\r)^{t_1}\l([n]_q^!\r)^{t_2}}x^n
=\sum_{n=0}^\infty \l(q^{n(n-1)\over 2}\r)^{s_1-t_1}\l([n]_q^!\r)^{s_2-t_2}
 {a_n\over\l(q^{n(n-1)\over 2}\r)^{s_1}\l([n]_q^!\r)^{s_2}}x^n\,.
$$
One observes that having finite size implies having finite radius of convergence
for all $v\in\cP$,
therefore for all $v$ such that $|q|_v\neq 1$
we must have:
$$
\limsup_{n\rightarrow\infty}
\l|\l(q^{n(n-1)\over 2}\r)^{s_1-t_1}\l([n]_q^!\r)^{s_2-t_2}\r|_v^{1/n}<\infty \,.
$$
If $|q|_v>1$ this implies:
$$
\limsup_{n\rightarrow\infty}
\l|q\r|_v^{{n-1\over 2}\l(s_1+s_2-(t_1+t_2)\r)}<\infty\,.
$$
Since for all $v\in\cP$ such that  $|q|_v<1$
the limit
$\limsup_{n\rightarrow\infty}|[n]_q^!|_v^{1/n}$
is bounded we get:
$$
\limsup_{n\rightarrow\infty}
\l|q\r|_v^{{n-1\over 2}\l(s_1-t_1\r)}<\infty\,.
$$
We deduce that necessarily $s_1+s_2\leq t_1+t_2$ and $t_1\leq s_1$, hence
$t_1\leq s_1$ and $s_2\leq t_2$.
Since the role of $(t_1,t_2)$ and $(s_1,s_2)$ is symmetric, one obviously obtain
the opposite inequalities  in the same way.
\end{proof}

\subsection{Changing $q$ in $q^{-1}$}
One can transform a $q$-difference equations in a $q^{-1}$-difference equations,
obtaining:

\begin{prop}\label{prop:cambiordine}
Let $f(x)\in K[[x]]$ be a global $q$-Gevrey series
of orders $\l(-s_1,-s_2\r)\in\Q^2$,
then $f(x)$ is a global $q^{-1}$-Gevrey series of orders
$(s_1+s_2,-s_2)$.
\par
In particular, if $f(x)$ is a global $q$-Gevrey series of orders
$(t_1,-t_2)$, with $t_1\geq t_2\geq 0$, then $f(x)$ is a global
$q^{-1}$-Gevrey series of negative orders $(-(t_1-t_2),-t_2)$.
\end{prop}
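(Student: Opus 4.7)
The plan is to reduce the proposition to a direct algebraic manipulation together with the observation that the notion of $G_q$-function is insensitive to replacing $q$ by $q^{-1}$.

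\medskip

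\textbf{Step 1: A key identity for $q^{-1}$-factorials.} First I would derive the relation
$$
[n]_{q^{-1}} = \frac{q^{-n}-1}{q^{-1}-1} = q^{1-n}\,[n]_q\,,
$$
obtained by multiplying numerator and denominator by $q^n$. Iterating gives
$$
[n]_{q^{-1}}^! \;=\; q^{\sum_{k=1}^n (1-k)}\,[n]_q^! \;=\; q^{-n(n-1)/2}\,[n]_q^!\,.
$$

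\textbf{Step 2: Matching the normalized series.} Write $f(x)=\sum_{n\ge 0}a_n x^n$. By hypothesis,
$$
\sum_{n\ge 0} a_n \bigl(q^{n(n-1)/2}\bigr)^{s_1}\bigl([n]_q^!\bigr)^{s_2}\,x^n
$$
is a $G_q$-function. On the other hand, the candidate witness for $f$ being a global $q^{-1}$-Gevrey series of orders $(s_1+s_2,-s_2)$ is
$$
\sum_{n\ge 0}\frac{a_n}{\bigl((q^{-1})^{n(n-1)/2}\bigr)^{s_1+s_2}\bigl([n]_{q^{-1}}^!\bigr)^{-s_2}}\,x^n
\;=\;\sum_{n\ge 0} a_n\,q^{n(n-1)(s_1+s_2)/2}\bigl([n]_{q^{-1}}^!\bigr)^{s_2}\,x^n\,.
$$
Substituting Step~1 and simplifying the exponent of $q$ (the $s_2$-contribution cancels), this equals
$$
\sum_{n\ge 0} a_n\,q^{n(n-1)s_1/2}\bigl([n]_q^!\bigr)^{s_2}\,x^n\,,
$$
which is exactly the series from the hypothesis.

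\medskip

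\textbf{Step 3: $G_q = G_{q^{-1}}$.} I would then note that the size invariant $\sigma$ depends only on the coefficients of the series and the set of places of $K$, not on $q$ itself. Moreover, any $q$-difference equation $\sum_{j=0}^\nu a_j(x)f(q^j x)=0$ becomes, after the substitution $x\mapsto q^{-\nu}x$, a $q^{-1}$-difference equation satisfied by $f$; conversely for the other direction. Hence the classes of $G_q$-functions and $G_{q^{-1}}$-functions over $K(x)$ coincide. Combined with Step~2, this gives the first assertion.

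\medskip

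\textbf{Step 4: The special case.} For the ``in particular'' part, I would apply the already proved statement with $s_1=-t_1$ and $s_2=t_2$: this identifies a global $q$-Gevrey series of orders $(t_1,-t_2)=(-s_1,-s_2)$ as a global $q^{-1}$-Gevrey series of orders $(s_1+s_2,-s_2)=(-(t_1-t_2),-t_2)$. Both components are $\le 0$ under the assumption $t_1\ge t_2\ge 0$, so these are indeed negative orders.

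\medskip

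There is essentially no obstacle: once the factorial identity of Step~1 is in hand, everything reduces to bookkeeping of exponents, and the only conceptual point is the symmetry of the defining properties of a $G_q$-function under $q\leftrightarrow q^{-1}$, which is immediate from the definitions.
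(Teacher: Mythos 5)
Your proof is correct and follows essentially the same route as the paper's: the paper's proof is a one-line rewriting $\bigl(q^{n(n-1)/2}\bigr)^{s_1}\bigl([n]_q^!\bigr)^{s_2}=\bigl(q^{-n(n-1)/2}\bigr)^{-s_1-s_2}\bigl([n]_{q^{-1}}^!\bigr)^{s_2}$, which is exactly the identity you unpack in Steps~1--2, with the symmetry $G_q=G_{q^{-1}}$ (your Step~3) left implicit. Your version just supplies the details the paper omits.
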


\begin{proof}
It is enough to write $f(x)$ in the form:
$$
f(x)=\sum_{n=0}^\infty
{a_n\over\l(q^{n(n-1)\over 2}\r)^{s_1}\l([n]_q^!\r)^{s_2}}x^n
=\sum_{n=0}^\infty
{a_n\over\l(q^{-{n(n-1)\over 2}}\r)^{-s_1-s_2}\l([n]_{q^{-1}}^!\r)^{s_2}}x^n\,,
$$
where $\sum_n a_nx^n$ is a convenient $G_q$-function.
\end{proof}

\subsection{Rescaling of the orders}\label{subsec:rescaling}
Clearly we can always look at a global $q$-Gevrey series of orders $(s,0)$ as
a global $q^t$-Gevrey series of orders $(s/t,0)$, for any $t\in\Q$, $t\neq 0$, the
holonomy condition being always satisfied:

\begin{lemma}\label{lemma:cambiordine}
Let $t\in\Q$, $t\neq 0$.
If $f(x)=\sum_{n=0}^\infty a_nx^n$ is solution of a $q$-difference equation
then it is solution of a $q^t$-difference equation.
\end{lemma}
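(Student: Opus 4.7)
My plan is first to reduce the rational case to the integer case, then to produce the integer-shift equation from a finite-dimensionality argument. Write $t=p/r$ in lowest terms with $r\geq 1$ and set $L=K(q^{1/r})$, so that $q^t=q^{p/r}\in L$. Since $(q^t)^r=q^p$, any identity of the form $\sum_j c_j(x)f(q^{jp}x)=0$ with $c_j\in K(x)$, not all zero, reinterprets automatically as a $q^t$-difference equation over $L(x)$: the shift $(q^t)^ix$ is assigned the coefficient $c_{i/r}$ when $r\mid i$ and zero otherwise. Consequently it is enough to exhibit a $q^p$-difference equation for $f$, i.e.\ to treat the case in which the exponent is a nonzero integer.

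To do this, I would start from the hypothesized equation
$$
\sum_{i=0}^{\nu}a_i(x)f(q^ix)=0,
$$
with $a_0,a_\nu\in K(x)\smallsetminus\{0\}$ (which may be assumed after clearing the equation), and consider the $K(x)$-subspace $V\subseteq K((x))$ spanned by $\{f(q^nx):n\in\Z\}$. Solving this equation for $f(q^\nu x)$, which is permitted since $a_\nu\neq 0$, and iterating puts every $f(q^nx)$ with $n\geq\nu$ into the $K(x)$-span of $\{f(x),f(qx),\dots,f(q^{\nu-1}x)\}$; solving instead for $f(x)$, using $a_0\neq 0$, and iterating in the opposite direction yields the same conclusion for $n<0$. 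Hence $\dim_{K(x)}V\leq\nu$.

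Finally, the operator $\sgq^p$ maps each generator $f(q^nx)$ of $V$ to another generator $f(q^{n+p}x)$, so it stabilizes $V$. The infinite family $f,\sgq^pf,\sgq^{2p}f,\dots$ therefore sits inside a $K(x)$-vector space of dimension at most $\nu$, and must be $K(x)$-linearly dependent; any nontrivial such dependence relation is a $q^p$-difference equation for $f$, and hence, by the reduction above, a $q^t$-difference equation over $L(x)$. I do not anticipate a genuine obstacle here: the only real point to watch is the bookkeeping remark allowing one to pass from a $q^p$-equation over $K(x)$ to a $q^t$-equation over $L(x)$ by padding with zero coefficients, which relies on the identity $q^p=(q^t)^r$ and the fact that we are free to extend scalars from $K(x)$ to $L(x)$.
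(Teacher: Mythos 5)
Your proof is correct and follows essentially the same strategy as the paper's: reduce to the case of an integer exponent $p$, produce a $q^p$-difference equation from the finite-dimensionality of the $K(x)$-span of the $\sigma_q$-translates of $f$, and then pad with zero coefficients to view a $q^p$-equation as a $q^t$-equation using $q^p=(q^t)^r$. The small variants in your write-up --- working with the two-sided span $\{f(q^nx):n\in\Z\}$ to handle both signs of $t$ uniformly, and making the extension of scalars to $L=K(q^{1/r})$ explicit --- are harmless and if anything slightly cleaner than the paper's reduction to $t>0$, but they do not change the underlying argument.
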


\begin{proof}
If $f(x)$ is solution of a $q$-difference equation,
then it is also solution of a $q^{-1}$-difference equation.
Therefore we can suppose $t>0$.
Let $t={p\over r}$, with $p,r\in\Z_{>0}$. Since $f(x)$ is
solution of a $q$-difference operator,
we have:
$$
{\rm dim}_{K(x)}\sum_{i\geq 0}K(x)\sgq^i\l(f(x)\r)<+\infty\,.
$$
Then:
$$
{\rm dim}_{K(x)}\sum_{i\geq 0}K(x)\sg_{q^p}^i\l(f(x)\r)
={\rm dim}_{K(x)}\sum_{i\geq 0}K(x)\sgq^{ip}\l(f(x)\r)
\leq {\rm dim}_{K(x)}\sum_{i\geq 0}K(x)\sgq^i\l(f(x)\r)<+\infty\,,
$$
so $f(x)$ is solution of a $q^p$-difference operator.
Finally we can conclude since
$\sum_{i=0}^\nu a_i(x)f(q^{pi}x)=0$
implies that
$\sum_{i=0}^\nu a_i(x)f(\wtilde q^{t ir}x)=0$.
\end{proof}

Unfortunately, the same is not true for global $q$-Gevrey series
of orders $(0,s)$.
To prove it, one can calculate size of the series
$$
\Phi(x)=\sum_{n\geq 0}\frac{(\wtilde q;\wtilde q)_n^t}{(q;q)_n}x^n\,,
$$
where $\wtilde q$ is a $r$-th root of $q$, for some positive integer $r$,
$K=\Q(\wtilde q)$ and $t$ is an integer.
The Pochhammer symbols $(\wtilde q;\wtilde q)_n^t$ and $(q;q)_n$
are both polynomials in $\wtilde q^{1/2}$ of degree $tn(n+1)$
and $rn(n+1)$, respectively. If we want $\Phi(x)$ to have finite
size, we are forced to take $t\leq r$, so that it has
positive radius of convergence at any place $v$ such that $|q|_v>1$.
Notice that $\Phi(x)$ is convergent for any place $v$
such that $|q|_v<1$ and that the noncyclotomic places give a zero contribution
to the size.
As far as the cyclotomic places of $K$ is concerned, we obtain
$$
\sg_\cC(y)
=\limsup_{n\to\infty}\frac 1n\sum_{k=1}^{rn}
    \l(\l[\frac nk (k,r)\r]-t\l[\frac n k\r]\r)\lgp d^{-1}
\sim\limsup_{n\to\infty}\sum_{k=1}^{rn}\frac1k((k,r)-t)\log d^{-1}
\,.
$$
The limit above is infinite.

\section{Formal Fourier transformations}
\def\dep{d_p}

The following natural two $q$-analogues of the usual
formal Borel transformation
$$\begin{matrix}
(\cdot)^+:& K[[x]]&\longrightarrow &K[[z^{-1}]]\\\\
&\ds F=\sum_{n=0}^\infty a_nx^n &\longmapsto &
     \ds F^+=\sum_{n=0}^\infty [n]_q^!a_nz^{-n-1}
\end{matrix}
$$
and
$$\begin{matrix}
(\cdot)^\#:& K[[x]]&\longrightarrow &K[[z^{-1}]]\\\\
&\ds F=\sum_{n=0}^\infty a_nx^n &\longmapsto &
     \ds F^\#=\sum_{n=0}^\infty q^{n(n-1)\over 2}a_nz^{-n-1}
\end{matrix}\,,
$$
are equally considered in the literature on $q$-difference equations.
From an archimedean analytical point of view, they
are equivalent as soon as one works under the hypothesis that $|q|\neq 1$
(\cf \cite[\S8]{MarotteZhang} and \cite[Part II]{DVChanggui}).
As already noticed in \cite{andreannalsII}, from a global point of view,
$(\cdot)^+$ and $(\cdot)^\#$ have a completely different behavior:
for the same reason the definition of global $q$-Gevrey series involves two orders.
\par
Let $p=q^{-1}$ and let $\sgp:z\mapsto pz$,
$\dep=\frac{\sgp-1}{(p-1)z}$\footnote{This notation is a little bit ambiguous and we should rather
write $\sg_{p,z}$, $d_{p,z}$, $d_{q,x}$, etc. etc. Anyway the contest will be always clear enough not to be
obliged to specify the variable in the notation. }.
The Borel transformations that we have introduced above
have the following properties:

\begin{lemma}\label{lemma:relazionilaplace}
For all $F=\sum_{n=0}^\infty a_nx^n\in K[[x]]$ we have:
$$
\begin{array}{ll}
\ds(xF)^+=-p \dep F^+\,,&\ds(\dq F)^+=zF^+-F(0)\,,\\
\ds(xF)^\#=\frac pz\sgp F^\#\,,&\ds(\sgq F)^\#=p\sgp F^\#\,.
\end{array}
$$
\end{lemma}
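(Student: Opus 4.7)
The four identities are purely formal statements about how two explicit linear maps interact with multiplication by $x$ and with the basic operators $\dq$ and $\sgq$. The natural approach is therefore a direct term-by-term verification on the monomial basis $\{x^n\}_{n\geq 0}$, since both $(\cdot)^+$ and $(\cdot)^\#$ are $K$-linear and continuous for the $(x)$-adic topology on $K[[x]]$.

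First I would record the four elementary facts on monomials:
$$
(xF)^+=\sum_{n\geq 0}[n+1]_q^!\,a_n z^{-n-2},\qquad
(\dq F)^+=\sum_{n\geq 0}[n+1]_q^!\,a_{n+1}z^{-n-1},
$$
$$
(xF)^\#=\sum_{n\geq 0}q^{n(n+1)/2}\,a_n z^{-n-2},\qquad
(\sgq F)^\#=\sum_{n\geq 0}q^{n(n+1)/2}\,a_n z^{-n-1},
$$
obtained from $xF=\sum_{n\geq 1}a_{n-1}x^n$, $\dq F=\sum_{n\geq 0}[n+1]_q\,a_{n+1}x^n$ and $\sgq F=\sum_{n\geq 0}q^n a_n x^n$.

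Next I would compute the right-hand sides in the same way. For the identity $(xF)^+=-p\,\dep F^+$, the only thing to check is the scalar $\frac{p^{-n-1}-1}{p-1}$ appearing when $\dep$ acts on $z^{-n-1}$; substituting $p=q^{-1}$ transforms it into $-q\,[n+1]_q$, and after multiplying by $-p=-q^{-1}$ the factor $[n]_q^!\cdot[n+1]_q=[n+1]_q^!$ appears, matching the formula for $(xF)^+$. For $(\dq F)^+=zF^+-F(0)$ one simply shifts the index in $zF^+$ and isolates the constant term $a_0=F(0)$. For the two $(\cdot)^\#$-identities the verification is even shorter: $\sgp$ acts on $z^{-n-1}$ as multiplication by $p^{-n-1}=q^{n+1}$, and collecting the exponents $\tfrac{n(n-1)}{2}+n+1$ (respectively with an extra $-1$ from the factor $p/z$) yields $\tfrac{n(n+1)}{2}$, which is exactly the exponent appearing on the left-hand side.

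There is no genuine obstacle: the proof is a bookkeeping exercise with $q$-shifted exponents and $q$-factorials, and the whole lemma reduces to the identities $[n]_q^!\,[n+1]_q=[n+1]_q^!$ and $\tfrac{n(n-1)}{2}+n=\tfrac{n(n+1)}{2}$. The slight asymmetry between the $(\cdot)^+$-side, which mixes $\dep$ with $z$, and the $(\cdot)^\#$-side, which only involves $\sgp$ and multiplication by $p/z$, is precisely the reason the two transforms produce the different orders $s_1$ and $s_2$ in the definition of a global $q$-Gevrey series; making this visible on the monomial computation is the main conceptual point to keep in mind while writing the proof.
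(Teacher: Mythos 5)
Your proof is correct and follows essentially the same line as the paper's: a direct term-by-term verification on the monomial basis, with the only nontrivial ingredient being the identity $-p\,\dep z^{-n}=[n]_q\,z^{-n-1}$, which is exactly the relation the paper singles out before declaring the remaining three identities immediate from the definitions. The paper's proof is simply a terser version of yours.
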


\begin{proof}
We deduce the first equality using the relation:
$$
-p\dep{1\over z^n}=[n]_q {1\over z^{n+1}}\,.
$$
All the other formulas easily follow from the definitions.
\end{proof}

\par
For any $q$-difference operator
$\sum_{i=0}^N a_i(x)\sgq^i \in K(x)[\sgq]$
(resp. $\sum_{i=0}^N b_i(x)\dq^i$$\in K(x)[\dq]$) we set:
$$
{\rm deg}_{\sgq}\sum_{i=0}^\nu a_i(x)\sgq^i
=\sup\{i\in\Z :0<i<\nu,\ a_i(x)\neq 0\}\
$$
$$
\l(\hbox{resp. }
{\rm deg}_{\dq}\sum_{i=0}^\nu b_i(x)\dq^i
=\sup\{i\in\Z :0<i<\nu,\ b_i(x)\neq 0\}\r)
$$
Obviously we have $K(x)[\dq]=K(x)[\sgq]$ and
$\deg_{\dq}=\deg_{\sgq}$ (for explicit formulas \cf \cite[1.1.10]{DVInv} and \eqref{eq:ab} below).
The previous lemma justifies the definition of the formal Fourier
transformations below, acting on the skew rings
$K\l[x,\dq\r]$ and $K\l[x,\sgq\r]$:

\begin{defn}We call the maps:
$$\begin{matrix}
\cF_{q^+}&: K\l[x,\dq\r]&\longrightarrow & K\l[z,\dep\r] &\hskip 10pt\hbox{ and }\hskip 10pt&
   \cF_{q^\#}&: K\l[x,\sgq\r]&\longrightarrow & K\l[{1\over z},\sgp\r]\\\\
&\dq &\longmapsto & z &&&
   \sgq &\longmapsto & p\sgp\\\\
&x &\longmapsto & -p\dep&&&
   x &\longmapsto &{1\over qz}\sgp
\end{matrix}
$$
the {\it $q^+$-Fourier transformation} and
the {\it $q^\#$-Fourier transformation} respectively.
\end{defn}

\begin{rmk}
Let $\cF_p:K[z,\dep]\rightarrow K[x,\dq]$ and let
$\la: K\l[x,\dq\r]\rightarrow K\l[x, \dq\r]$, $\dq\mapsto-{1\over q}\dq$,
$x\mapsto-qx$
Then $\cF_{q^+}^{-1}=\la\circ\cF_{p^+}$.
\par
As far as $\cF_{q^\#}$ is concerned,
if ${\mathcal L}=\sum_{i=0}^\nu a_i({1\over z})\sg_p^i
\in K\l[{1\over z},\sgp\r]$ is such that
$\deg_{1\over z}a_i\l({1\over z}\r)\leq i$, there exists a unique
${\mathcal N}\in K\l[x,\sgq\r]$ such that
$\cF_{q^\#}({\mathcal N})={\mathcal L}$ and we note
$\cF_{q^\#}^{-1}({\mathcal L})={\mathcal N}$.
\end{rmk}

In the following lemma we verify that the formal Fourier
transformations we have just
defined are compatible with the Borel transformations $(\cdot)^+$ and
$(\cdot)^\#$:

\begin{lemma}
Let $F\in K[[x]]$ be a series solution of a
$q$-difference linear operator ${\mathcal N}\in K\l[x,\dq\r]$,
such that $\nu=\deg_{\dq}{\mathcal N}$
(resp. ${\mathcal N}\in K\l[x,\sgq\r]$). Then
$d_{q^{-1}}^\nu\circ\cF_{q^+}({\mathcal N})F^+=0$
(resp. $\cF_{q^\#}({\mathcal N})F^\#=0$).
\par
Inversely:
\begin{enumerate}
\item
If $F^+$ is a solution of ${\mathcal L}_1\in K\l[z,\dep\r]$, then
$\cF_{q^+}^{-1}({\mathcal L}_1)F=0$.
\item
If ${\mathcal L}_2\in K\l[{1\over z},\sgp\r]$ is such that
${\mathcal L}_2F^\#=0$, for all $n\in\N$, $n>>0$, we have:
$\cF_{q^\#}^{-1}(\sgp^n\circ{\mathcal L}_2)F=0$.
\end{enumerate}
\end{lemma}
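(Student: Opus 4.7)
The plan is to reduce each claim to the four identities of Lemma \ref{lemma:relazionilaplace}, handled by induction on the order of the operator.

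For the forward $\#$-case, the identities $(xF)^\# = \cF_{q^\#}(x)F^\#$ and $(\sgq F)^\# = \cF_{q^\#}(\sgq)F^\#$ have no boundary term. Iterating them on monomials $x^a\sgq^b$, induction on total degree gives $(\cN G)^\# = \cF_{q^\#}(\cN)G^\#$ for every $\cN\in K[x,\sgq]$ and every $G\in K[[x]]$. Specialising $G = F$ and using $\cN F = 0$ yields $\cF_{q^\#}(\cN)F^\# = 0$.

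For the forward $+$-case, the identity $(\dq G)^+ = zG^+ - G(0)$ generates boundary corrections. Iterating gives $(\dq^j F)^+ = z^j F^+ - P_j(z)$ with $P_j\in K[z]$ of degree at most $j-1$, while $(b(x)G)^+ = \cF_{q^+}(b(x))G^+$ iterates cleanly from $(xG)^+ = -p\dep G^+$. Combining, for $\cN = \sum_j b_j(x)\dq^j$ of order $\nu$ one obtains $(\cN F)^+ = \cF_{q^+}(\cN)F^+ - R(z)$, where $R(z) := \sum_j \cF_{q^+}(b_j(x))P_j(z)$ is a polynomial of degree $<\nu$, since $\cF_{q^+}(b_j(x)) = b_j(-p\dep)$ and $\dep$ lowers polynomial degree. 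Because $\dep(z^k) = [k]_p z^{k-1}$, the operator $\dep^\nu = d_{q^{-1}}^\nu$ annihilates every polynomial of degree $<\nu$; combined with $\cN F = 0$, this forces $d_{q^{-1}}^\nu\circ\cF_{q^+}(\cN)F^+ = 0$.

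For the inverse direction, one first checks that $\cF_{q^+}$ is a ring isomorphism $K[x,\dq]\to K[z,\dep]$ of skew polynomial algebras: the commutation $\dq x - qx\dq = 1$ maps to $\dep z - pz\dep = 1$ (using $qp=1$), and the inverse is determined by $z\mapsto\dq$, $\dep\mapsto -qx$. In case (1), setting $\cN := \cF_{q^+}^{-1}(\cL_1)$, the forward identity above yields $(\cN F)^+ = \cL_1 F^+ - R(z) = -R(z)$, a polynomial in $z$. Since $(\cN F)^+\in z^{-1}K[[z^{-1}]]$ and $K[z]\cap z^{-1}K[[z^{-1}]] = \{0\}$, we get $(\cN F)^+ = 0$, and injectivity of $(\cdot)^+$ (the factors $[n]_q^!$ are nonzero in $K$, since $q$ is not a root of unity) gives $\cN F = 0$. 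Case (2) must cope with the fact that $\cF_{q^\#}$ is not surjective onto $K[\frac 1z,\sgp]$: since $\cF_{q^\#}(x) = \frac 1{qz}\sgp$ contributes one $\sgp$ per factor $\frac 1z$, each normal-form monomial $z^{-a}\sgp^b$ in the image satisfies $a\leq b$. Using $\sgp\cdot \frac 1z = \frac 1{pz}\sgp$, left multiplication by $\sgp^n$ raises the $\sgp$-exponent by $n$ without affecting the $z$-exponent, so $\sgp^n\circ\cL_2$ lies in the image for $n$ large enough. Taking $\cN := \cF_{q^\#}^{-1}(\sgp^n\cL_2)$ and applying the forward $\#$-case (no remainder) yields $(\cN F)^\# = \sgp^n\cL_2\,F^\# = 0$, whence $\cN F = 0$ by injectivity of $(\cdot)^\#$. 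The main subtleties are the remainder bookkeeping in the $+$-case (calibrating exactly $\dep^\nu$ to kill $R$) and the $\sgp^n$-clearing trick in part (2) to circumvent the non-surjectivity of $\cF_{q^\#}$.
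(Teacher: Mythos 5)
Your proof is correct and follows essentially the same route as the paper's: reduce to the commutation identities of Lemma~\ref{lemma:relazionilaplace}, track the polynomial remainder in the $+$-case, and invoke injectivity of the Borel transforms. Two small refinements on your side: your bound $\deg R<\nu$ is the sharp one (the paper's stated ``degree $\leq\nu$'' is a harmless slip, salvaged anyway because $(\cdot)^+$ lands in $z^{-1}K[[z^{-1}]]$, which meets $K[z]$ only in $0$), and in inverse case~(1) you conclude directly from $K[z]\cap z^{-1}K[[z^{-1}]]=\{0\}$ instead of applying $\dep^\nu$ and pulling back through $\dep^\nu H^+=\l((-qx)^\nu H\r)^+$ as the paper does; you also spell out the $\#$-case and the $\sgp^n$-clearing in part~(2), which the paper dispatches with ``quite similar.''
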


\begin{proof}
We prove the statements for $(\cdot)^+$. The proof for
$(\cdot)^\#$ is quite similar.
We write $\cN$ in the form:
$$
{\mathcal N}=\sum_{j=0}^\nu \sum_{i=0}^N a_{i,j}x^i\dq^j\in  K\l[x,\dq\r]\,.
$$
Lemma \ref{lemma:relazionilaplace} implies that $\cF_{q^+}({\mathcal N})F^+$ is a polynomial
of degree less or equal to $\nu$, therefore $d_{q^{-1}}^\nu\circ\cF_{q^+}({\mathcal N})F^+=0$.
Let us now write $\cL_1$ as:
$$
{\mathcal L}_1=\sum_{j=0}^\nu \sum_{i=0}^N a_{i,j}z^i \dep^j\in
K\l[z,\dep\r]\,.
$$
Then $\l(\cF_{q^+}^{-1}({\mathcal L}_1)F\r)^+$ is a polynomial of
degree less or equal to $\nu$.
Hence we obtain:
$$
\dep^\nu \l(\cF_{q^+}^{-1}({\mathcal L}_1)F\r)^+=
\l((-qx)^\nu\cF_{q^+}^{-1}({\mathcal L}_1)F\r)^+=0
$$
and finally $(-qx)^\nu\cF_{q^+}^{-1}({\mathcal L}_1)F=0$.
\end{proof}

\begin{rmk}
In the following we will use the formal Fourier transformations above
composed with the symmetry $S:z\mapsto 1/x$:
\beq\label{eq:Fourier+sym}
\begin{matrix}
S\circ\cF_{q^+}&: K\l[x,\dq\r]&\longrightarrow & K\l[\frac 1x,x,\dq\r]
    &\hskip 10pt\hbox{ and }\hskip 10pt&
   S\circ\cF_{q^\#}&: K\l[x,\sgq\r]&\longrightarrow & K\l[x,\sgq\r]\\\\
&\dq &\longmapsto & \frac 1x&&&
   \sgq &\longmapsto & \frac 1q\sgq\\\\
&x &\longmapsto & x^2\dq&&&
   x &\longmapsto &{x\over q}\sgq
\end{matrix}\,.
\eeq
Notice that $S\circ\cF_{q^+}(\dq\circ x)=x\dq$.
\end{rmk}

\section{Action of the formal Fourier transformations on the Newton Polygon}

Let as consider a linear $q$-difference operator:
\beq\label{eq:operatore}
{\mathcal N}=\sum_{i=0}^\nu a_i(x)x^i\dq^i
=\sum_{i=0}^\nu b_i(x)\sgq^i\,,
\eeq
such that $b_j(x),a_j(x)\in K[x]$.
Applying formulas \cite[1.1.10]{DVInv}, we obtain:
\beq \label{eq:ab}
\begin{array}{rcl}
{\mathcal N}
    &=&\ds\sum_{j=0}^\nu b_j(x)\sum_{i=0}^j{j\choose i}_q
    (1-q)^i q^{i(i-1)/2}x^i\dq^i\\
&=&\ds\sum_{i=0}^\nu(1-q)^i q^{i(i-1)/2}
    \l(\sum_{j=i}^\nu {j\choose i}_qb_j(x)\r)x^i\dq^i\,.
\end{array}
\eeq
Therefore
$a_i(x)=(1-q)^i q^{i(i-1)/2}\sum_{j=i}^\nu {j\choose i}_q b_j(x)$.
\par
We recall the definition of the Newton-Ramis Polygon:

\begin{defn}
Let ${\mathcal N}=\sum_{i=0}^\nu a_i(x)x^i\dq^i
=\sum_{i=0}^\nu b_i(x)\sgq^i$ be
such that $b_j(x),a_j(x)\in K[x]$. Then we define
{\it the Newton-Ramis Polygon of
${\mathcal N}$ with respect to $\sgq$} (and we write $NRP_{\sgq}({\mathcal N})$)
(resp. {\it with respect to $\dq$} (and we write $NRP_{\dq}({\mathcal N})$))
to be the convex hull of the following set:
$$
\Cup_{b_i(x)\neq 0}\{(u,v)\in\R^2:
u=i,{\deg}_x b_i(x)\geq v\geq\ord_x b_i(x)\}\subset\R^2\,.
$$
$$
\l(\hbox{resp. }\Cup_{a_i(x)\neq 0}\{(u,v)\in\R^2:
u\leq i,{\deg}_x a_i(x)\geq v\geq\ord_x
a_i(x)\}\subset\R^2\r)\,.
$$
For an operator with rational coefficient $\cN$,
we set $NRP_{\sgq}(\cN)=NRP_{\sgq}(f(x)\cN)$ and $NRP_{\dq}(\cN)=NRP_{\dq}(f(x)\cN)$, where $f(x)$
is a polynomial in $K[x]$ such that $f(x)\cN$ can be written as above.
In this way the Newton-Ramis polygon is defined up to a vertical shift,
so that its slopes are actually well-defined.
\end{defn}

\begin{lemma}
We have:
$$
NRP_{\dq}({\mathcal N})=\Cup_{(u_0,v_0)\in NRP_{\sgq}({\mathcal N})}
\{(u,v_0)\in\R^2: u\leq u_0\}\,.
$$
\end{lemma}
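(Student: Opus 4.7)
The plan is to establish the identity by two inclusions, both of which follow from the $K$-linear relations between the coefficients $\{a_i(x)\}$ and $\{b_j(x)\}$. First I would note that the right-hand side $L$ (the ``left-shadow'' of $NRP_{\sgq}(\cN)$) is convex and stable under $(u,v)\mapsto(u-t,v)$ for $t\geq 0$, both immediate from its definition; and $NRP_{\dq}(\cN)$ enjoys the same horizontal stability by construction. The algebraic input would be formula \eqref{eq:ab}, which expresses $a_i(x)=(1-q)^i q^{i(i-1)/2}\sum_{j\geq i}{j\choose i}_q b_j(x)$ as a $K$-linear combination of the $b_j$'s with $j\geq i$, together with the dual relation $b_j(x)=\sum_{i\geq j}\beta_{ji}\,a_i(x)$ with $\beta_{ji}\in K$, obtained by inverting the change of basis $x^i\dq^i\leftrightarrow\sgq^j$ via the formula $\dq^n=\frac{(-1)^n}{(q-1)^n x^n}\sum_{i\leq n}c_{i,n}\sgq^i$ cited from \cite[1.1.10]{DVInv}. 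From these two relations one extracts the four bounds $\deg_x a_i\leq\max_{j\geq i}\deg_x b_j$, $\ord_x a_i\geq\min_{j\geq i}\ord_x b_j$, and their symmetric counterparts for $b_j$, the extrema being taken over nonzero coefficients.

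For the inclusion $NRP_{\dq}(\cN)\subseteq L$, convexity of $L$ reduces the check to showing that each generating strip $S_i:=\{(u,v):u\leq i,\ \ord_x a_i\leq v\leq\deg_x a_i\}$ lies in $L$. Given $(u,v)\in S_i$, the first pair of inequalities yields $v\in[\min_{j\geq i}\ord_x b_j,\ \max_{j\geq i}\deg_x b_j]$; since both endpoints are attained at indices $j\geq i$, a short convexity argument inside $NRP_{\sgq}(\cN)$ then produces some $u'\geq i$ with $(u',v)\in NRP_{\sgq}(\cN)$, so that $(u,v)\in L$. Conversely, for $L\subseteq NRP_{\dq}(\cN)$, horizontal stability of $NRP_{\dq}(\cN)$ reduces the task to $NRP_{\sgq}(\cN)\subseteq NRP_{\dq}(\cN)$, and then by convexity to showing each vertical segment $T_j:=\{(j,v):\ord_x b_j\leq v\leq\deg_x b_j\}$ sits inside $NRP_{\dq}(\cN)$. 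The dual bounds furnish indices $i_1,i_2\geq j$ with $\ord_x a_{i_1}\leq v\leq\deg_x a_{i_2}$; when some single $i\geq j$ has $v\in[\ord_x a_i,\deg_x a_i]$, the point $(j,v)$ lies directly in $S_i$, and otherwise $(j,v)$ must be produced as an explicit convex combination of points in $S_{i_1}$ and $S_{i_2}$.

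The main obstacle I anticipate is precisely this last case, where the strips $S_{i_1}$ and $S_{i_2}$ fail to overlap vertically (so $\deg_x a_{i_1}<v<\ord_x a_{i_2}$) and no single strip contains $(j,v)$. A brief interpolation resolves it: setting $\lambda=(v-\ord_x a_{i_2})/(\deg_x a_{i_1}-\ord_x a_{i_2})\in(0,1)$ realizes $v$ as the convex combination $\lambda\deg_x a_{i_1}+(1-\lambda)\ord_x a_{i_2}$, and choosing $u_1=(j-(1-\lambda)i_2)/\lambda$, the constraint $u_1\leq i_1$ reduces to $j\leq\lambda i_1+(1-\lambda)i_2$, which is automatic since $i_1,i_2\geq j$. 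Then $(j,v)=\lambda(u_1,\deg_x a_{i_1})+(1-\lambda)(i_2,\ord_x a_{i_2})$ exhibits $(j,v)$ as a convex combination of points in $S_{i_1}\cup S_{i_2}\subseteq NRP_{\dq}(\cN)$, completing the proof.
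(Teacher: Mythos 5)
Your proof is correct, and it is essentially an unpacked version of the paper's own argument: the paper simply states that the lemma ``follows from \eqref{eq:ab}'', and what that means in practice is exactly what you do — extract the degree/order bounds from the two triangular $K$-linear relations between the $a_i$ and $b_j$, and then convert them into the two polygon inclusions via convexity and the left-stability of $NRP_{\dq}$. The only cosmetic difference is that you invoke the $\dq^n\leftrightarrow\sgq^i$ formula from \cite[1.1.10]{DVInv} for the dual relation $b_j=\sum_{i\geq j}\beta_{ji}a_i$, whereas the paper implicitly relies on the fact that the system \eqref{eq:ab} is upper-triangular with nonzero diagonal and hence inverts with the same support pattern.
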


\begin{proof}
The statement follows from \eqref{eq:ab}.
\end{proof}

The following proposition describes the behavior of the Newton-Ramis Polygon
with respect to $\cF_{q^+}$ and $\cF_{q^\#}$.

\begin{prop}\label{prop:newtonfourier}
The map\footnote{To make the notation clear,
we underline that we denote $NRP_{\sgp}\l(\cF_{q^\#}\l({\mathcal N}\r)\r)$
the Newton-Ramis Polygon of $\cF_{q^\#}\l({\mathcal N}\r)$ defined with respect to
$z$ and $\sgp$ and $NRP_{\dep}\l(\cF_{q^+}\l({\mathcal N}\r)\r)$
the Newton-Ramis Polygon of $\cF_{q^+}\l({\mathcal N}\r)$  defined with respect to
$z$ and $\dep$.}:
$$
\begin{matrix}
NRP_{\sgq}({\mathcal N})&\longrightarrow
   &NRP_{\sgp}\l(\cF_{q^\#}\l({\mathcal N}\r)\r)\\\\
(u,v)&\longmapsto &(u+v,-v)
\end{matrix}
\hskip 20 pt
\l(\hbox{resp.\,\,}
\begin{matrix}
NRP_{\dq}({\mathcal N})&\longrightarrow
   &NRP_{\dep}\l(\cF_{q^+}\l({\mathcal N}\r)\r)\\\\
(u,v)&\longmapsto &(u+v,-v)\end{matrix}\r)
$$
is a bijection between $NRP_{\sgq}({\mathcal N})$
and $NRP_{\sgp}\l(\cF_{q^\#}\l({\mathcal N}\r)\r)$
(resp. $NRP_{\dq}({\mathcal N})$ and $NRP_{\dep}$ $\l(\cF_{q^+}\l({\mathcal N}\r)\r)$).
\end{prop}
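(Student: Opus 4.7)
The plan is to compute the image of each monomial under the two formal Fourier transformations, track how the support points are permuted, and then extend by the linearity of $(u,v)\mapsto(u+v,-v)$ to the convex hulls. I will treat the two cases in parallel.

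For $\cF_{q^\#}$, an induction on $k$ using the commutation $\sgp\circ z^{-1}=qz^{-1}\sgp$ gives
$$
\cF_{q^\#}(x^k\sgq^i)=c_{k,i}\,z^{-k}\sgp^{i+k},\qquad c_{k,i}\in K^{\times}.
$$
Hence each monomial $x^k\sgq^i$ of $\cN=\sum_i b_i(x)\sgq^i$, whose support point is $(i,k)$, is sent to the single monomial $z^{-k}\sgp^{i+k}$ of $\cF_{q^\#}(\cN)$, whose support point is $(i+k,-k)$---exactly the image of $(i,k)$ under $(u,v)\mapsto(u+v,-v)$. Since this map is an automorphism of $\Z^2$, distinct monomials of $\cN$ yield distinct monomials of $\cF_{q^\#}(\cN)$ and no cancellation can occur. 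The linear bijection of supports then lifts to a bijection of convex hulls, settling the $\#$-half.

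For $\cF_{q^+}$, writing $\cN=\sum_{i,k}a_{i,k}\,x^{k+i}\dq^i$, I compute
$$
\cF_{q^+}(x^{k+i}\dq^i)=(-p)^{k+i}\,\dep^{k+i}z^i,
$$
and apply the commutation $\dep z=pz\,\dep+1$ iteratively (equivalently, combine the twisted Leibniz rule with $\dep^m z^i=\tfrac{[i]_p!}{[i-m]_p!}z^{i-m}$ for $m\le i$) to obtain
$$
\dep^{k+i}z^i=\sum_{m=0}^{i}\ga_{m,i,k}\,z^m\dep^{m+k},\qquad \ga_{m,i,k}\in K^{\times}.
$$
In $NRP_{\dep}$-coordinates each summand $z^m\dep^{m+k}$ corresponds to the point $(m+k,-k)$, and the rightmost point of the resulting family, $(i+k,-k)$, is the image of $(i,k)$ under $(u,v)\mapsto(u+v,-v)$.

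It remains to rule out cancellation on the upper--right boundary of the target polygon, because different pairs $(i',k)$ with $i'\ge i$ also contribute to $(m+k,-k)$ when $m=i$. Fixing $k_0$ and setting $i^*(k_0)=\max\{i:a_{i,k_0}\ne 0\}$, the coefficient of $z^{i^*(k_0)}\dep^{i^*(k_0)+k_0}$ in $\cF_{q^+}(\cN)$ reduces to the single non-zero term $a_{i^*(k_0),k_0}(-p)^{i^*(k_0)+k_0}\ga_{i^*(k_0),i^*(k_0),k_0}$, since any other contributing $(i,k_0)$ would need $i>i^*(k_0)$, forcing $a_{i,k_0}=0$. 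Hence the rightmost support point on each horizontal line of $NRP_{\dq}(\cN)$ is sent to the rightmost support point on the corresponding horizontal line of $NRP_{\dep}(\cF_{q^+}(\cN))$. Combining this with the preceding lemma, which realises $NRP_{\dq}$ and $NRP_{\dep}$ as leftward extensions of their $\sgq$- and $\sgp$-versions, and with the elementary observation that $(u,v)\mapsto(u+v,-v)$ maps leftward half-lines $\{(u,v_0):u\le u_0\}$ bijectively onto leftward half-lines $\{(u',-v_0):u'\le u_0+v_0\}$ while preserving convex hulls, the full bijection of polygons follows. The main obstacle is precisely this cancellation issue for $\cF_{q^+}$: a single monomial explodes into a cascade of lower-order contributions, and without the extremality argument above one cannot exclude that the rightmost boundary point on some horizontal line is annihilated; everything else is linear bookkeeping, which I will not carry out explicitly.
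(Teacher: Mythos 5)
Your proof is correct and follows the same route as the paper's. For $\cF_{q^\#}$ you compute, exactly as the paper does, that each monomial $x^k\sgq^i$ is sent to a single monomial proportional to $z^{-k}\sgp^{i+k}$, so the support is permuted by the linear bijection $(u,v)\mapsto(u+v,-v)$ and the convex hulls correspond. For $\cF_{q^+}$ you and the paper both expand $\cF_{q^+}(x^{k+i}\dq^i)$ into a cascade of monomials $z^m\dep^{m+k}$, $0\le m\le i$, whose rightmost term $m=i$ lands on $(i+k,-k)$. Where you go beyond the paper's text is the cancellation check: the paper records the expansion and simply asserts that the proposition follows, whereas you correctly single out that several pairs $(i,k_0)$ could in principle contribute to the same target monomial and kill a boundary coefficient, and you rule this out by observing that the coefficient of $z^{i^*(k_0)}\dep^{i^*(k_0)+k_0}$ receives a unique contribution, namely from $a_{i^*(k_0),k_0}$ with $i^*(k_0)$ maximal on the line $v=k_0$. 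Combined with the linearity of $(u,v)\mapsto(u+v,-v)$ and the preceding lemma realising $NRP_{\dq}$ as the leftward extension of $NRP_{\sgq}$, this extremality step yields both inclusions between the two polygons, so the argument is complete; it is precisely the ingredient that the paper's terse ``the statement follows from this remark'' leaves to the reader.
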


\begin{proof}
As far as $\cF_{q^\#}$ is concerned, it is enough to notice that:
$$
\cF_{q^\#}\l(\sum_{i=0}^\nu\sum_{j=0}^N b_{i,j}x^j\sgq^i\r)
=\sum_{i=0}^\nu\sum_{j=0}^N {b_{i,j}\over q^{j(j-3)/2}q^i}{1\over z^j}
\sgp^{i+j}\,.
$$
Let
$$
{\mathcal N}=\sum_{i=0}^\nu\sum_{j=0}^N a_{i,j}x^j\dq^i\,.
$$
We have:
$$\begin{array}{rcl}
\cF_{q^+}\l({\mathcal N}\r)
&=&\ds\sum_{i=0}^\nu\sum_{j=0}^N {(-1)^ja_{i,j}\over q^j}
  \dep^j\circ z^i\\
&=&\ds\sum_{i=0}^\nu\sum_{j=0}^\nu\sum_{h=0}^j{(-1)^ja_{i,j}\over q^j}
{j\choose h}_q{[i]_q^!\over[h-i]_q^!}q^{(j-h)(i-h)}z^{i-h}\dep^{j-h}\,.
\end{array}
$$
Then if $(i,j-i)\in NRP_{\dq}({\mathcal N})$ we have:
$$
(j-h,i-j)\in NRP_{\dep}\l(\cF_{q^+}\l({\mathcal N}\r)\r)
\hbox{ for all $h=0,\dots,j$.}
$$
The statement follows from this remark.
\end{proof}

By convention,
the vertical sides of $NRP_{\sgq}({\mathcal N})$)
(resp. $NRP_{\dq}({\mathcal N})$) have slope $\infty$.
The opposite of the finite slopes of the ``upper part'' of $NRP_{\sgq}({\mathcal N})$
are the slopes at $\infty$ of ${\mathcal N}$ while the finite slopes of
the ``lower part'' are the slopes of ${\mathcal N}$ at $0$.

\begin{cor}\label{cor:newtonfourier}
In the notation of the previous proposition,  $\cF_{q^\#}$ (resp. $\cF_{q^+}$)
acts in the following way on the slopes of the Newton-Ramis Polygon:
$$\begin{matrix}
&\l\{\hbox{slopes of $NRP_{\sgq}({\mathcal N})$}\r\}&\longrightarrow
        &\l\{\hbox{slopes of $NRP_{\sgp}\l(\cF_{q^\#}\l({\mathcal N}\r)\r)$}\r\}\\ \\
\Big(\hbox{resp. }&\l\{\hbox{slopes of $NRP_{\dq}({\mathcal N})$}\r\}&\longrightarrow
 &\l\{\hbox{slopes of $NRP_{\dep}\l(\cF_{q^\#}\l({\mathcal N}\r)\r)$}\r\}&\Big)\\ \\
    &\la&\longmapsto &-{\la\over 1+\la}\\
    &\infty&\longmapsto &-1\end{matrix}\,.
$$
\end{cor}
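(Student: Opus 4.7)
The plan is to deduce the corollary directly from the bijection of Newton-Ramis polygons established in Proposition~\ref{prop:newtonfourier}. Since that bijection is given by the $\R$-linear invertible map $T(u,v)=(u+v,-v)$, it sends edges of the convex hull $NRP_{\sgq}(\cN)$ (respectively $NRP_{\dq}(\cN)$) to edges of the image polygon, and therefore induces a bijection on the set of slopes. So the only thing to do is to compute how $T$ transforms the slope of a segment.

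Consider an edge of $NRP_{\sgq}(\cN)$ joining two vertices $(u_1,v_1)$ and $(u_2,v_2)$, and suppose first $u_1\neq u_2$, so the slope is $\la=(v_2-v_1)/(u_2-u_1)$. The image points $T(u_i,v_i)=(u_i+v_i,-v_i)$ differ in their first coordinate by $(u_2-u_1)+(v_2-v_1)$ and in their second coordinate by $-(v_2-v_1)$, hence the image slope is
$$
\frac{-(v_2-v_1)}{(u_2-u_1)+(v_2-v_1)}=\frac{-\la}{1+\la},
$$
provided $1+\la\neq 0$. If instead the original edge is vertical ($u_1=u_2$, so by convention $\la=\infty$), then the image points differ only by $v_2-v_1$ in the first coordinate and by $-(v_2-v_1)$ in the second, so the image slope equals $-1$. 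The same calculation applies verbatim to $NRP_{\dq}(\cN)$ and $\cF_{q^+}$, since Proposition~\ref{prop:newtonfourier} gives the same linear map $T$.

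The only subtlety is the case $\la=-1$, where the formula $-\la/(1+\la)$ formally returns $\infty$: this just reflects the fact that an edge of slope $-1$ in the source is mapped to a vertical edge in the target, so the correspondence extends to $\R\cup\{\infty\}$ in the only consistent way. There is no real obstacle here: all the substantive content was already packaged into the change-of-variables calculation of Proposition~\ref{prop:newtonfourier}, and the present corollary is a formal consequence of the linearity of $T$ combined with the elementary fact that a linear bijection of $\R^2$ maps the edges of a convex polygon bijectively to those of its image.
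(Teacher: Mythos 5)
Your argument is correct and is exactly the implicit content behind the paper's corollary: Proposition~\ref{prop:newtonfourier} gives the linear bijection $T(u,v)=(u+v,-v)$ between the polygons, and since an invertible linear map of $\R^2$ carries edges of a convex polygon to edges of its image, one only needs the one-line slope computation $\la\mapsto -\la/(1+\la)$ (with vertical $\mapsto -1$) that you carry out. The paper states the corollary without a separate proof, so there is nothing to compare beyond this.
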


\section{Solutions at points of $K^\ast$}

We have described what happens
at zero and at $\infty$ when the Fourier transformations act. Now we want
to describe what happens at a point $\xi\in K^\ast={\mathbb P}^{1}(K)\smallsetminus\{0,\infty\}$.

\smallskip
To construct some formal solutions of our $q$-difference operators at
$\xi\in K^\ast$, we are going to consider a
ring defined as follows (\cf \cite[\S1.3]{DVdwork}).
For any $\xi\in K$ and any nonnegative integer $n$, we consider the polynomials
$$
T^q_n(x,\xi)=x^n\l(\frac \xi x;q\r)_n=
(x-\xi)(x-q\xi)\cdots(x-q^{n-1}\xi)\,.
$$
One verifies directly that for any $n\geq 1$
$$
\dq T^q_n(x,\xi)=[n]_qT^q_{n-1}(x,\xi)\,
$$
and $\dq T^q_0(x,\xi)=0$. The product
$T^q_n(x,\xi)T^q_m(x,\xi)$ can be written as a linear combination
with coefficients in $K$ of $T^q_0(x,\xi),T^q_1(x,\xi),\dots,T^q_{n+m}(x,\xi)$
(\cf \cite[\S 1.3]{DVdwork}).
It  follows that we can define the ring:
$$
K[[x-\xi]]_q=
\l\{\sum_{n\geq 0}a_n T^q_n(x,\xi): a_n\in K\r\}\,,
$$
with the obvious sum and the Cauchy product described above, extended by linearity.
The ring $K[[x-\xi]]_q$ is a $q$-difference algebra with the natural action of $\dq$.
Notice that in general it makes no sense to look at the sum of those series. Nevertheless,
they can be evaluated at the point of the set $\xi q^{\Z_{\geq 0}}$, and they are actually
in bijective correspondence with the sequences $\{f(\xi q^n)\}_{n\in\Z_{\geq 0}}\in\C^\N$.

\begin{prop}\label{prop:fouriersinguno}
Let ${\mathcal N}\in K\l[x,\dq\r]$ be a linear $q$-difference operator such that
$NRP_{\dq}({\mathcal N})$ has only the zero slope at $\infty$\footnote{or equivalently,
$NRP_{\dq}({\mathcal N})$ has no negative slopes.};
then the operator $\cF_{q^+}{\mathcal N}$ has a basis of solution in $K[[z-\xi]]_p$ for all
$\xi\in K^\ast$.
\end{prop}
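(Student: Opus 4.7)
The plan is to show that under the hypothesis every $\xi\in K^{\ast}$ is an ordinary point of $\cF_{q^+}(\cN)$, and then construct a full basis of $\mu:=\deg_{\dq}\cN$ linearly independent formal solutions in $K[[z-\xi]]_p$ by the method of undetermined coefficients.

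First I would translate the hypothesis into structural information on the coefficients of $\cF_{q^+}(\cN)$. Using the commutation $\dep z=pz\dep+1$, write $\cF_{q^+}(\cN)=\sum_{j=0}^{\mu}c_j(z)\dep^{j}$ with $c_j\in K[z]$. By Proposition~\ref{prop:newtonfourier} the Newton polygon $NRP_{\dep}(\cF_{q^+}\cN)$ is the image of $NRP_{\dq}(\cN)$ under $(u,v)\mapsto (u+v,-v)$. The hypothesis means that the top edge of $NRP_{\dq}(\cN)$ is horizontal; under the bijection this horizontal edge is sent to the \emph{bottom} edge of $NRP_{\dep}(\cF_{q^+}\cN)$, which is therefore also horizontal. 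In concrete terms this forces $\ord_z c_j(z)$ to be constant on all vertices lying on the lower edge; after factoring out the common power of $z$, the normalised $\tilde c_j(z)$ satisfy $\tilde c_{\mu}(0)\neq 0$ and $\tilde c_0(0)\neq 0$.

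Second, I would push this further to show that $c_{\mu}(z)$ is in fact a monomial in $z$, so that it has no root in $K^{\ast}$. This is the crucial geometric step: because \emph{every} slope at infinity of $\cN$ equals $0$, the rightmost column of $NRP_{\dq}(\cN)$ collapses to a single vertex at maximal height, and tracing through the explicit Fourier formulas (\ie applying $\cF_{q^+}$ monomial-by-monomial to the terms of $\cN$ contributing to this vertex) one sees that the corresponding Fourier image contributes to $c_{\mu}(z)$ a single nonzero scalar multiple of a single power of $z$, with no other monomial of $\cN$ producing a competing term. The analogous statement for $c_0$ follows by the same inspection.

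Third, with this in hand any $\xi\in K^{\ast}$ avoids the zeros of $c_{\mu}$ and $c_0$, and so does every forward iterate $p^{n}\xi$ for $n\geq 0$. Writing a candidate solution as $f=\sum_{n\geq 0}f_n T^p_n(z,\xi)\in K[[z-\xi]]_p$ and using the identities $\dep T^p_n(z,\xi)=[n]_p T^p_{n-1}(z,\xi)$ and $z\,T^p_n(z,\xi)=T^p_{n+1}(z,\xi)+p^n\xi\,T^p_n(z,\xi)$, the equation $\cF_{q^+}(\cN)\,f=0$ becomes a $(\mu{+}1)$-term linear recurrence on the sequence $(f_n)_{n\geq 0}$ whose leading coefficient at step $n$ is a nonzero $p$-factorial multiple of $c_{\mu}(p^n\xi)$. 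By the second step this is never zero, so the recurrence uniquely extends any choice of initial data $(f_0,\dots,f_{\mu-1})\in K^{\mu}$ to a formal solution, yielding the desired $\mu$-dimensional space of solutions in $K[[z-\xi]]_p$.

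The main obstacle is the second step: the Newton polygon is a coarse invariant and in general only constrains the degree and order of each $c_j(z)$, not the location of its roots. One must therefore carry out the Fourier bookkeeping on the monomials of $\cN$ sitting on the top edge of $NRP_{\dq}(\cN)$ to verify that no unexpected cancellations allow $c_{\mu}$ (or $c_0$) to acquire roots in $K^{\ast}$. Without the slope-$0$-at-infinity hypothesis one would expect the operator $\cF_{q^+}\cN$ to pick up finite singularities and the statement would fail; this is precisely where the hypothesis is used.
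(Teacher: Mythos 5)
Your proposal follows the paper's route: argue that the leading $\dep$-coefficient of $\cF_{q^+}(\cN)$ is a monomial in $z$, hence nonvanishing on $K^\ast$, and then solve the resulting recurrence for the coefficients of $\sum_n f_n T^p_n(z,\xi)$. A few points need straightening out, though. First, a dimension mismatch: writing $\cN=\sum_{i\le\nu,\,j\le N}a_{i,j}\,x^j\dq^i$, one has $\deg_{\dep}\cF_{q^+}(\cN)=N$, the maximal $x$-degree of $\cN$, and \emph{not} $\nu=\deg_{\dq}\cN$; since $\cF_{q^+}$ exchanges the roles of $x$ and $\dq$, your $\mu$ conflates the two, and the basis you construct must have $N$ elements, not $\nu$. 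Second, the "obstacle" you flag in step two is not actually an obstacle, because the hypothesis is exactly what removes it: "no negative slopes" translates to $a_{i,N}=0$ for all $i<\nu$ and $a_{\nu,N}\ne0$, so $a_{\nu,N}x^N\dq^\nu$ is the \emph{unique} monomial of $\cN$ of maximal $x$-degree. Since $\cF_{q^+}(x^j\dq^i)=(-p\dep)^j\circ z^i$ has $\dep$-degree exactly $j$, only this single monomial can contribute to the coefficient of $\dep^N$, and one reads off directly $c_N(z)=a_{\nu,N}\cdot(\text{nonzero scalar})\cdot z^\nu$; there is no cancellation to worry about. Your intermediate picture — that the rightmost column of $NRP_{\dq}(\cN)$ collapses to a single vertex at maximal height — is not what the hypothesis gives (it would force $a_\nu(x)$ itself to be a monomial) and is not needed. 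Finally, the claim that $c_0$ is also a monomial is false in general (e.g.\ $\cN=x\dq^2+\dq+1$ satisfies the hypothesis and gives $c_0(z)=(1-p-p^2)z+1$), but fortunately it plays no role: only the nonvanishing of $c_N$ along the forward $p$-orbit $\{p^n\xi\}_{n\ge0}$ enters the recurrence.
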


\begin{proof}
The hypothesis on the Newton Polygon of ${\mathcal N}$ at $\infty$
implies that we can write $\cN$ in the following form
$$
{\mathcal N}=\sum_{i=0}^\nu\sum_{j=0}^N a_{i,j}x^j\dq^i\,,
$$
with $a_{i,N}=0$ for all $i=0,\dots,\nu-1$ and $a_{\nu,N}\neq 0$.
This implies that the coefficient of $\dep^N$ in
$$\begin{array}{rcl}
\cF_{q^+}({\mathcal N})
&=&\ds\sum_{i=0}^\nu\sum_{j=0}^N  a_{i,j}
  \l(-p\dep\r)^j\circ z^i\\
&=&\ds\sum_{j=0}^{N-1} \sum_{i=0}^\nu c_{j,i}z^i \dep^j
  + a_{\nu,N}\l(-q\r)^{\nu-N}z^\nu\dep^N
\end{array}
$$
does not have any zero in the set $\{q^n\xi:n\in\Z_{>0}\}$.
Using the fact that $\dep T^p_n(z,\xi)=[n]_pT^p_{n-1}(z,\xi)$ and
that $zT^p_n(z,\xi)=T^p_{n+1}(z,\xi)+p^n\xi T^p_n(z,\xi)$, a basis of solutions of
$\cF_{q^+}({\mathcal L})$ in $K[[z-\xi]]_p$ can be constructed working with the recursive
relation induced by $\cF_{q^+}({\mathcal L})y=0$ on the coefficients of a generic solution of the form
$\sum_n \a_nT^p_n(z,\xi)$.
\end{proof}

\begin{cor}
For any $\cN\in K[z,\dep]$ (resp. $\cN\in K[x,\dq]$, $\cN\in K[z,\dep]$) having only the zero slope at
$\infty$,
the operator $\cF_{q^+}^{-1}({\mathcal N})$
(resp. $S\circ\cF_{q^+}({\mathcal N})$, $S\circ \cF_{q^+}^{-1}({\mathcal N})$)
has a basis of solution in $K\[[x-\xi\]]_q$ for any $\xi\in K^\ast$.
\end{cor}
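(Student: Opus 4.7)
The plan is to reduce each of the three cases to Proposition~\ref{prop:fouriersinguno} via two algebraic manipulations already recorded in the paper: the identity $\cF_{q^+}^{-1}=\la\circ\cF_{p^+}$ noted after the definition of the formal Fourier transforms, and the substitution $S:z\mapsto 1/x$ of \eqref{eq:Fourier+sym}. Each will be seen to induce an explicit $K$-algebra isomorphism between Tate-type rings of the form $K[[\,\cdot-\xi\,]]_\bullet$ intertwining the relevant $\dq$- and $\dep$-module structures, so that a basis of formal solutions produced by Proposition~\ref{prop:fouriersinguno} can be transported to the space announced in the corollary.

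For $\cF_{q^+}^{-1}(\cN)$ with $\cN\in K[z,\dep]$, I would apply Proposition~\ref{prop:fouriersinguno} with the roles of $q$ and $p=q^{-1}$ exchanged: its proof is symmetric in $q$ and $p$, and the hypothesis on the slopes at $\infty$ is preserved. This yields a basis of solutions of $\cF_{p^+}(\cN)\in K[x,\dq]$ in $K[[x-\eta]]_q$ for every $\eta\in K^\ast$. The automorphism $\la$ corresponds to the change of variable $x\mapsto -qx$ on functions, and a direct expansion of the defining product gives
\[
T^q_n(-qx,\eta)=(-q)^n\,T^q_n\!\left(x,-\eta/q\right),
\]
so that $g(x)\mapsto g(-qx)$ is a $K$-algebra isomorphism $K[[x-\eta]]_q\xrightarrow{\sim}K[[x-(-\eta/q)]]_q$ intertwining operator actions via $\la$. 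Hence $\cF_{q^+}^{-1}(\cN)=\la(\cF_{p^+}(\cN))$ has a basis of solutions in $K[[x-\xi]]_q$, with $\xi=-\eta/q$ ranging over all of $K^\ast$.

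For $S\circ\cF_{q^+}(\cN)$ with $\cN\in K[x,\dq]$, Proposition~\ref{prop:fouriersinguno} directly provides a basis of solutions $g(z)$ of $\cF_{q^+}(\cN)$ in $K[[z-\eta]]_p$. Pulling back along $z=1/x$ yields $g(1/x)$, a solution of $(S\circ\cF_{q^+})(\cN)$ because the substitution $S$ acts on operators exactly as it does on functions. To locate $g(1/x)$ in a ring $K[[x-\mu]]_q$, I would combine the identity
\[
T^p_n(1/x,\eta)=(-\eta)^n\,p^{n(n-1)/2}\,x^{-n}\,T^q_n(x,1/\eta),
\]
obtained by rewriting each factor as $1/x-p^i\eta=-p^i\eta(x-q^i/\eta)/x$, with the observation that $1/x\in K[[x-1/\eta]]_q$ for any $\eta\in K^\ast$: its coefficients on the basis $T^q_n(x,1/\eta)$ are determined by the recursion coming from $xT^q_n(x,\mu)=T^q_{n+1}(x,\mu)+q^n\mu\,T^q_n(x,\mu)$. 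Together these place $g(1/x)\in K[[x-1/\eta]]_q$, and $1/\eta$ ranges over $K^\ast$ as $\eta$ does.

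The third case $S\circ\cF_{q^+}^{-1}(\cN)$ is then obtained by composing the previous two arguments: apply Proposition~\ref{prop:fouriersinguno} (with $q,p$ exchanged) to $\cF_{p^+}(\cN)$, then transport the solutions successively through $\la$ and $S$. The hypothesis on the slopes at $\infty$ is used only to invoke Proposition~\ref{prop:fouriersinguno}; all subsequent steps are purely formal. The main (mild) technical point I anticipate is the bookkeeping in the $S$-step: one must verify that the formal sum $g(1/x)=\sum a_n T^p_n(1/x,\eta)$ rearranges coherently onto the $T^q_n(x,1/\eta)$-basis. This is automatic from the bijective correspondence $K[[x-\mu]]_q\simeq K^{\Z_{\geq 0}}$ given by evaluation at $\{\mu q^n:n\geq 0\}$, recalled just before Proposition~\ref{prop:fouriersinguno}.
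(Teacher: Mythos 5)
Your argument is correct and follows exactly the two-step reduction that the paper uses: the identity $\cF_{q^+}^{-1}=\la\circ\cF_{p^+}$ together with the observation that $S:z\mapsto 1/x$ preserves the type of singularity at points of $K^\ast$. You merely spell out the explicit identities $T^q_n(-qx,\eta)=(-q)^nT^q_n(x,-\eta/q)$ and $T^p_n(1/x,\eta)=(-\eta)^np^{n(n-1)/2}x^{-n}T^q_n(x,1/\eta)$ that justify transporting solutions between the rings $K[[\cdot-\xi]]_\bullet$, details the paper leaves implicit.
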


\begin{proof}
The statement follows from the remark that $\cF_{q^+}^{-1}({\mathcal N})=\la\circ\cF_{p^+}({\mathcal N})$
and that the symmetry $S:z\mapsto 1/x$ does not changes the kind of singularity at the points of
$K^\ast$.
\end{proof}

An analogous property holds for $\cF_{q^\#}^{-1}$:

\begin{prop}\label{prop:fouriersingdue}
Let ${\mathcal L}=\sum_{i=0}^\nu a_i\l({1\over z}\r)\sgp^i
\in K\l[{1\over z},\sgp\r]$ such that
$\deg_{1\over z}a_i({1\over z})\leq i$.
We suppose that
$$
N=\ord_{1\over z}a_\nu\l({1\over z}\r)\leq \ord_{1\over z}a_i\l({1\over z}\r)\,,
$$
for all $i=0,\dots,\nu-1$\footnote{or equivalently, that
$NRP_{\dq}(S\circ\cL)$ does not have any positive slope.}.
Then $\cF_{q^\#}^{-1}({\mathcal L})$ has a basis of solution in $K[[x-\xi]]_q$ for all
$\xi\in K^\ast$.
\end{prop}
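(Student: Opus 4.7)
The plan is to mirror the argument of Proposition \ref{prop:fouriersinguno}, this time with $\cF_{q^\#}^{-1}$ acting on the $\sgq$-presentation rather than with $\cF_{q^+}$ acting on the $\dq$-presentation.

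First I would compute $\cF_{q^\#}^{-1}(\cL)$ explicitly. From the identities $\cF_{q^\#}(x)=\frac{1}{qz}\sgp$ and $\cF_{q^\#}(\sgq)=p\sgp$, one reads off $\cF_{q^\#}^{-1}(1/z)=x\sgq^{-1}$ and $\cF_{q^\#}^{-1}(\sgp)=q\sgq$; iterating the first identity gives the operator identity $(x\sgq^{-1})^{j}=q^{-j(j-1)/2}x^{j}\sgq^{-j}$. Writing $a_i(1/z)=\sum_{j}c_{i,j}\,z^{-j}$, with $c_{i,j}\ne 0$ only when $N\le j\le i$ and $c_{\nu,N}\ne 0$ by hypothesis, one obtains
$$
\cF_{q^\#}^{-1}(\cL)=\sum_{i=N}^{\nu}\sum_{j=N}^{i}c_{i,j}\,q^{\,i-\frac{j(j-1)}{2}}\,x^{j}\,\sgq^{\,i-j}.
$$
This displays $\cF_{q^\#}^{-1}(\cL)$ as a $\sgq$-operator of order $\nu-N$ whose shifts range over $\{0,1,\dots,\nu-N\}$, and whose maximal shift $\sgq^{\nu-N}$ arises solely from the index $(i,j)=(\nu,N)$, with coefficient $c_{\nu,N}\,q^{\nu-\frac{N(N-1)}{2}}x^{N}$.

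Next I would exploit the bijection, recalled just before Proposition \ref{prop:fouriersinguno}, between a series $y=\sum_{n\ge 0}\alpha_n T^q_n(x,\xi)\in K[[x-\xi]]_q$ and the sequence of its values $\{y(\xi q^m)\}_{m\ge 0}$. Evaluating the equation $\cF_{q^\#}^{-1}(\cL)y=0$ at $x=q^m\xi$ produces
$$
\sum_{i,j}c_{i,j}\,q^{\,i-\frac{j(j-1)}{2}}\,(q^m\xi)^{j}\,y(q^{m+i-j}\xi)=0,
$$
a linear recurrence in which the coefficient of the highest-index unknown $y(q^{m+\nu-N}\xi)$ equals $c_{\nu,N}\,\xi^{N}q^{\nu-\frac{N(N-1)}{2}+mN}$. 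Since $\xi\in K^{*}$, this coefficient is nonzero for every $m\ge 0$, so once initial data $y(\xi),\dots,y(q^{\nu-N-1}\xi)\in K$ are chosen freely, the recurrence determines all subsequent values $y(q^m\xi)$ uniquely, and hence a unique element of $K[[x-\xi]]_q$. This produces a $(\nu-N)$-dimensional space of formal solutions, which is a full basis since $\cF_{q^\#}^{-1}(\cL)$ has order $\nu-N$.

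The only delicate point — and the only place the hypothesis $\ord_{1/z}a_\nu\le\ord_{1/z}a_i$ is used — is the claim that the maximal shift is attained by a single monomial: from $N\le j\le i\le \nu$, the equation $i-j=\nu-N$ forces $i=\nu$ and $j=N$. With that granted, the recurrence is nonsingular for every $m\ge 0$ and the construction of the basis proceeds exactly as in Proposition \ref{prop:fouriersinguno}. I expect no obstacle beyond keeping careful track of the $q$-powers that come from commuting $x$ past $\sgq^{-1}$.
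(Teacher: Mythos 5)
Your argument is correct and follows essentially the same approach as the paper: compute $\cF_{q^\#}^{-1}(\cL)$, observe that the coefficient of the top shift $\sgq^{\nu-N}$ is a single monomial (up to a constant, $a_{\nu,N}\,q^{\nu-N(N-1)/2}x^N$) and hence does not vanish on the $q$-orbit of any $\xi\in K^\ast$, and then build the solution in $K[[x-\xi]]_q$ by a nonsingular recursion, exactly as in the proof of Proposition~\ref{prop:fouriersinguno}. The only cosmetic difference is that you run the recursion on the values $y(q^m\xi)$ rather than on the coefficients $\alpha_n$ of a generic $\sum_n\alpha_n T^q_n(x,\xi)$; these two viewpoints are equivalent by the bijection between $K[[x-\xi]]_q$ and sequences of values on $\xi q^{\Z_{\geq 0}}$ recalled just before Proposition~\ref{prop:fouriersinguno}. (Your explicit $q$-power $q^{\nu-N(N-1)/2}$ is absent from the paper's displayed formula for $\cF_{q^\#}^{-1}(\cL)$, but this is immaterial since only the nonvanishing of that coefficient is used.)
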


\begin{proof}
We call $a_{\nu,N}\in K$ the coefficients of ${1\over z}^N$ in
$a_\nu\l({1\over z}\r)$. Then we have:
$$
\cF_{q^\#}^{-1}({\mathcal L})=\sum_{i=0}^{\nu-N-1}b_i(x)\sgq^i+
a_{\nu,N}x^N\sgq^{\nu-N}\,.
$$
One ends the proof as above.
\end{proof}


\section{Structure theorems}
\label{sec:gev}

Inspired by \cite{andreannalsI},
we want to characterize $q$-difference operators
killing a global $q$-Gevrey series of orders $(-s_1,-s_2)$,  with
$(s_1,s_2)\in\cZ:=\Q_{\geq 0}\times\Z_{\geq 0}\smallsetminus\{(0,0)\}$.

\smallskip
The skew polynomial ring $K(x)[\dq]$ is euclidean with respect to
$\deg_{\dq}$.
It follows that, if we have a formal power series $y$ solution of
a $q$-difference operator, we can find a $q$-difference operator $\cL$
killing $y$ and such that $\deg_{\dq}\cL$ is minimal.
All the other linear $q$-difference
operators killing $y$, minimal with respect to $\deg_{\dq}$, are of the form
$f(x)\cL$, with $f(x)\in K(x)$.
By abuse of language, we will call the minimal degree operator $\cL\in K[x,\dq]$
(resp. $K[x,\sgq]$) with no common factors in the coefficients \emph{the} minimal
operator killing $y$.

\begin{rmk}
Let $y(x)\in K[[x]]$ be a formal power series solution of the
linear $q$-difference operator
$\cL_q=\sum_{i=0}^\nu a_i(x)\sgq^i$.
We choose $\cL_q$ such that
$\deg_{\sgq}\cL_q$ is minimal.
Then for all positive integers $r$ the operator
$\cL_{q^{1/r}}=\sum_{i=0}^\nu a_i(x)\sg_{q^{1/r}}^{ir}$ is the
minimal $q^{1/r}$-difference operator killing $y(x)$.
Moreover if $\la$ is a slope of $NRP_{\sgq}\l(\cL_q\r)$
(resp. $NRP_{\dq}\l(\cL_q\r)$)
then $\la/r$ is a slope of $NRP_{\sgq}\l(\cL_{q^{1/r}}\r)$
(resp. $NRP_{\dq}\l(\cL_{q^{1/r}}\r)$).
In fact, let $\cL$ be a $q^{1/r}$-difference operator killing $y(x)$, minimal with respect to
$\deg_{\sg_{q^{1/r}}}$. Then $\cL_{q^{1/r}}$ is a factor of $\cL$ in $K(x)[\sg_{q^{1/r}}]$,
hence $\deg_{\sg_{q^{1/r}}}\cL\leq r\deg_{\sgq}\cL_q$.
On the other side we have:
$$
\dim_{K(x)}\sum_{i\geq 0}K(x)\sg_{q^{1/r}}^i(y)\geq
\dim_{K(x)}\sum_{i\geq 0}K(x)\sg_{q^{1/r}}^{ir}(y)=
\dim_{K(x)}\sum_{i\geq 0}K(x)\sgq^i(y)\,,
$$
therefore $\deg_{\sg_{q^{1/r}}}\cL\geq r\deg_{\sgq}\cL_q$.
\end{rmk}

We recall the statement of Corollary \ref{cor:regularity}, which is the starting point for this second
part of the paper:

\begin{prop}\label{prop:regop}
Let $F\in K[[x]]$ be a global $q$-Gevrey series of orders $(0,0)$ and
$\cL\in K[x,\dq]$ the minimal $q$-difference operator
such that $\cL F=0$. Then $\cL$ is regular singular.
\end{prop}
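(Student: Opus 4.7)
The plan is to recognize that Proposition \ref{prop:regop} is essentially Corollary \ref{cor:regularity} reformulated in the $\dq$-presentation, and to reduce it via the cyclic vector construction to Theorems \ref{thm:chudnovsky} and \ref{thm:regularity}. First, a global $q$-Gevrey series of orders $(0,0)$ is by definition a $G_q$-function. Next, since $K(x)[\dq]=K(x)[\sgq]$ with $\deg_{\dq}=\deg_{\sgq}$, we may freely rewrite $\cL$ as $\cL=\sum_{i=0}^\nu a_i(x)\sgq^i\in K[x,\sgq]$ with $a_\nu\neq 0$, $\nu=\deg_{\dq}\cL$, and this is still a minimal-order annihilator of $F$ over $K(x)$. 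Let $\cM=(M,\Sgq)$ be the $q$-difference module over $K(x)$ associated with the companion system of $\cL$ in a basis $\underline{e}$ with respect to which $\vec y={}^t(F,\sgq F,\dots,\sgq^{\nu-1}F)$ solves the companion system $\vec y(qx)=A_1(x)\vec y(x)$.

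The two hypotheses of Theorem \ref{thm:chudnovsky} have to be checked for $\vec y$. For injectivity: a nontrivial $K(x)$-linear relation $\sum c_i(x)\sgq^iF=0$ would, after clearing denominators, provide an operator in $K[x,\sgq]$ of order strictly less than $\nu$ annihilating $F$, contradicting the minimality of $\cL$. For the $G_q$-function property of each $\sgq^iF$: if $F=\sum_{n\geq 0}a_nx^n$, then $\sgq^iF=\sum_{n\geq 0} q^{in}a_nx^n$, and
$$
\sg(\sgq^iF)\leq \sg(F)+i\cdot\limsup_{n\to\infty}\frac{1}{n}\sum_{v\in\cP}\lgp\sup_{s\leq n}|q^s|_v,
$$
where the second term is finite because $|q|_v=1$ for $v\in\cP_f$ (so these places contribute nothing) and $\cP_\infty$ is finite (so its contribution grows only linearly in $n$). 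Thus each $\sgq^iF$ has finite size, and is obviously still annihilated by $\cL$, hence is a $G_q$-function.

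Applying Theorem \ref{thm:chudnovsky} gives that $\cM$ is a $G$-$q$-difference module. Theorem \ref{thm:regularity} then yields that $\cM$ is regular singular, both at $0$ and at $\infty$. Regular singularity of the module $\cM$ translates (cf. Corollary \ref{cor:regularity}) into the fuchsian condition on the coefficients of the minimal operator $\cL$, which is exactly the meaning of ``$\cL$ is regular singular.''

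The one non-formal point is the verification that $\sgq$ preserves the class of $G_q$-functions (equivalently, that $G_q$-functions are stable under the natural $q$-difference structure); this is the small piece of bookkeeping singled out above and, together with the minimality argument for injectivity, is where the work of the proof actually lies. Once those two verifications are in hand, the statement follows by concatenation of Theorem \ref{thm:chudnovsky} and Theorem \ref{thm:regularity}.
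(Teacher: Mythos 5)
Your proof is correct and follows the same route the paper takes: recognize that ``orders $(0,0)$'' means $G_q$-function, pass to the companion system so that $\vec y={}^t(F,\sgq F,\dots,\sgq^{\nu-1}F)$ is a solution vector, check injectivity via minimality of $\cL$ and finiteness of $\sg(\sgq^i F)$, then concatenate Theorem~\ref{thm:chudnovsky} with Theorem~\ref{thm:regularity}. The paper simply records the statement as a restatement of Corollary~\ref{cor:regularity} without spelling out these verifications, so you have essentially reconstructed the intended argument in full detail.
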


Using the formal $q$-Fourier transformations introduced in the previous section,
we will deduce the structure
theorems below from Proposition \ref{prop:regop}.

%
%
%

\begin{thm}\label{thm:sing}
Let $F\in K[[x]]\smallsetminus K[x]$ be
a global $q$-Gevrey series of orders $(-s_1,-s_2)$,
with $(s_1,s_2)\in\cZ$
and $\cL\in K\l[x,\dq\r]$ be the minimal linear $q$-difference operator
such that $\cL F=0$.
Then $\cL$ has the following properties:
\par\noindent
- the set of finite slopes of the Newton Polygon $NRP_{\dq}(\cL)$ is
$\l\{-1/(s_1+s_2),0\r\}$;
\par\noindent
- for all $\xi\in K^\ast$,
the $q$-difference operator $\cL$ has a basis
of solutions in $K[[x-\xi]]_q$.
\end{thm}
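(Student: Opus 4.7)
The strategy is to bridge $\cL$ (the minimal operator of $F$) to the minimal operator of the associated $G_q$-function
\[
G(x) := \sum_{n\geq 0} a_n\, q^{s_1 n(n-1)/2}\, [n]_q^{!\,s_2}\, x^n,
\]
which is a $G_q$-function by hypothesis, and to transport the slope and solution data through iterated formal Fourier transformations. By Proposition \ref{prop:regop}, the minimal operator $\cL_G \in K[x,\dq]$ of $G$ is regular singular: the only finite slope of $NRP_{\dq}(\cL_G)$ is $0$, with vertical sides at the extremities.

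The bridge between $\cL$ and $\cL_G$ is built as follows. At the series level, $G$ is obtained from $F$ by inserting the factor $q^{s_1 n(n-1)/2}$, which is the net effect of iterating the Borel transformation $(\cdot)^\#$ of Lemma \ref{lemma:relazionilaplace}, together with the factor $[n]_q^{!\,s_2}$, the net effect of iterating $(\cdot)^+$; each iteration must be composed with the symmetry $S:z\mapsto 1/x$ to remain in $K[[x]]$. At the operator level, the correspondence between Fourier transformations on operators and Borel transformations on series then implies that, after $s_1$ applications of $\cF_{q^\#}$ and $s_2$ applications of $\cF_{q^+}$ (suitably composed with $S$), the operator $\cL$ transforms into an operator admitting $\cL_G$ as a right factor, up to the auxiliary $\dq^\nu$-factors appearing in the Fourier--Borel relations.

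The slopes of $NRP_{\dq}(\cL)$ are then recovered by pulling back through these Fourier iterations via Corollary \ref{cor:newtonfourier}: each Fourier sends a finite slope $\la$ to $-\la/(1+\la)$ and interchanges $\infty$ with $-1$, while slope $0$ is preserved throughout. Starting from the unique finite slope $0$ of $\cL_G$ (which pulls back to $0$ in $\cL$) and the vertical sides (slope $\infty$) of $NRP_{\dq}(\cL_G)$, the $s_1+s_2$ iterations produce exactly one new finite slope; combining the Fourier chain with the scaling Lemma \ref{lemma:cambiordine} (which multiplies slopes by $1/r$ under $q\mapsto q^{1/r}$) and the rescaling discussion of Section \ref{sec:gev} to absorb the $s_1$ contribution, a direct computation identifies this slope as $-1/(s_1+s_2)$, so that the finite slopes of $NRP_{\dq}(\cL)$ form exactly the set $\{-1/(s_1+s_2),\,0\}$.

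For the solution statement at $\xi\in K^\ast$, the fact that $\cL_G$ is regular singular means it has no negative slope at $\infty$, so Propositions \ref{prop:fouriersinguno} and \ref{prop:fouriersingdue} apply and furnish bases of solutions of the Fourier transforms of $\cL_G$ in the spaces $K[[z-\xi]]_p$ and $K[[x-\xi]]_q$; unwinding the Fourier chain and the symmetries $S$ produces the required basis of solutions of $\cL$ in $K[[x-\xi]]_q$. The principal obstacle is the third step: since a single Fourier is an involution on finite slopes modulo $\infty\leftrightarrow -1$, the genuinely fractional slope $-1/(s_1+s_2)$ does not arise from iteration alone but only upon coordinating the iterations with the slope-rescaling of Lemma \ref{lemma:cambiordine}; the bookkeeping is particularly delicate in the mixed case $s_1,s_2>0$, where the scaling does not act cleanly on the $[n]_q^!$ factor.
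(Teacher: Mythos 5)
Your proposal takes the same broad route as the paper — pass from $F$ to the associated $G_q$-function, invoke Proposition~\ref{prop:regop} for regularity, then transport slopes and solution spaces through iterated formal Fourier transformations composed with $S$ — but there are two genuine gaps, one in the mechanism you invoke and one in a step you omit entirely.

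First, the mechanism. You correctly observe that a single $q$-Fourier transformation acts as an involution on finite slopes (modulo $\infty\leftrightarrow -1$), and you conclude that the fractional slope $-1/(s_1+s_2)$ must therefore come from Lemma~\ref{lemma:cambiordine}. That is not how the paper generates it. The symmetry $\mathcal{S}:x\mapsto 1/z$, $\sgq\mapsto\sgp$ that must be interleaved between consecutive Fourier steps reflects the Newton--Ramis polygon across the line $v=0$, i.e. replaces a finite slope $\la$ by $-\la$. The composite $\cF^{-1}\circ\mathcal{S}$ therefore sends $-1/k$ to $(1/k)\mapsto -\frac{1/k}{1+1/k}=-1/(k+1)$, so the slope accumulates to $-1/(s_1+s_2)$ purely by iteration, with no rescaling involved. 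Lemma~\ref{lemma:cambiordine} and the discussion in \S\ref{subsec:rescaling} play a different, purely auxiliary role: $s_1\in\Q_{\geq 0}$ may be non-integral, and one passes to $q^{1/r}$ to make the number of $\cF_{q^\#}^{-1}\circ\mathcal{S}$ iterations an integer. It never has to touch the $[n]_q^!$ factor, because $s_2\in\Z_{\geq 0}$ by definition of $\cZ$ and those iterations are run first; the ``delicate mixed case'' you worry about does not arise.

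Second, and more seriously, your argument stops at the slopes of an auxiliary operator and never returns to the minimal operator $\cL$. The iterated Fourier chain produces an operator $\ol\cL$ with $\ol\cL F=0$ and finite slopes $\{0,-1/(s_1+s_2)\}$, but you must still show that $\cL$ itself carries both slopes. The paper argues: $\cL$ is a right factor of $\ol\cL$ in $K(x)[\sgq]$, so (by the uniqueness of the slope filtration, \cite{sauloyfiltration}) the slopes of $\cL$ at $0$ and at $\infty$ are among those of $\ol\cL$; moreover $F\notin K[x]$ and the growth theorem \cite[Thm.~4.8]{RamisToulouse} forces $\cL$ to have a strictly positive slope at $\infty$, hence the slope $-1/(s_1+s_2)$ actually appears in $NRP_{\dq}(\cL)$. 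Without this minimality-plus-growth step the conclusion that the slope set is \emph{exactly} $\{-1/(s_1+s_2),0\}$, and the transfer of the $K[[x-\xi]]_q$-bases from $\ol\cL$ down to $\cL$, are both unjustified. One smaller bookkeeping point: the object the paper actually starts from is not $\cL_G$ but a regular singular operator killing the shifted series $\widetilde F=\sum a_n x^{n+s_2}$, and at the $s_1$-stage one precomposes with $x^{s_1}$ and postcomposes with $\sgq^n$; these degree adjustments are what make the final operator kill $F$ on the nose rather than ``up to auxiliary $\dq^\nu$-factors,'' which your sketch leaves dangling.
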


\begin{proof}
Let us write the formal power series $F$ in the form:
$$
F=\sum_{n=0}^\infty {a_n\over\l(q^{n(n-1)\over 2}\r)^{s_1}\l([n]_q^!\r)^{s_2}}x^n\,,
$$
where $\sum_{n=0}^\infty a_nx^n$ is a $G_q$-function.
Let $\widetilde F(x)=\sum_{n=0}^\infty a_nx^{n+s_2}$;
then the series $\widetilde F$ has finite size, therefore there exists a regular singular
$q$-difference operator $\cL\in K[x,\sgq]$ such that $\cL\widetilde F=0$.
The polygon $NRP_{\sgq}(\cL)$ has only the zero slope (apart from the infinite slopes).
\par
Let $\mathcal S$ be the symmetry with respect to the origin:
$$\begin{matrix}
\mathcal S:& x&\longmapsto& 1/z\\
 &\sgq & \longmapsto & \sgp
\end{matrix}
\,.
$$
Remark that the operator $\cF_{q^+}^{-1}\circ \mathcal S\l(\cL\r)$
kill the formal power series
$\sum_{n=0}^\infty {a_n\over [n]_q^!}x^{n+s_2-1}$.
The polygon $NRP_{\dep}(\mathcal S(\cL))$ is obtained
by $NRP_{\dq}(\cL)$ applying
a symmetry with respect to the line $v=0$.
It follows from Proposition \ref{cor:newtonfourier} that
the set of finite slopes of $NRP_{\dq}\big(\cF_{q^+}^{-1}\circ \mathcal S\l(\cL\r)\big)$
is $\{0,-1\}$.
Iterating $s_2$ times this reasoning, we obtain a $q$-difference operator
$\widetilde\cL=
\cF_{q^+}^{-1}\circ \mathcal S\circ\dots\circ\cF_{q^+}^{-1}\circ \mathcal S\l(\cL\r)$,
such that the set of finite slopes of $NRP_{\dq}\l(\widetilde\cL\r)$ is
$\{0,-1/s_2\}$.
We obtain:
$$
\widetilde\cL\l(\sum_{n=0}^\infty {a_n\over\l([n]_q^!\r)^{s_2}}x^{n}\r)=0\,.
$$
Because of \S\ref{subsec:rescaling},
we can now suppose that $s_1$ is actually a positive
integer.
We conclude the proof applying the same argument to
$\ol\cL=
\l(\cF_{q^\#}^{-1}\circ\circ \mathcal S\r)\circ\dots\circ
\l(\cF_{q^\#}^{-1}\circ \mathcal S\r)\l(\sgq^n\circ\widetilde\cL\circ x^{s_1}\r)$,
for a suitable $n\in\Z_{\geq 0}$, and to the Newton-Ramis Polygon
defined with respect to $\sgq$.
We know that $\ol\cL F=0$.
\par
The operator $\cL$ is a factor of $\ol\cL$ in $K(x)[\sgq]$.
We know (\cf for instance \cite{sauloyfiltration})
that the slopes of the Newton Polygon of $\cL$ at zero
(resp. $\infty$) are slopes of the Newton Polygon of $\ol\cL$ at zero
(resp. $\infty$). To obtain the desired result on the slopes of
$NRP_{\dq}(\cL)$ one has to notice that $\ol\cL$ must have a positive slope at $\infty$
because of \cite[Theorem 4.8]{RamisToulouse}.
As far as $\xi\in K^\ast$
is concerned, the operator $\ol\cL$ has a basis
of solutions at $\xi$ in $K[[x-\xi]]_q$
(\cf Propositions \ref{prop:fouriersinguno} and \ref{prop:fouriersingdue}),
therefore the same is true for $\cL$.
\end{proof}

Proposition \ref{prop:newtonfourier} implies that for a global $q$-Gevrey series of orders $(-s_1,0)$
we have actually proved a more precise result:

\begin{thm}\label{thm:singbis}
Under the hypothesis of the previous theorem, we assume that $s_2=0$.
Then $\cL$ has the following properties:
\par\noindent
- the set of finite slopes of $NP_{\sgq}(\cL)$ is $\l\{0,-1/s_1\r\}$;
\par\noindent
- for all $\xi\in  K^\ast$,
the $q$-difference operator $\cL$ has a basis
of solutions in $K[[x-\xi]]_q$.
\end{thm}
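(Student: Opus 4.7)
The plan is to parallel the proof of Theorem \ref{thm:sing} but, since $s_2=0$, to use only the $\#$-Fourier machinery and to work throughout with the Newton--Ramis polygon with respect to $\sgq$ rather than $\dq$.

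\medskip

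First, write
$$
F=\sum_{n\geq 0}\frac{a_n}{q^{s_1 n(n-1)/2}}x^n,
$$
where $\sum_n a_n x^n$ is a $G_q$-function. By Proposition \ref{prop:regop}, the minimal operator $\cL_0\in K[x,\sgq]$ annihilating $\sum_n a_n x^n$ is regular singular; equivalently, $NRP_{\sgq}(\cL_0)$ has only the zero finite slope. Using the rescaling discussion of \S\ref{subsec:rescaling} together with the Remark following Lemma \ref{lemma:cambiordine} on the effect of passing to a $q^{1/r}$-difference operator on slopes (the slope is divided by $r$, and the conclusion rescales accordingly), I may reduce to the case where $s_1$ is a positive integer.

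\medskip

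Second, I would construct an explicit $\sgq$-operator annihilating $F$ by iterating $s_1$ times the composition $\cF_{q^\#}^{-1}\circ\mathcal{S}$, exactly as in the proof of Theorem \ref{thm:sing}. Concretely, set
$$
\overline{\cL}:=\bigl(\cF_{q^\#}^{-1}\circ\mathcal{S}\bigr)^{s_1}\bigl(\sgq^{m}\circ\cL_0\circ x^{s_1}\bigr),
$$
for some integer $m\gg 0$ chosen so that every inverse Fourier transform in the chain is defined (in the sense of the Remark following the definition of $\cF_{q^\#}$). Lemma \ref{lemma:relazionilaplace} and the compatibility of Fourier with Borel guarantee that $\overline\cL F=0$. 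Iterated application of Proposition \ref{prop:newtonfourier} and Corollary \ref{cor:newtonfourier}, together with the observation that $\mathcal{S}$ reflects $NRP_{\sgq}$ about the horizontal axis while premultiplication by $\sgq^{m}$ and postcomposition with $x^{s_1}$ introduce no new finite slopes, then shows by a direct induction that the finite slopes of $NRP_{\sgq}(\overline\cL)$ are exactly $\{0,-1/s_1\}$.

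\medskip

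Third, I would conclude. The minimal operator $\cL$ divides $\overline\cL$ in $K(x)[\sgq]$, so by the compatibility of slopes with factorization (cf.\ \cite{sauloyfiltration}) the finite slopes of $NRP_{\sgq}(\cL)$ form a subset of $\{0,-1/s_1\}$. The slope $-1/s_1$ must actually occur: otherwise $\cL$ would be regular singular, hence by Proposition \ref{prop:regop} and \cite[Theorem 4.8]{RamisToulouse} (applied as in the proof of Theorem \ref{thm:sing}) the series $F$ would be a $G_q$-function, contradicting $s_1>0$ and $F\notin K[x]$. The slope $0$ occurs because the regular-singular component carried by $\cL_0$ persists through the construction. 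Finally, Proposition \ref{prop:fouriersingdue} applied at each iteration shows that $\overline\cL$ has a basis of solutions in $K[[x-\xi]]_q$ at every $\xi\in K^{\ast}$, and this property descends to the factor $\cL$.

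\medskip

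The main obstacle will be the bookkeeping of the slope evolution under the iterated $\cF_{q^\#}^{-1}\circ\mathcal{S}$, and in particular checking that the preparatory multiplications by $\sgq^{m}$ and $x^{s_1}$ neither destroy the slope $0$ coming from $\cL_0$ nor prevent the slope $-1/s_1$ from appearing with the right multiplicity; a secondary subtlety is justifying that both slopes really survive in the minimal factor $\cL$ and not merely in $\overline\cL$, which is where the Ramis estimate of \cite[Theorem 4.8]{RamisToulouse} and the uniqueness of orders (proved just before Proposition \ref{prop:cambiordine}) are essential.
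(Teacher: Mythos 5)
Your approach mirrors the paper's exactly: the paper treats Theorem \ref{thm:singbis} as a free byproduct of the proof of Theorem \ref{thm:sing}, observing that when $s_2=0$ only the $\cF_{q^\#}^{-1}\circ\mathcal S$ iterations occur, and Proposition \ref{prop:newtonfourier} then gives a bijection of the full $NRP_{\sgq}$ polygons (not merely $NRP_{\dq}$), yielding the sharper conclusion. Your reconstruction — rescaling $s_1$ to a positive integer, iterating $\cF_{q^\#}^{-1}\circ\mathcal S$ on $\sgq^m\circ\cL_0\circ x^{s_1}$, tracking slopes through Proposition \ref{prop:newtonfourier} and Corollary \ref{cor:newtonfourier}, and descending to the minimal factor via the Ramis growth estimate and Proposition \ref{prop:fouriersingdue} — is precisely this argument written out.
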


Changing $q$ in $q^{-1}$ we get the corollary:

\begin{cor}
Let $F\in K[[x]]\smallsetminus K[x]$ be
a global $q$-Gevrey series of orders $(s_1,-s_2)$,
with $(s_1,s_2)\in\Q\times\Z$, such that $s_1\geq s_2\geq  0$
and either $s_1\neq s_2$ or $s_2\neq 0$.
Let $\cL\in K\l[x,\sgq\r]$ be the minimal linear $q$-difference operator
such that $\cL F=0$.
Then $\cL$ has the following properties:
\par\noindent
- the set of finite slope of $NP_{\dep}(\cL)$ is $\{0,1/s_1\}$
\par\noindent
- for all $\xi\in  K^\ast$,
the $q$-difference operator $\cL$ has a basis
of solutions in $K[[x-\xi]]_p$.
\end{cor}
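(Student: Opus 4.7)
The plan is to deduce this corollary from Theorem \ref{thm:sing} (or Theorem \ref{thm:singbis}) by changing $q$ into $p := q^{-1}$.

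First, I would invoke Proposition \ref{prop:cambiordine} with $t_1=s_1$ and $t_2=s_2$: the hypotheses $s_1 \geq s_2 \geq 0$ and $(s_1,s_2)\neq(0,0)$ are exactly those required, and its conclusion yields that $F$ is a global $p$-Gevrey series of (non-positive) orders $\bigl(-(s_1-s_2),-s_2\bigr)$, with $(s_1-s_2,s_2)\in\cZ$.

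Next, I would apply Theorem \ref{thm:sing} to $F$ regarded as a global $p$-Gevrey series (or Theorem \ref{thm:singbis} in the degenerate case $s_2=0$, which forces $s_1>0$ by the hypothesis that $(s_1,s_2)$ is not zero). Letting $\cL_p \in K[x,\dep]$ denote the minimal $p$-difference operator killing $F$, the conclusions give that the finite slopes of $NRP_{\dep}(\cL_p)$ are $\{0,-1/((s_1-s_2)+s_2)\}=\{0,-1/s_1\}$ and that $\cL_p$ admits a basis of solutions in $K[[x-\xi]]_p$ for every $\xi\in K^\ast$.

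The final step is to identify $\cL$ with $\cL_p$. Since $\sgp=\sgq^{-1}$, both operators live in the skew Laurent ring $K(x)[\sgq,\sgq^{-1}]$; they both annihilate $F$ and are minimal, so they generate the same left ideal there and differ only by a unit of the form $f(x)\sgq^n$ with $f\in K(x)^\ast$ and $n\in\Z$. Such a left multiplication leaves the finite slopes of the Newton polygon invariant, and it does not alter the solution spaces in any $K$-algebra. The sign flip from $\{0,-1/s_1\}$ to the claimed $\{0,+1/s_1\}$ accounts for the standard convention that, under $q\leftrightarrow q^{-1}$, the minus sign attached to slopes at infinity in $NRP_{\dep}(\cL_p)$ becomes a plus sign in $NP_{\dep}(\cL)$: this is precisely the symmetry $\mathcal{S}:x\mapsto 1/z$ already exploited in the proof of Theorem \ref{thm:sing}.

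The main obstacle lies in this last step: one must verify rigorously that the Newton polygon slopes of the Corollary receive the claimed sign and that the basis of solutions in $K[[x-\xi]]_p$ persists for $\cL$ itself rather than only for $\cL_p$. The subtlety is that the polynomial rings $K[x,\dq]$ and $K[x,\dep]$ are not literally identical as subrings of $K(x)[\sgq,\sgq^{-1}]$, so the Newton polygon notion must be transported carefully along the relation $\dep=\sgp\circ\dq$, tracking the rescaling of slopes this induces.
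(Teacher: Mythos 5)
Your proposal is correct and follows the same route the paper takes: apply Proposition~\ref{prop:cambiordine} (its second, ``in particular'' clause, with $t_1=s_1$, $t_2=s_2$) to view $F$ as a global $p$-Gevrey series of negative orders $\bigl(-(s_1-s_2),-s_2\bigr)$, then apply Theorem~\ref{thm:sing} in the $p$-theory, and finally account for the sign change of the Newton polygon slopes under $q\leftrightarrow q^{-1}$. The paper's own proof is exactly this one-liner.

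Two small remarks. First, the case split invoking Theorem~\ref{thm:singbis} when $s_2=0$ is unnecessary: $(s_1-s_2,s_2)\in\cZ$ already holds under the stated hypotheses, so Theorem~\ref{thm:sing} applies uniformly. Second, and more substantively, you attribute the sign flip to the symmetry $\mathcal S:x\mapsto 1/z$; that is not the relevant mechanism here. The sign change comes from the fact that rewriting $\cL=\sum_i a_i(x)\sgq^i$ as a polynomial in $\sgp=\sgq^{-1}$ (equivalently, left-multiplying by $\sgq^{-\nu}$) reverses the $u$-coordinate $i\mapsto\nu-i$ in the Newton--Ramis polygon, hence reflects the polygon and negates the finite slopes. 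This is precisely what the paper means by ``when one changes $q$ in $q^{-1}$, the slopes of the Newton Polygon change sign,'' and it is the only point you leave as an ``obstacle'' -- it is in fact immediate from the definition of $NRP_{\sgq}$ together with the degree- and order-preservation of $\sgq^{-\nu}$, so no further work is needed beyond the identification of $\cL$ and $\cL_p$ up to a unit $f(x)\sgq^n$, which you already carry out correctly.
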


\begin{proof}
It follows by Proposition \ref{prop:cambiordine}, taking into account that
when one changes $q$ in $q^{-1}$, the slopes of the Newton Polygon change sign.
\end{proof}

Following \cite{andreannalsII} we can characterize the apparent singularities of
such a $q$-difference equation:

\begin{thm}\label{thm:teogev}
Let $F\in K[[x]]\smallsetminus K[x]$
be a global $q$-Gevrey series of orders $(-s_1,-s_2)$,
with $(s_1,s_2)\in\cZ$.
We fix a point $\xi\in K^\ast$.
For all $v\in\cP$ such that $|q|_v>1$
we suppose that the $v$-adic function $F(x)$ has a zero at $\xi$.
Let $\cL\in K\l[x,\dq\r]$ be the minimal linear $q$-difference operator
such that $\cL F=0$.
Then $\cL$ has a basis of solution in
$$
(x-\xi)K[[x-q\xi]]_q=\l\{\sum_{n=1}^\infty a_n(x-\xi)_n:
a_n\in K\r\}\,.$$
\end{thm}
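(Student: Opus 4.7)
The plan is to reduce Theorem~\ref{thm:teogev} to Theorem~\ref{thm:sing} via the factorization $F=(x-\xi)\tilde F$. First, define $\tilde F\in K[[x]]$ by the formal recursion coming from $F=(x-\xi)\tilde F$; this yields the explicit coefficients
\[
\tilde a_n=-\xi^{-(n+1)}\sum_{k=0}^{n}a_k\xi^k\in K,
\]
so $\tilde F$ makes sense as a formal series regardless of any convergence issue. Set $\cM:=\cL\circ(x-\xi)\in K(x)[\dq]$; this operator has the same $\dq$-degree $\mu:=\deg_{\dq}\cL$ as $\cL$. The map $\tilde y\mapsto(x-\xi)\tilde y$ is a $K$-linear bijection between the solutions of $\cM$ in $K[[x-q\xi]]_q$ and the solutions of $\cL$ in $(x-\xi)K[[x-q\xi]]_q$. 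The theorem is therefore equivalent to showing that $\cM$ has a basis of $\mu$ solutions in $K[[x-q\xi]]_q$.

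The crux of the argument is to verify that $\tilde F$ is itself a global $q$-Gevrey series of orders $(-s_1,-s_2)$. Its $q$-holonomy is immediate, since $\cM\tilde F=\cL\bigl((x-\xi)\tilde F\bigr)=\cL F=0$. For the finite-size condition on the twisted series associated to $\tilde F$, one proceeds place by place. At each $v\in\cP$ with $|q|_v>1$, the hypothesis that the $v$-adic function $F$ vanishes at $\xi$ means that $\sum_{k\ge 0}a_k\xi^k=0$ $v$-adically; hence $\sum_{k\le n}a_k\xi^k=-\sum_{k>n}a_k\xi^k$, and the tail bound for a $G_q$-function at $\xi$ controls $|\tilde a_n|_v$. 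At every other place $v$ (non-archimedean, or archimedean with $|q|_v\le 1$), the partial-sum formula for $\tilde a_n$ together with the fact that $|\xi|_v=1$ at almost all places reduces the estimate for $\tilde a_n$ to that for $a_0,\dots,a_n$. Combining these bounds with the $G_q$-function property of the twisted series $\sum_{n\ge 0}a_n\bigl(q^{n(n-1)/2}\bigr)^{s_1}\bigl([n]_q^!\bigr)^{s_2}x^n$ yields that the analogous twisted series for $\tilde F$ is again a $G_q$-function.

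With $\tilde F$ established as a global $q$-Gevrey series of the same orders, apply Theorem~\ref{thm:sing} to its minimal operator $\tilde\cL$: it has a basis of solutions in $K[[x-q\xi]]_q$. To conclude, identify $\tilde\cL$ with $\cM$ up to a nonzero rational factor. Both operators kill $\tilde F$, so $\tilde\cL$ divides $\cM$ in $K(x)[\dq]$, giving $\deg_{\dq}\tilde\cL\le\mu$. Conversely, if $\deg_{\dq}\tilde\cL<\mu$, then $\tilde\cL\circ(x-\xi)^{-1}\in K(x)[\dq]$ would be a nonzero operator of $\dq$-degree strictly less than $\mu$ annihilating $F$, contradicting the minimality of $\cL$. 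Hence $\deg_{\dq}\tilde\cL=\mu=\deg_{\dq}\cM$, so $\tilde\cL$ and $\cM$ are associates and share the same solution space in $K[[x-q\xi]]_q$, producing the desired basis of solutions of $\cL$ in $(x-\xi)K[[x-q\xi]]_q$.

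The main obstacle is the second paragraph: the proof that $\tilde F=F/(x-\xi)$ is a global $q$-Gevrey series of the same orders requires careful place-by-place bookkeeping, and the vanishing hypothesis at the archimedean places with $|q|_v>1$ must be invoked precisely where a direct bound on $\xi^{-(n+1)}\sum_{k\le n}a_k\xi^k$ would otherwise produce unacceptable growth. The switch between the partial-sum and tail expressions for $\tilde a_n$ according to the type of place is the essential technical ingredient.
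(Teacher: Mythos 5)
Your proposal follows the paper's argument: you factor $F=(x-\xi)\tilde F$, prove that $\tilde F=F/(x-\xi)$ is again a global $q$-Gevrey series of the same orders (this is the paper's Lemma~\ref{lemma:zero}), identify the minimal annihilator of $\tilde F$ with $\cL\circ(x-\xi)$, invoke Theorem~\ref{thm:sing}, and transport the basis of solutions back by multiplication by $(x-\xi)$. The only material difference is in how the size bound at places with $|q|_v>1$ is obtained: where you sketch a direct tail estimate on the coefficients $\tilde a_n=-\xi^{-(n+1)}\sum_{k\le n}a_k\xi^k$ (valid since all places are ultrametric), the paper disposes of these places by citing \cite[Prop.~2.1]{RamisToulouse}, comparing the growth at $\infty$ of the $v$-adic entire functions $F$ and $F/(x-\xi)$ rather than estimating coefficients; this is a variation in technique, not a different route.
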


The proof is based on the following
lemma, which is an analogue of \cite[Lemme 2.1.2]{andreannalsII}
(\cf also \cite[Lemma 4.4.2]{andreannalsII}).

\begin{lemma}\label{lemma:zero}
Let $F$ be a global $q$-Gevrey series of orders $(-s_1,-s_2)$,
with $s_1,s_2\in\Q_{\geq 0}\times\Z_{\geq 0}$.
We fix a point $\xi\in K^\ast$.
For all $v\in\cP$ such that $|q|_v>1$
we suppose that the $v$-adic entire function $F(x)$ has a zero at $\xi$.
Then $G=(x-\xi)^{-1}F$ is a global $q$-Gevrey series of orders
$(-s_1,-s_2)$.
\end{lemma}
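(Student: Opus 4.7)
The plan is to exhibit $G = (x-\xi)^{-1} F$ as an element of $K[[x]]$ satisfying a $q$-difference equation, then verify that its rescaled partner $\widetilde{G}(x) = \sum_{n\geq 0} \gamma_n x^n$, with $\gamma_n := c_n q^{n(n-1)s_1/2}[n]_q^{!s_2}$, is a $G_q$-function. Since $\xi \in K^\ast$, $x-\xi$ is a unit in $K[[x]]$, so $G = \sum c_n x^n \in K[[x]]$ is defined; and if $\cL \in K(x)[\sgq]$ annihilates $F$ then $\cL$ composed with multiplication by $x-\xi$, rewritten through $\sgq(x-\xi) = (qx-\xi)\sgq$, is again a $q$-difference operator and it annihilates $G$. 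From $(x-\xi)G = F$ one reads off $a_n = c_{n-1} - \xi c_n$, whence the closed form
\[
c_n \;=\; -\xi^{-(n+1)} \sum_{k=0}^n a_k \xi^k.
\]
At each $v \in \cP_\infty$ with $|q|_v > 1$ the Gevrey rescaling forces $F$ to be $v$-adically entire, so $F(\xi)=0$ reads $\sum_{k\geq 0} a_k \xi^k = 0$ in $K_v$, and the element $c_n$ then admits the equivalent tail representation $c_n = \xi^{-(n+1)}\sum_{k>n} a_k \xi^k$ there.

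I would then establish the finite-size bound place by place, using the partial-sum formula at $v \in \cP_f$ and at $v \in \cP_\infty$ with $|q|_v < 1$, and the tail formula at $v \in \cP_\infty$ with $|q|_v > 1$. Introducing the $G_q$-coefficients $\alpha_n = a_n q^{n(n-1)s_1/2}[n]_q^{!s_2}$ of $\widetilde F$ and the ratio $R(n,k) = q^{(n(n-1)-k(k-1))s_1/2}\bigl([n]_q^!/[k]_q^!\bigr)^{s_2}$, the two formulas become
\[
\gamma_n = -\xi^{-(n+1)}\sum_{k\leq n}\alpha_k \xi^k R(n,k)
\quad\text{and}\quad
\gamma_n = \xi^{-(n+1)}\sum_{k>n}\alpha_k \xi^k R(n,k),
\]
the second one valid only at $v \in \cP_\infty$ with $|q|_v > 1$. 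For $v \in \cP_f$ we have $|q|_v = 1$, hence $|R(n,k)|_v \leq 1$ for $k \leq n$, and the ultrametric inequality gives $|\gamma_n|_v \leq \sup_{k\leq n}|\xi|_v^{k-n-1}|\alpha_k|_v$. The same bound holds at $v \in \cP_\infty$ with $|q|_v < 1$, since there $|[n]_q^!|_v = 1$ and $|q|_v^{(n-k)(n+k-1)s_1/2} \leq 1$. Because $|\xi|_v = 1$ for all but finitely many $v$, the $\xi$-dependent correction contributes only an $O(n)$ term across $\cP$. At $v \in \cP_\infty$ with $|q|_v > 1$, $|R(n,k)|_v$ decays like $|q|_v^{-(k-n)(k+n-1)(s_1+s_2)/2}$ for $k > n$, and $(s_1, s_2) \neq (0,0)$ makes this decay dominate the geometric growth of $|\alpha_k|_v$ coming from the positive $v$-adic radius of convergence of $\widetilde F$. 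Summing the place-wise bounds yields $\sum_v \sup_{s\leq n} \lgp|\gamma_s|_v \leq \sum_v \sup_{s\leq n} \lgp|\alpha_s|_v + O(n) = O(n)$, which is the required finite-size property for $\widetilde G$.

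The main obstacle is the uniform control in $v$ of the estimate at the places $v \in \cP_\infty$ with $|q|_v > 1$: one has to verify that the decay provided by $R(n,k)$ dominates the potential growth of $|\alpha_k|_v$ in a way summable across this infinite set of places and compatible with the finite-size bound inherited from $\widetilde F$. It is precisely at these places that the vanishing hypothesis $F(\xi) = 0$ is indispensable---the partial-sum representation would there produce unbounded contributions from the rescaling factor, whereas the tail representation, legitimate only under this hypothesis, exploits the Gevrey decay of $R(n,k)$ to counterbalance the growth of $|\alpha_k|_v$.
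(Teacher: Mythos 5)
Your proposal follows the same overall plan as the paper's own argument: write out the rescaled coefficients of $G$ in terms of those of $F$ via the partial-sum formula $\gamma_n = -\xi^{-(n+1)}\sum_{k\leq n}\alpha_k\xi^k R(n,k)$ (the paper writes exactly this as its recursion for $b_n$), observe that $|R(n,k)|_v\leq 1$ at every place with $|q|_v\leq 1$ so the ultrametric inequality yields the size bound there up to the finitely supported $\xi$-correction, and then treat the places with $|q|_v>1$ separately. The only place where you diverge is in the treatment of those last places: the paper simply cites \cite[Prop.\ 2.1]{RamisToulouse} --- since $F$ is $v$-adically entire of $q$-Gevrey order $s_1+s_2$ and vanishes at $\xi$, the quotient $G=F/(x-\xi)$ is again entire of the same growth at $\infty$, hence of the same local $q$-Gevrey order --- whereas you propose to re-derive this by hand from the tail representation $\gamma_n=\xi^{-(n+1)}\sum_{k>n}\alpha_k\xi^kR(n,k)$. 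That is a legitimate, more self-contained route, and the super-geometric decay $|R(n,k)|_v\sim|q|_v^{-(k^2-n^2)(s_1+s_2)/2}$ does indeed dominate any geometric growth $|\alpha_k|_v\leq C_v^k$; but you do not actually carry the computation to a bound $\log^+|\gamma_n|_v=O(n)$, and you flag it as the ``main obstacle'' without resolving it, so as written this step is a sketch rather than a proof.

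One factual slip: you describe the places $v\in\cP_\infty$ with $|q|_v>1$ as ``this infinite set of places'' and worry about summability over it. Since $K$ is a \emph{finite} extension of $k(q)$ and $\cP_\infty$ consists precisely of the finitely many places above $|\;|_q$ and $|\;|_{q^{-1}}$, this set is finite; there is no summability-in-$v$ issue there, only the per-place growth estimate $\sup_{s\leq n}\log^+|\gamma_s|_v=O(n)$, which is exactly what the Ramis reference (or a completed version of your tail computation) supplies. Once that is fixed and the archimedean-type estimate is carried through, your argument is a correct and essentially equivalent (if slightly more explicit) version of the paper's.
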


\begin{proof}[Proof of Theorem \ref{thm:teogev}]
We fix some notation:
$$
F=\sum_{n=0}^\infty{a_n\over q^{s_1{n(n-1)\over 2}}{[n]_q^!}^{s_2}}x^n\,,\
G=\sum_{n=0}^\infty{b_n\over q^{s_1{n(n-1)\over 2}}{[n]_q^!}^{s_2}}x^n\,,
$$
$$
\widetilde h(n,v,F)=\sup_{s\leq n}|a_s|_v
\hbox{\ and }
\widetilde h(n,v,G)=\sup_{s\leq n}|b_s|_v\,.
$$
Since $\frac{1}{x-\xi}=-\sum_{n\geq 0}\frac{x^n}{\xi^{n+1}}$, we obtain:
$$
b_n=-\sum_{k=0}^n\l(q^{{n(n-1)\over 2}-{k(k-1)\over 2}}\r)^{s_1}
\l({[n]_q^!\over [k]_q^!}\r)^{s_2}\xi^{k-n-1}a_k
$$
and therefore:
$$
\limsup_{n\rightarrow\infty}{1\over n}\sum_{|q|_v\leq 1}\widetilde h(n,v,G)
\leq \limsup_{n\rightarrow\infty}{1\over n}\sum_{|q|_v\leq 1}\widetilde h(n,v,F)
+\sum_{|q|_v\leq 1}|\xi|_v\,.
$$
To conclude it is enough to prove that $G$ is a local $q$-Gevrey series
of order $s_1+s_2$ for all
$v\in\cP$ such that $|q|_v>1$.
This follows from \cite[Prop. 2.1]{RamisToulouse}, since $F$ and $G$ have the same growth at
$\infty$, because $F$ has a zero at $\xi$.
\end{proof}

\begin{proof}
Let $G=(x-\xi)^{-1}F$ and $\cL$ be the minimal linear $q$-difference operator such that
$\cL F=0$;
then $\cL\circ(x-\xi)$ is the minimal linear
$q$-difference operator  such that $\cL\circ(x-\xi)(G)=0$. By
Lemma \ref{lemma:zero} and Theorem \ref{thm:sing},
$\cL\circ(x-\xi)$ has a basis of solution in $K[[x-q\xi]]_q$,
therefore the operator
$\cL$ has a basis of solution in $(x-\xi) K[[x-q\xi]]_q$.
\end{proof}

Once again, switching $q$ into $q^{-1}$ we obtain the corollary:

\begin{cor}\label{cor:teogevbis}
Let $F\in K[[x]]\smallsetminus K[x]$
be a global $q$-Gevrey series of orders $(s_1,-s_2)$,
with $s_1,s_2\in\Q\times\Z$, $s_1\geq s_2\geq 0$ and either $s_1\neq s_2$ or $s_2\neq 0$.
We fix a point $\xi\in K^\ast$.
For all $v\in\cP$ such that $|q|_v<1$
we suppose that the $v$-adic function $F(x)$ has a zero at $\xi$.
Let $\cL\in K\l[x,\dq\r]$ be the minimal linear $q$-difference operator
such that $\cL F=0$. Then $\cL$ has a basis of solution in
$$
(x-\xi)K[[x-p\xi]]_p\,.$$
\end{cor}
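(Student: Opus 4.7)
The plan is to deduce this corollary directly from Theorem \ref{thm:teogev} via the substitution $q\mapsto p=q^{-1}$. First, I apply Proposition \ref{prop:cambiordine}: since $F$ is a global $q$-Gevrey series of orders $(s_1,-s_2)$ with $s_1\geq s_2\geq 0$, it is simultaneously a global $p$-Gevrey series of orders $(-(s_1-s_2),-s_2)$. The hypothesis ``$s_1\neq s_2$ or $s_2\neq 0$'' is precisely what guarantees $(s_1-s_2,s_2)\in\cZ$, so Theorem \ref{thm:teogev} is applicable to $F$ in the $p$-setting.

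Next I match the hypotheses on $\xi$: a place $v\in\cP$ satisfies $|q|_v<1$ if and only if $|p|_v>1$, so the assumption that $F$ vanishes at $\xi$ $v$-adically for every $v$ with $|q|_v<1$ is exactly the hypothesis of Theorem \ref{thm:teogev} applied with $p$ in place of $q$. Letting $\cL'\in K[x,\dep]$ denote the minimal $p$-difference operator annihilating $F$, Theorem \ref{thm:teogev} provides a basis of solutions of $\cL'$ lying in $(x-\xi)K[[x-p\xi]]_p$.

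The final step is to compare $\cL$ and $\cL'$. The relation $\sgp=\sgq^{-1}$ yields $K(x)[\sgq]=K(x)[\sgp]$ and hence $K(x)[\dq]=K(x)[\dep]$: these are the same skew polynomial ring, expressed through two different systems of generators. The left annihilator of $F$ in this common ring is a principal left ideal whose generator, made primitive in $K[x]$, is $\cL$ when written in powers of $\dq$ and $\cL'$ when written in powers of $\dep$. Consequently $\cL$ and $\cL'$ share the same solution space in any $K(x)$-algebra stable under $\sgq$ (equivalently under $\sgp$), in particular in the Pochhammer algebra $K[[x-p\xi]]_p$. Transferring the basis of $\cL'$-solutions obtained from Theorem \ref{thm:teogev} to $\cL$ yields the claim.

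The only point requiring care is this last identification, namely that the cyclic $K(x)[\sgq]$-module generated by $F$ coincides with the cyclic $K(x)[\sgp]$-module it generates, so that the two ``minimal operator'' notions indeed agree up to $K(x)^\times$-scalars. This reduces to the observation that for the minimal operator $\cL=\sum_{i=0}^\nu a_i(x)\sgq^i\in K[x,\sgq]$ one necessarily has $a_0\neq 0\neq a_\nu$ (otherwise one could factor out $\sgq$ or $\sgq^{-1}$ and contradict minimality), so that $\sgq^{\pm n}F$ can be written, for every $n\in\N$, as a $K(x)$-linear combination of $F,\sgq F,\dots,\sgq^{\nu-1}F$. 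Granting this essentially formal observation, the corollary is an immediate translation of Theorem \ref{thm:teogev}.
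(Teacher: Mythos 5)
Your argument takes the same route the paper uses (the paper's own proof is simply the one line ``It follows from Proposition \ref{prop:cambiordine} and Theorem \ref{thm:teogev}''), and the verifications that the orders land in $\cZ$ and that the hypothesis on $\xi$ translates correctly under $q\mapsto p$ are both right.

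The one inaccuracy is the assertion that $K(x)[\sgq]=K(x)[\sgp]$ (equivalently $K(x)[\dq]=K(x)[\dep]$). These are \emph{different} subrings of the skew-Laurent ring $K(x)[\sgq,\sgq^{-1}]$ --- the first contains $\sgq$ but not $\sgq^{-1}=\sgp$, and the second the other way round --- so the two minimal operators do not agree ``up to $K(x)^\times$-scalars'', as you write. The correct relation is the following: writing $\cL=\sum_{i=0}^\nu a_i(x)\sgq^i$ with $a_0\neq 0\neq a_\nu$ (which you correctly justify), the operator $\sgp^\nu\circ\cL=\sum_{i} a_i(p^\nu x)\sgp^{\nu-i}$ lies in $K[x,\sgp]$, has $\sgp$-degree $\nu$, and kills $F$; since you also show the $K(x)$-span of $\{\sgp^j F\}_{j\geq 0}$ is $\nu$-dimensional, the minimal $p$-difference operator is $\cL'=f(x)\,\sgp^\nu\circ\cL$ for some $f(x)\in K(x)^\times$. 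Thus $\cL$ and $\cL'$ differ by a unit $f(x)\sgp^\nu$ of the \emph{Laurent} ring, not by a rational scalar. This correction is harmless for the conclusion: in any $K(x)$-algebra on which $\sgq$ acts bijectively, $\cL'y=0\iff\sgp^\nu(\cL y)=0\iff\cL y=0$, so the two operators have the same solution space and the conclusion of Theorem \ref{thm:teogev} (applied with $p$ in place of $q$) transfers to $\cL$. Your final paragraph already contains the needed ingredients; only the misstated ring identity and the ``$K(x)^\times$-scalars'' claim should be repaired.
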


\begin{proof}
It follows from Proposition \ref{prop:cambiordine} and Theorem \ref{thm:teogev}.
\end{proof}

We conclude the section with an example:

\begin{exa}
Let us consider the $q$-exponential series $E_q(x)=\sum_{n\geq 0}{x^n\over [n]_q^!}$, solution of
the equation $\dq y=y$. A classical formula (\cf \cite[1.3.16]{GR}) says that for
$|q|_v>1$ the series $E_q(x)$ can be written as an infinite product:
$$
E_q(x)=\l(-x(1-q^{-1});q^{-1}\r)_\infty:=\prod_{k=0}^\infty\l(1-x{1-q\over q^{k+1}}\r)\,,
$$
hence $E_q({q\over 1-q})=0$ for all $v$ such that $|q|_v>1$.
Let us consider formal q-series:
$$
G(x)={E_q(x)\over x-{q\over 1-q}}={q-1\over q}E_q\l({x\over q}\r)\,.
$$
Obviously, $q\dq G(x)-G(x)=0$ and actually:
$$
\l(\dq-1\r)\circ\l(x-{q\over 1-q}\r)G(x)=\l(x-{1\over 1-q}\r)\l(q\dq-1\r)G(x)=0\,.
$$
Since $\sum_{n\geq 0}{q^{-n}\over [n]_q^!}
T^q_n\l(x,{q^2\over 1-q}\r)\in K[[x-{q^2\over 1-q}]]_q$
is a formal solution of $q\dq y=y$, the series
$$
\l(x-{q\over 1-q}\r)\sum_{n\geq 0}{q^{-n}\over [n]_q^!}T^q_n\l(x,{q^2\over 1-q}\r)
\in \l(x-{q\over 1-q}\r)K\l[\l[x-{q^2\over 1-q}\r]\r]_q
$$
is a formal solution of $\dq y=y$.
\end{exa}

\section{An irrationality result for global $q$-Gevrey series of negative orders}

In this section we are going to give a simple criteria to
determine the $q$-orbits where a global
$q$-Gevrey series does {\it not} satisfy the hypothesis of Theorem \ref{thm:teogev}.
We will deduce an irrationality result for values of a global $q$-Gevrey series
$F(x)\in K[[x]]\smallsetminus K[x]$ of negative orders.

\begin{rmk}
The arithmetic Gevrey series theory in the differential case
has applications to transcendence theory (\cf \cite{andreannalsII}).
In the global $q$-Gevrey series framework this can not be true, since the set
of global $q$-Gevrey series has only a structure of $\ol k(q)$-vector
space. We mean that the product of two global $q$-Gevrey series of nonzero orders doesn't need to be a global
$q$-Gevrey series, as the following example shows:
$$
e_q(x)^2=\l(\sum_{n=0}^\infty{x^n\over[n]_q^!}\r)^2=
\sum_{n=0}^\infty\l(\sum_{k=0}^n{n\choose k}_q\r){x^n\over[n]_q^!}\,.
$$
In fact, because of the estimate at the cyclotomic places $e_q(x)^2$ should be a
global $q$-Gevrey series of order $(0,-1)$, while the local $q$-Gevrey order at places $v\in\cP_\infty$
such that $|q|_v>1$  is $2$.
For this reason a global $q$-Gevrey series theory can only
have applications to the irrationality theory.
\end{rmk}

Let
$$
\cL=a_\nu(x)\sgq^\nu+\dots+a_1(x)\sgq+a_0(x)\in K\l[x,\sgq\r]\,,
$$
and let $u_0,\dots,u_{\nu-1}$ a basis of solution of $\cL$ is a convenient
$q$-difference algebra extending $K(x)$.
The \emph{Casorati matrix}
$$
\cU=\begin{pmatrix}
u_0&\cdots &u_{\nu-1}\\
\sgq u_0&\cdots &\sgq u_{\nu-1}\\
\vdots&\ddots&\vdots\\
\sgq^{\nu-1}u_0&\cdots &\sgq^{\nu-1}u_{\nu-1}
\end{pmatrix}\,,
$$
is a fundamental solution of the $q$-difference system
$$
\sgq\cU=
\begin{pmatrix}
\begin{matrix}0\\ \vdots\\ 0\end{matrix}&\vrule &{\mathbb I}_{\nu-1}\\
    \hrulefill&\hrulefill&\hskip -30pt\hrulefill\\
    -\frac{a_0(x)}{a_\nu(x)}&\vrule
    &\begin{matrix}-\frac{a_1(x)}{a_\nu(x)}&\dots&-\frac{a_{\nu-1}(x)}{a_\nu(x)}\\
\end{matrix}\end{pmatrix}\cU\,,
$$
so that $\cC=\det \cU$ is solution of the equation:
$$
\sgq \cC=(-1)^\nu{a_0(x)\over a_\nu(x)}\cC\,.
$$
Notice that the ``$q$-Wronskian lemma''
(\cf for instance \cite[\S 1.2]{DVInv})
implies that the determinant of the Casorati matrix of a basis of solutions of an operator $\cL$
is nonzero.

\begin{prop}
Let $F\in K[[x]]\smallsetminus K[x]$
be a global $q$-Gevrey series of orders $(-s_1,-s_2)$,
with $s_1,s_2\in\cZ$.
We fix a point $\xi\in K^\ast$.
Let $\cL=a_\nu(x)\sgq^\nu+\dots+a_1(x)\sgq+a_0(x)\in K\l[x,\sgq\r]$
be the minimal $q$-difference operator such that $\cL F=0$.
If $F(x)$ has a zero at $\xi$ for all $v$ such that $|q|_v>1$,
then there exists an integer $m\geq 0$ such that $q^m\xi$ is a zero of $a_0(x)$.
\end{prop}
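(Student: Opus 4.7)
The plan is to combine Theorem~\ref{thm:teogev} with a Casorati-determinant argument.  Because $K[x,\sgq]=K[x,\dq]$ and the minimal operators in the two descriptions differ only by multiplication by a polynomial in $K[x]$, the hypotheses place us in the setting of Theorem~\ref{thm:teogev}, so $\cL$ admits a basis of formal solutions $u_0,\dots,u_{\nu-1}$ lying in $(x-\xi)K[[x-q\xi]]_q\subset K[[x-\xi]]_q$, every one of which vanishes at $\xi$.

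I then form the Casorati determinant $\cC:=\det(\sgq^i u_j)_{0\le i,j\le\nu-1}$.  Writing the $q$-difference system of $\cL$ in companion form $\sgq\vec y=A\vec y$ and computing $\det A=(-1)^{\nu}a_0(x)/a_\nu(x)$ yields the first-order relation
\[
a_\nu(x)\,\cC(qx)\;=\;(-1)^{\nu}\,a_0(x)\,\cC(x),
\]
while the $q$-Wronskian lemma guarantees that $\cC$ is nonzero in the formal ring.  Since each $u_j(\xi)=0$, the first row of the Casorati matrix at $x=\xi$ is zero, so $\cC(\xi)=0$; the canonical bijection $K[[x-\xi]]_q\longrightarrow K^{\N}$, $f\mapsto(f(q^m\xi))_{m\ge0}$ (which is an isomorphism because $\xi\in K^{\ast}$ and $q$ is not a root of unity) then tells us that $\cC(q^{N}\xi)\neq0$ for some smallest integer $N\ge1$.

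Evaluating the functional equation at $x=q^{N-1}\xi$ gives
\[
a_\nu(q^{N-1}\xi)\,\cC(q^{N}\xi)\;=\;(-1)^\nu a_0(q^{N-1}\xi)\,\cC(q^{N-1}\xi)\;=\;0,
\]
which (since $\cC(q^N\xi)\neq0$) forces a zero of one of the extremal coefficients of $\cL$ on the forward orbit of $\xi$.  Read naively this identifies a zero of $a_\nu$; to pin the conclusion on $a_0$ as stated, the plan is to iterate the construction using Lemma~\ref{lemma:zero}: $G=(x-\xi)^{-1}F$ is again a global $q$-Gevrey series of the same orders, and its minimal operator $\cL_G$ is related to $\cL$ by the identity $\cL\circ(x-\xi)=h(x)\cL_G$ in $K(x)[\sgq]$ for some $h(x)\in K(x)$.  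Tracking how $h(x)$ absorbs $(x-\xi)$ and the zero produced by the Casorati argument should allow one to trade the Casorati zero of $a_\nu$ of $\cL_G$ for a zero of $a_0$ of $\cL$ in the orbit $\{q^m\xi:m\ge0\}$.

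The main obstacle, and what I expect to require the most care, is precisely this translation step: one must describe exactly which shifts of $\xi$ can appear as roots of $h(x)$, verify that the propagation of the vanishing of $\cC$ survives the (finitely many) zeros of $a_\nu$ that may lie in the orbit, and use the minimality of $\cL$ to exclude the degenerate scenarios.  The remainder of the argument is a routine application of the $q$-Casorati formalism, once Theorem~\ref{thm:teogev} and the sequence-space model of $K[[x-\xi]]_q$ are in hand.
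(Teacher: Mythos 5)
Your Casorati computation is correct and is exactly the paper's argument, and you have put your finger on a real discrepancy.  The companion matrix $A$ for $\cL=a_\nu\sgq^\nu+\dots+a_0$ has $\det A=(-1)^\nu a_0(x)/a_\nu(x)$, which gives the functional equation
\[
a_\nu(x)\,\cC(qx)=(-1)^\nu a_0(x)\,\cC(x),
\]
as you wrote and as the paper itself derives in the display immediately preceding the proposition.  The $q$-Wronskian lemma and the sequence-space description of $K[[x-\xi]]_q$ produce a smallest $N\ge1$ with $\cC(q^N\xi)\neq0$, and evaluating at $x=q^{N-1}\xi$ forces $a_\nu(q^{N-1}\xi)=0$.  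This is a zero of $a_\nu$, not of $a_0$.  The paper's proof reaches $a_0$ only because it \emph{inverts} the functional equation inside the proof, writing $\cC(qx)=(-1)^\nu\frac{a_\nu(x)}{a_0(x)}\cC(x)$ instead of $(-1)^\nu\frac{a_0(x)}{a_\nu(x)}$; with that inverted ratio the same $q$-order count on $\cC$ and $\sgq\cC$ yields a zero of $a_0$.  Since the inverted ratio contradicts the determinant formula the paper states three lines earlier, the statement of the proposition almost certainly contains a typo ($a_0$ and $a_\nu$ should be swapped, and correspondingly the subsequent corollary obtained by $q\mapsto q^{-1}$ should refer to $a_0$ rather than $a_\nu$).

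Given this, the last third of your proposal, where you try to ``trade'' the zero of $a_\nu$ for a zero of $a_0$ by passing to $G=(x-\xi)^{-1}F$ and its minimal operator $\cL_G$, is a genuine gap: you do not carry out the bookkeeping, and it is not clear that the trade is possible.  To run a Casorati argument on $\cL_G$ you would again need a known vanishing point for a basis of solutions of $\cL_G$, which Lemma~\ref{lemma:zero} does not supply, and the factor $h(x)$ relating $\cL\circ(x-\xi)$ to $\cL_G$ can absorb the zero you are trying to exhibit.  But you should not need this detour at all.  Your direct computation already matches the paper's proof once the typo in the ratio is corrected, and I would simply present the argument as you did through the evaluation at $q^{N-1}\xi$, note explicitly that it yields a zero of the \emph{leading} coefficient on the forward orbit $\{q^m\xi:m\ge0\}$, and flag that the statement as printed appears to have swapped $a_0$ and $a_\nu$.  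One minor point to make explicit: the $q$-Wronskian lemma gives $\cC\neq0$ as an element of the $q$-difference ring $K[[x-\xi]]_q$ (where $u_0,\dots,u_{\nu-1}$ live by Theorem~\ref{thm:teogev}), and nonvanishing there is equivalent, via the identification $K[[x-\xi]]_q\cong K^{\N}$, $f\mapsto(f(q^m\xi))_{m\ge0}$, to the existence of some $m$ with $\cC(q^m\xi)\neq0$; you use this implicitly and it is worth stating.
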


\begin{proof}
The determinant of the Casorati matrix of a basis of solutions of $\cL$
satisfies the equation
$$
y(qx)=(-1)^\nu\frac{a_\nu(x)}{a_0(x)}y(x)\,.
$$
On the other hand we know that $\cL$ has a basis of solution
$u_0,\dots,u_{\nu-1}\in (x-\xi)K[[x-q\xi]]$.
This means that the $u_i$'s are formal series of the form
$\sum_{n\geq 1}a_nT_n^q(x,\xi)$, for some $a_n\in K$.
Since $(qx-\xi)=q(x-q^{n-1}\xi)+(q^n-1)\xi$, one obtain that
$$
\sgq\l(\sum_{n\geq 1}a_nT_n^q(x,\xi)\r)
=qa_1+\sum_{n\geq 1}\l(q^n a_n+q^{n+1}a_{n+1}\xi(q^n-1))\r)T_n^q(x,\xi)\,.
$$
This implies that the determinant $\cC$ of the Casorati matrix of
$u_0,\dots,u_{\nu-1}$ is an element of $(x-\xi)K[[x-q\xi]]_q$.
Let $m\geq 1$ be the larger integer such that $\cC\in T_m^q(x,\xi)K[[x-q^m\xi]]_q$.
The formula above implies that
$\sgq\cU\in T_{m-1}^q(x,\xi)K[[x-q^{m-1}\xi]]_q\smallsetminus
T_m^q(x,\xi)K[[x-q^m\xi]]_q$, and therefore
that $q^{m-1}\xi$ is a zero of $a_0(x)$.
\end{proof}

In the same way we can prove the following result:

\begin{cor}
Let $F\in K[[x]]\smallsetminus K[x]$
be a global $q$-Gevrey series of orders $(s_1,-s_2)$,
with $s_1,s_2\in\Q\times\Z$, $s_1\geq s_2\geq 0$ and either $s_1\neq s_2$ or $s_2\neq 0$.
We fix a point $\xi\in K^\ast$.
Let $\cL=a_\nu(x)\sgq^\nu+\dots+a_1(x)\sgq+a_0(x)\in K\l[x,\sgq\r]$
be the minimal linear $q$-difference operator  such that $\cL F=0$.
If $F(x)$ has a zero at $\xi$
for all $v\in\cP$ such that $|q|_v<1$ then there exists an integer $m\leq -\nu$ such that
$q^m\xi$ is a zero of $a_\nu(x)$.
\end{cor}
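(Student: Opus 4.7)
The plan is to deduce this from the preceding Proposition by swapping the roles of $q$ and $p := q^{-1}$.

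First, Proposition \ref{prop:cambiordine} promotes $F$---a global $q$-Gevrey series of orders $(s_1,-s_2)$ with $s_1 \geq s_2 \geq 0$---to a global $p$-Gevrey series of negative orders $(-(s_1-s_2),-s_2)$. The hypothesis that either $s_1 \neq s_2$ or $s_2 \neq 0$ is exactly what forces $(s_1-s_2,\,s_2) \in \cZ$, matching the setup of the previous Proposition with $p$ in place of $q$. Likewise, the assumption that $F$ vanishes at $\xi$ for all $v$ with $|q|_v < 1$ reads off as the corresponding vanishing assumption for all $v$ with $|p|_v > 1$.

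Next, I would identify the minimal $p$-difference operator killing $F$ in terms of $\cL$. Using $\sgq = \sgp^{-1}$ and multiplying $\cL$ on the left by $\sgp^\nu$ yields
$$
\sgp^\nu \cL \;=\; \sum_{j=0}^\nu a_{\nu-j}(p^\nu x)\,\sgp^j,
$$
a $p$-difference operator of degree $\nu$ annihilating $F$ whose coefficients stay coprime in $K[x]$ (since $x \mapsto p^\nu x$ is an automorphism of $K[x]$). A $K(x)$-dimension count of the shift-orbit of $F$---analogous to the one in the Remark opening \S\ref{sec:gev}---shows that the minimal $p$-difference operator killing $F$ also has degree $\nu$, and coprimality together with equal degree force it to coincide with $\sgp^\nu\cL$ up to a nonzero scalar in $K$. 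Its constant term is therefore $a_\nu(p^\nu x)$, up to such a scalar.

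Applying the previous Proposition to $F$ viewed as a $p$-Gevrey series then produces an integer $m' \geq 0$ such that $p^{m'}\xi$ is a zero of $a_\nu(p^\nu x)$, i.e., $q^{-(m'+\nu)}\xi$ is a zero of $a_\nu(x)$. Setting $m := -(m'+\nu) \leq -\nu$ concludes the argument. The only non-routine ingredient is the minimality claim for $\sgp^\nu\cL$, which---while a minor bookkeeping step---is essential for identifying precisely which coefficient of $\cL$ the previous Proposition's conclusion pins down.
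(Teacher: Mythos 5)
Your proof is correct and follows essentially the same route as the paper's (very terse) proof, which simply records that the minimal $q^{-1}$-difference operator killing $F$ is $a_\nu(q^{-\nu}x)+\dots+a_1(q^{-\nu}x)\sg_{q^{-1}}^{\nu-1}+a_0(q^{-\nu}x)\sg_{q^{-1}}^\nu$ and applies Proposition~\ref{prop:cambiordine}. You fill in the details the paper leaves implicit: that $(s_1-s_2,s_2)\in\cZ$ under the stated hypotheses, that $\sgp^\nu\cL$ is the minimal $p$-difference operator (via the orbit-dimension count, coprimality being preserved by $x\mapsto p^\nu x$), and the final bookkeeping giving $m\leq-\nu$. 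One small point worth making explicit is that $a_0\neq 0$, which is what guarantees $\sgp^\nu\cL$ actually has $\sgp$-degree $\nu$; this follows from minimality of $\cL$, since $a_0=0$ would allow a substitution $x\mapsto q^{-1}x$ to drop the degree.
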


\begin{proof}
It follows from
Proposition \ref{prop:cambiordine} that $F(x)$ is a global $q^{-1}$-Gevrey series
of negative orders $(-(s_1-s_2),-s_2)$ and the
minimal linear $q^{-1}$-difference operator killing $F(x)$ is
$a_\nu(q^{-\nu}x)+\dots+a_1(q^{-\nu}x)\sg_{q^{-1}}^{\nu-1}+a_0(q^{-\nu}x)\sg_{q^{-1}}^\nu$.
\end{proof}

\begin{exa}
Let us consider the field $K=k(q)$ and the Tchakaloff series:
$$
T_q(x)=\sum_{n\geq 0}{x^n\over q^{n(n-1)/2}}\,.
$$
Together with $E_q(x)$, $T_q(x)$ is a $q$-analogue of the exponential function.
The minimal linear $q$-difference equation killing $T_q(x)$
is
$$
\cL=(\sgq-1)\circ(\sgq-qx)
=(\sgq-q^2x)\circ(\sgq-1)
=\sgq^2-(1+q^2x)\sgq+q^2x\,.
$$
Notice that $1,T_q(x)$ is a basis of solutions of $\cL$ at zero.
We conclude that
$T_q(\xi)\neq 0$ for all $\xi\in K^\ast$, as the value a $q^{-1}$-adic entire analytic function,
{\it i.e.} the hypothesis of Theorem \ref{thm:teogev} are never satisfied.
\par
In particular, let $K= k(\wtilde q)$, where $\wtilde q^r=q$ for some positive integer $r$.
For any $\xi\in k(\wtilde q)$, $\xi\neq 0$, the $\wtilde q^{-1}$-adic
value $T_q(\xi)$ of $T_q(x)$ at $\xi$ can be formally written as a Laurent series in
$k((\wtilde q^{-1}))$, which is the completion of $k(\wtilde q)$ at the $\wtilde q^{-1}$-adic place.
The theorem above says that $T_q(\xi)$ cannot be the expansion
of a rational function in $k(\wtilde q)$.
In fact, if it was,
there would exists $c\in k(\wtilde  q)$ such that
$T_q(x)+c$ has a zero at $\xi$ and is solution of $\cL$.
This would imply that $\cL$ has a basis of solutions having a zero at $\xi$,
against the fact that the constants are solution of $\cL$.
\end{exa}

As in \cite{andreannalsII},
we can also deduce a Lindemann-Weierstrass type statement:

\begin{cor}
Let $K=k(\wtilde q)$, where $\wtilde q$ is a root of $q$.
We consider the $q$-exponential function $e_q(x)=\sum_{n\geq 0}\frac{x^n}{[n]_q^!}$
and  a set of element $a_1,\dots,a_r\in K$, which are
multiplicatively independent modulo $q^\Z$ (\ie $\a_1^\Z\cdots\a_r^\Z\cap q^\Z=\{1\}$).
Then the Laurent series $e_q(a_1\xi),\dots,e_q(a_r\xi)\in k((\wtilde q^{-1}))$
are linearly independent over $k(\wtilde q)$ for any $\xi \in K^\ast$.
\end{cor}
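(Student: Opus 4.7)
\smallskip
The plan is to argue by contradiction. Suppose there is a nontrivial relation
$$
\sum_{i=1}^r c_i\,e_q(a_i\xi)=0
$$
in $k((\wtilde q^{-1}))$, with $c_i\in K=k(\wtilde q)$; after dropping vanishing terms I may assume $c_i\ne 0$ for every $i$, and consider the formal power series
$$
F(x)=\sum_{i=1}^r c_i\,e_q(a_ix)\in K[[x]].
$$

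\smallskip
First, $F$ is a global $q$-Gevrey series of orders $(0,-1)$: the series obtained by multiplying the $n$-th coefficient of $F$ by $[n]_q^!$ is
$$
\sum_{n\geq 0}\l(\sum_{i=1}^r c_ia_i^n\r)x^n=\sum_{i=1}^r\frac{c_i}{1-a_ix}\in K(x),
$$
which is rational, \emph{a fortiori} a $G_q$-function. The multiplicative-independence hypothesis places the $a_i$'s in pairwise distinct $q^\Z$-orbits, and a standard argument using the functional equation $e_q(q^na_jx)=\prod_{k=0}^{n-1}(1+(q-1)q^ka_jx)\,e_q(a_jx)$ shows that the $e_q(a_ix)$ are then $K(x)$-linearly independent. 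Consequently the minimal operator $\cL\in K[x,\dq]$ killing $F$ has order exactly $r$ and, in its $\sgq$-presentation, is a nonzero scalar multiple of
$$
\prod_{i=1}^r\bigl(\sgq-(1+(q-1)a_ix)\bigr);
$$
in particular $F\notin K[x]$, and the $e_q(a_ix)$ form a $K$-basis of the solution space of $\cL$.

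\smallskip
Since $K=k(\wtilde q)$, the unique place $v\in\cP$ for which $|q|_v>1$ is the $\wtilde q^{-1}$-adic one, and at this place $F(\xi)=0$ by hypothesis. I would then apply Theorem \ref{thm:teogev} with $(s_1,s_2)=(0,1)\in\cZ$: $\cL$ must then admit a basis of solutions lying inside the subspace $(x-\xi)K[[x-q\xi]]_q$ of $K[[x-\xi]]_q$. Each $e_q(a_ix)$ is $\wtilde q^{-1}$-adically entire, so it defines an element of $K[[x-\xi]]_q$ whose constant term in the basis $\{T^q_n(x,\xi)\}_{n\geq 0}$ equals $e_q(a_i\xi)$. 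Since the $r$ elements $e_q(a_ix)$ are $K$-linearly independent solutions, they span the $r$-dimensional solution space of $\cL$ in $K[[x-\xi]]_q$, and the inclusion of this whole space in $(x-\xi)K[[x-q\xi]]_q$ forces each of these constant terms to vanish:
$$
e_q(a_i\xi)=0,\qquad i=1,\dots,r.
$$

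\smallskip
Finally I would invoke the product expansion recalled in the example preceding the statement, $e_q(x)=\prod_{k\geq 0}\bigl(1-x(1-q)/q^{k+1}\bigr)$, valid $\wtilde q^{-1}$-adically, whose zero set is exactly $\{q^m/(1-q):m\geq 1\}$. Thus there exist integers $m_i\geq 1$ with $a_i\xi=q^{m_i}/(1-q)$ for every $i$; for $r\geq 2$ any pair of indices $i\neq j$ then yields $a_ia_j^{-1}=q^{m_i-m_j}\in q^\Z$, a nontrivial multiplicative relation contradicting the hypothesis $\a_1^\Z\cdots\a_r^\Z\cap q^\Z=\{1\}$. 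The main obstacle is the second step: producing $\cL$ exactly via the multiplicative-independence input, and then matching the abstract basis of solutions furnished by Theorem \ref{thm:teogev} with the concrete basis $\{e_q(a_ix)\}$, so that the membership in $(x-\xi)K[[x-q\xi]]_q$ translates into the clean vanishing statements $e_q(a_i\xi)=0$ that the multiplicative-independence hypothesis can contradict.
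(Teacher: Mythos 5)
Your proof takes essentially the same route as the paper's: assume a nontrivial relation, form $F=\sum c_ie_q(a_ix)$, identify its minimal operator as having order exactly $r$ with $\{e_q(a_ix)\}$ as a solution basis, apply Theorem~\ref{thm:teogev} to force $e_q(a_i\xi)=0$ for all $i$, and contradict the multiplicative independence via the zero set $q^{\Z_{\geq1}}/(1-q)$ of the infinite product. The paper compresses the middle step into the bare citation of Theorem~\ref{thm:teogev}; you make explicit both why $F$ is global $q$-Gevrey of orders $(0,-1)$ (the Borel transform $\sum c_i/(1-a_ix)$ being rational) and why the solution space furnished by that theorem is the same $r$-dimensional space spanned by the $e_q(a_ix)$, so that vanishing propagates to each summand. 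That is exactly the content left implicit in the paper, so the two proofs coincide in substance.

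Two small imprecisions in your write-up, neither fatal: the $\sgq$-presentation of the minimal operator is not literally $\prod_i\bigl(\sgq-(1+(q-1)a_ix)\bigr)$ (those factors do not commute, and converting $\dq-a_i=\tfrac{1}{(q-1)x}\bigl(\sgq-(1+(q-1)a_ix)\bigr)$ leaves intermediate factors of $\tfrac1{(q-1)x}$); what is true and what you need is only that the operator has order $r$ with the $e_q(a_ix)$ as a solution basis, which the paper states in the $\dq$-form $(\dq-a_1)\circ\cdots\circ(\dq-a_r)$, where the $a_i$ are constants and the factors do commute. Second, the coefficients in the expansion of $e_q(a_ix)$ in the basis $\{T^q_n(x,\xi)\}$ lie in the $\wtilde q^{-1}$-adic completion $k((\wtilde q^{-1}))$ rather than in $K$ itself; the dimension-count argument goes through after extending scalars to the completion, but your phrasing ``an element of $K[[x-\xi]]_q$'' should be read with this caveat.
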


\begin{proof}
It is enough to notice that $e_q(a_1x),\dots,e_q(a_rx)$ is a basis of solutions of the
operator
$$
(\dq-a_1)\circ\dots\circ(\dq-a_r)\,.
$$
If there exist $\la_1,\dots,\la_r\in K$ such that
$\la_1 e_q(\a_1\xi)+\dots+\la_re_q(\a_r\xi)=0$, then $e_q(\a_i\xi)=0$ for any $i=1,\dots,r$,
because of Theorem \ref{thm:teogev}.
Since $e_q(x)$ satisfies the equation $y(qx)=(1+(q-1)x)e_q(x)$,
we deduce that $\xi\in\frac{q^{\Z_{\geq 1}}}{(1-q)\a_i}$, for any $i=1,\dots,r$.
The last assertion would imply that $\a_i\a_j^{-1}\in q^\Z$ for any pair of distinct $i,j$,
against the assumption.
\end{proof}

We can deduce by Theorem \ref{thm:teogev} an irrationality result
for all global $q$-Gevrey series $F(x)$ such that zero is not a slope of the
Newton Polygon at $\infty$ of the minimal $q$-difference operator that kills $F(x)$:

\begin{thm}\label{thm:irr}
Let $\ol{k(q)}$ be a fixed algebraic closure of $k(q)$
and $\wtilde K\subset\ol{k(q)}$ the maximal extension of $k(q)$ such that
the $q^{-1}$-adic norm of $k(q)$ extends uniquely to $\wtilde K$.
\par
Let $F(x)\in\wtilde K[[x]]\smallsetminus\wtilde K[x]$ be a global $q$-Gevrey series of
orders $(-s_1,-s_2)$,
with $(s_1,s_2)\in\cZ$, and
$\cL$ the minimal linear $q$-difference operator such that $\cL F(x)=0$.
We suppose that zero is not a slope of $\cL$ at $\infty$.
Then for all $\xi\in K^\ast$ the value $F(\xi)$ of the
$q^{-1}$-adic analytic entire function $F(x)$ is not an element
of $\wtilde K$ (but of its $\wtilde q^{-1}$-adic completion).
\end{thm}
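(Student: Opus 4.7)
The plan is to argue by contradiction. Suppose $F(\xi)=c\in\wtilde K$ for some $\xi\in K^\ast$, and set $G=F-c$. Adding a constant preserves the global $q$-Gevrey property with the same orders (only the coefficient $a_0$ is shifted, and constants are $G_q$-functions), and by the very construction of $\wtilde K$ the $\wtilde q^{-1}$-adic place is the unique place of $\wtilde K$ at which $|q|_v>1$. Hence $G(\xi)=0$ at every $v\in\cP_\infty$ with $|q|_v>1$, so the hypothesis of Theorem~\ref{thm:teogev} is satisfied by $G$. Let $\cL_G$ denote the minimal $q$-difference operator annihilating $G$.

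The slope hypothesis immediately gives $\cL(1)\neq 0$: if $1$ were a solution of $\cL$, then (being a polynomial, hence regular at infinity) it would contribute a zero slope at $\infty$ of $\cL$, against the assumption. Granted this, in the case $c\neq 0$ I would write down $\cL_G$ explicitly as $\cL_G=A\cdot\cL$, where
\[
A\;=\;\sgq\;-\;\frac{\sgq(\cL(1))}{\cL(1)}
\]
is the minimal first-order operator annihilating the nonzero rational function $\cL(1)$. A direct computation gives $A(\cL(G))=A(-c\,\cL(1))=0$, so $A\cdot\cL$ kills $G$; and a short dimension count, using $\ker(A\cdot\cL)=\ker\cL\oplus K\cdot 1$ (which relies on $1\notin\ker\cL$), shows that $A\cdot\cL$ already has the minimal degree $\deg_\sgq\cL+1$. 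The essential feature of this presentation is that $\cL_G(1)=A(\cL(1))=0$ by construction, so $1\in\ker\cL_G$; therefore $F=G+c\cdot 1\in\ker\cL_G$. Now Theorem~\ref{thm:teogev} applied to $G$ forces every element of $\ker\cL_G$ to lie in $(x-\xi)\wtilde K[[x-q\xi]]_q$, hence to vanish at $\xi$. In particular $F(\xi)=0$, contradicting $F(\xi)=c\neq 0$, and settling the case $c\neq 0$.

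The main obstacle is the remaining case $c=0$, i.e.\ $F(\xi)=0$: the factorization above degenerates ($\cL_G=\cL$) and one loses the extra solution $1$ that drove the contradiction. Here the plan is to apply Lemma~\ref{lemma:zero} iteratively, writing $F=(x-\xi)^nF_n$ for $n$ equal to the (finite) order of vanishing of the $\wtilde q^{-1}$-adic entire function $F$ at $\xi$; then $F_n\in\wtilde K[[x]]$ is again a global $q$-Gevrey series of orders $(-s_1,-s_2)$ with $F_n(\xi)\neq 0$, and its minimal operator still has no zero slope at infinity (the Newton polygon at infinity is preserved under multiplication by the polynomial $(x-\xi)^n$, and a right-divisor can only inherit a subset of the resulting slopes). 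One then tries to apply the previous paragraph with $F_n$ in place of $F$; the delicate point is to secure $F_n(\xi)\in\wtilde K$, which probably requires combining the Casoratian analysis of the proposition preceding Theorem~\ref{thm:irr} (which would force $q^m\xi$ to be a zero of the constant coefficient $a_0$ of $\cL$) with the $\wtilde q^{-1}$-adic analyticity of the Casorati determinant along the $q$-orbit of $\xi$, so as to exclude the remaining "bad" orbits where $F$ could \emph{a priori} vanish.
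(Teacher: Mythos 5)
Your strategy matches the paper's in the case $F(\xi)=c\neq 0$: pass to $G=F-c$, show the minimal operator $\cL_G$ of $G$ is the degree-one left extension $A\circ\cL$ of $\cL$ with $A$ the first-order operator killing $\cL(1)$, note that $\cL_G$ then also kills constants, and apply Theorem~\ref{thm:teogev} to $G$ to conclude that every solution of $\cL_G$ vanishes at $\xi$, a contradiction. Your route to the minimality of $A\circ\cL$ has a gap, though. You rely only on $1\notin\ker\cL$ (i.e.\ $\cL(1)\neq 0$), but this alone does not force $\deg\cL_G=\nu+1$. For instance, with $F=1/(1-x)$ and $\cL=(1-qx)\sgq-(1-x)$, one has $\cL(1)=(1-q)x\neq 0$ and yet $G=F-c$ is again a nonzero rational function, so $\deg\cL_G=1=\nu$; of course this $F$ violates the theorem's hypotheses, but it shows that $\cL(1)\neq 0$ is not doing the work you ask of it. What the dimension count actually needs is that $1$ is not a $K(x)$-linear combination of the $\sgq^iF$, equivalently that the trivial module $K(x)$ does not occur as a $q$-difference submodule of the cyclic module generated by $F$. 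That does follow from the slope hypothesis at $\infty$ (such a submodule would contribute the slope $0$ there), but it is a second, separate appeal to it, parallel to the paper's invocation of the uniqueness of the slope factorization to rule out the case $\mu=\deg\cL_G=\nu$ after its bound $\nu-1\leq\mu\leq\nu+1$.

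Your assessment of the remaining case $F(\xi)=0$ is accurate: it is genuinely unresolved. The paper's printed proof also only treats $c\neq 0$, and its closing clause ``hence $F(\xi)\neq 0$ is not in $K$'' asserts $F(\xi)\neq 0$ without supplying a reason. Your sketch via Lemma~\ref{lemma:zero} stops exactly where you say it does: after dividing by $(x-\xi)^n$, the value $F_n(\xi)$ lives a priori only in the $\wtilde q^{-1}$-adic completion and not in $\wtilde K$, so the $c\neq 0$ argument cannot be re-entered. You are right to flag this rather than quietly assert the conclusion.
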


Before proving the theorem, we give an example, which illustrates the proof:

\begin{exa}
Let us consider the $q$-analogue of a Bessel series
$$
B_q(x)=\sum_{n\geq 0}{x^n\over{[n]_q^!}^2}\,.
$$
The series $B_q(x)$ is solution of the
linear $q$-difference operator $(x\dq)^2-x$
that can be written also in the form:
$$
\cL=\sgq^2-2\sgq+(1-(q-1)^2x)\,.
$$
There is a unique factorization of a linear $q$-difference operator
linked to the slopes of its Newton Polygon (\cf \cite{sauloyfiltration}):
we deduce that $\cL$ is the minimal
$q$-difference operator killing $B_q(x)$ from the fact that the only slope of the
Newton-Polygon of $\cL$ at $\infty$ is $-1/2$.
We conclude that $B_q(\xi)=0$ for all $v$ such that $|q|_v>1$, with
$\xi\in{\mathbb P}^1(K)$,  implies $\xi=q^m/(q-1)^2$ for some integer $m\geq 2$.
\par
Let $K=k(\wtilde q)$, with $\wtilde q^r=q$ for some positive integer $r$.
In this case the $\wtilde q^{-1}$-adic norm is the only one such that $|q|_v>1$.
For any $c\in K$ we have:
$$
(q\sgq-1)\circ\cL(B_q(x)+c)=0\,.
$$
One notices that the slopes of the Newton Polygon of $(q\sgq-1)\circ\cL$ at $\infty$ are
$\{0,-1/2\}$, therefore we deduce from the uniqueness of the factorization that
$(q\sgq-1)\circ\cL$ is the minimal $q$-difference operator
killing $B_q(x)+c$.
Since constants are solutions of $(q\sgq-1)\circ\cL$,
Theorem \ref{thm:teogev} implies that no solution of $(q\sgq-1)\circ\cL$
can have a zero
at any point $\xi\in K^\ast$ as $\wtilde q^{-1}$-adic
holomorphic functions.
This means that the function $B_q(x)+c$ cannot have a zero as
a $\wtilde q^{-1}$-adic analytic function at
$\xi\in  K^\ast$, which means that
$B_q(x)$ takes values in $k((\wtilde q^{-1}))\smallsetminus k(\wtilde q)$ at each
$\xi\in K^\ast$.
\end{exa}

\begin{proof}[Proof of Theorem \ref{thm:irr}]
Let $c\in\wtilde K$, $c\neq 0$, $G(x)=F(x)+c$,
$\cL=\sum_{i=1}^\nu a_i(x)\dq^i\in\wtilde K[x,\dq]$ (resp.
$\cN=\sum_{j=1}^\mu b_j(x)\dq^j\in\wtilde K[x,\dq]$) be the minimal $q$-difference
operator  killing $F(x)$ (resp. $G(x)$).
Of course we may assume that $a_i(x),b_j(x)\in\wtilde K(x)$ and $a_\nu(x)=b_\mu(x)=1$,
and that everything is defined over a finite extension $K\subset\wtilde K$ of $k(q)$.
\par
Since:
$$
\l(\dq-{\dq(a_0)(x)\over a_0(x)}\r)\circ\cL(G(x))=0
\hbox{\ and }
\l(\dq-{\dq(b_0)(x)\over b_0(x)}\r)\circ\cN(F(x))=0\,,
$$
we must have $\nu-1\leq \mu\leq \nu+1$. Let us suppose first $\nu=\mu$.
Then
$$
\l(\dq-{\dq(a_0)(x)\over a_0(x)}\r)\circ\cL
=\l(\dq-{\dq(b_0)(x)\over b_0(x)}\r)\circ\cN
$$
since they have the same set of solutions and they are both monic operators.
By hypothesis, zero is not a slope of the Newton Polygon of $\cL$ at $\infty$, while
$\l(\dq-{\dq(a_0)(x)\over a_0(x)}\r)$ has only the zero slope at $\infty$: we
conclude by the uniqueness of the factorization that $\cL=\cN$.
We remark that the equality $\cL=\cN$ implies that constants are solutions of $\cL$ and that
$\cL$ has a zero slope at $\infty$, hence we obtain a contradiction.
So either $\mu=\nu-1$ or $\mu=\nu+1$.
If $\mu=\nu-1$, then
$$
\cL=\l(\dq-{\dq(b_0)(x)\over b_0(x)}\r)\circ\cN
$$
since both $\cL$ and $\cN$ are monic. Once again, constants are solution of $\cL$
and this is a contradiction. Finally, we have necessarily $\mu=\nu+1$ and
$$
\cN=\l(\dq-{\dq(b_0)(x)\over b_0(x)}\r)\circ\cL\,.
$$
Let us suppose that there exists $\xi\in K^\ast$,
such that $F(x)$
takes a value in $K$ at $\xi$, as
$\wtilde q^{-1}$-adic analytic function. Then all the solutions of $\cN$ would have a zero at
$\xi$ against the fact that the constants are solutions of $\cN$, hence $F(\xi)\neq 0$
is not in $K$.
\end{proof}


\providecommand{\bysame}{\leavevmode\hbox to3em{\hrulefill}\thinspace}
\providecommand{\MR}{\relax\ifhmode\unskip\space\fi MR }
\providecommand{\MRhref}[2]{%
  \href{http://www.ams.org/mathscinet-getitem?mr=#1}{#2}
}
\providecommand{\href}[2]{#2}

\end{document}